\documentclass{article}
\usepackage[utf8]{inputenc}

\providecommand{\main}{.}

\usepackage{a4wide}
\usepackage[ruled,vlined]{algorithm2e}
\usepackage{lmodern}
\usepackage[utf8]{inputenc}
\usepackage[T1]{fontenc}
\usepackage{latexsym,exscale,amssymb,amsmath}
\usepackage{graphicx}
\usepackage[nointegrals]{wasysym}
\usepackage{eurosym}
\usepackage{amsthm}
\usepackage{hyperref}
\usepackage{caption}
\usepackage{svg}
\usepackage{blindtext}
\usepackage{bbm}
\usepackage{subcaption}
\usepackage{tikz}
\usepackage[nomain]{glossaries}
\usepackage{authblk}

\usetikzlibrary{shapes.geometric, arrows, arrows.meta}
\tikzstyle{process} = [rectangle, 
minimum width=2.5cm, 
minimum height=1cm, 
text centered, 
text width=3cm, 
rounded corners=0.5cm,
draw=black, 
fill=lightgray!50]

\tikzstyle{arrow} = [-{Implies},double, line width=.4mm]
\usepackage{subfiles} 

\graphicspath{{\main/images/}{images/}}


\DeclareMathOperator*{\argmin}{argmin}
\DeclareMathOperator*{\cov}{Cov}
\DeclareMathOperator*{\supp}{supp}
\DeclareMathOperator*{\diam}{diam}
\DeclareMathOperator{\dir}{dir}

\DeclareMathOperator*{\TV}{TV}

\DeclareMathOperator{\arcosh}{arcosh}
\DeclareMathOperator*{\cl}{cl}


\newcommand{\cO}{\mathcal{O}}
\newcommand{\cP}{\mathcal{P}}
\newcommand{\cQ}{\mathcal{Q}}
\newcommand{\cT}{\mathcal{T}}

\newcommand{\fb}{\mathfrak{b}}

\newcommand{\prb}{P}
\newcommand{\prbalt}{Q}
\newcommand{\NN}{{\mathbb N}}
\newcommand{\RR}{{\mathbb R}}
\newcommand{\EE}{{\mathbb E}}
\newcommand{\mean}{\mu}
\newcommand{\meanalt}{\nu}
\newcommand{\smean}{\hat{\mean}_n}
\newcommand{\diff}{\mathrm{d}}
\newcommand{\wst}[2][1]{\mathcal{P}^{#1}\left(#2\right)}
\newcommand{\Qkap}[1][1]{\mathcal{Q}^{#1}_{x, \kappa}}
\newcommand{\Qkapn}[1][1]{\mathcal{Q}^{#1}_{x_0, \kappa}}
\newcommand{\Bkapf}{B_{R_\kappa}(x)}

\newcommand{\Bkap}{B_{R_\kappa/2}(x)}
\newcommand{\Bkapn}{B_{R_\kappa/2}(x_0)}
\newcommand{\iid}{{\stackrel{i.i.d.}{\sim}}}
\newcommand{\cat}{CAT$(\kappa)$}
\newcommand{\catzero}{CAT$(0)$}


%
\newtheorem{theorem}{Theorem}[section]
\newtheorem{proposition}[theorem]{Proposition}
\newtheorem{lemma}[theorem]{Lemma}
\newtheorem{corollary}[theorem]{Corollary}
\newtheorem{setting}[]{Scenario}
\newtheorem{conjecture}[theorem]{Conjecture}

\theoremstyle{definition}
\newtheorem{definition}[theorem]{Definition}

\theoremstyle{remark}
\newtheorem{remark}[theorem]{Remark}
\newtheorem{example}[theorem]{Example}


\usepackage[backend=biber, style = alphabetic]{biblatex}
\addbibresource{references.bib}

\begin{document}
\title{
	Sticky Flavors
	\thanks{Supported by DFG GK 2088 and DFG HU 1575/7.}}
\author{Lars Lammers, Do Tran Van and Stephan F. Huckemann }

\affil{Felix-Bernstein-Institute for Mathematical Statistics
	in the Biosciences, University of G\"ottingen,
	Goldschmidtstrasse 7, 37077 G\"ottingen, Germany}

\maketitle              

\begin{abstract}
	The Fr\'echet mean, a generalization to a metric space of the expectation of a random variable in a vector space, can exhibit unexpected behavior for a wide class of random variables. For instance, it can stick to a point (more generally to a closed set) under resampling: sample stickiness. It can stick to a point for topologically nearby distributions: topological stickiness, such as total variation or Wasserstein stickiness. It can stick to a point for slight but arbitrary perturbations: perturbation stickiness. Here, we explore these and various other flavors of stickiness and their relationship in varying scenarios, for instance on \cat~spaces, $\kappa\in \RR$. Interestingly, modulation stickiness
	(faster asymptotic rate than $\sqrt{n}$)
	and directional stickiness (a generalization of moment stickiness from the literature) allow for the development of new statistical methods building on an asymptotic fluctuation, where, due to stickiness, the mean itself features no asymptotic fluctuation. Also, we rule out sticky flavors on manifolds in scenarios with curvature bounds.

\end{abstract}
\hspace{10pt}

\tableofcontents

\section{Introduction}

For this entire paper, let $(M,d)$ be a complete and separable metric space, $\cP(M)$ denotes the space of Borel probability distributions on $M$ and for $q\geq 1$ consider
\begin{align*}
	\wst[q]{M} :=  \Big \{ \prb \in \mathcal{P}(M) \ \Big \vert \ \exists  x \in M: \  \int_M d^q(x, z) \ \diff \prb(z) < \infty \Big \}\,.
\end{align*}
This gives rise to the \emph{Fr\'echet function}
\begin{align*}
	F_\prb(x) := \frac{1}{2} \int_M d^2(x, y) \  \diff \prb(y), \quad x \in M
\end{align*}
for $\prb\in \wst[2]{M}$ and  its \emph{Fr\'echet mean} set
\begin{align*}
	\mathfrak{b}(\prb) := \argmin_{x \in M} F_\prb(x)\,.
\end{align*}
Due to an observation noted in \cite[p. 33]{Sturm}, detailed in Remark \ref{rmk:Frechet-diff}, since for $\prb\in \wst[2]{P}$, every minimizer of $x\mapsto F_\prb(x)$ is also a minimizer of $x \mapsto F_\prb(x)-F_\prb(y)$ for arbitrary fixed $y\in M$, this difference can be given a meaning, even if the two terms diverge, due to the triangle inequality. Hence, we consider Fr\'echet means for all $\prb \in \wst[1]{M}$.

Another observation is that the Fr\'echet mean set can \emph{stick} to a closed set $S\subset M$, of lower dimension, on spaces with unbounded negative Alexandrov curvature, like \emph{open books} \cite{HHL+13}, BHV spaces \cite{BHV02}, \cite{bhvclt1,bhvclt2} 
and the \emph{kale} \cite{HMMN15}. A simple illustration of this phenomenon gives Example \ref{ex:kspider}. On the kale stickiness for $S$ being a single point, was phrased in terms of \emph{moment}, \emph{Wasserstein}, \emph{perturbation} and \emph{sample stickiness}, see Example \ref{ex:kale},  and there, \cite{HMMN15} showed that these four \emph{flavors} of stickiness are equivalent. Recently, equivalence in a slightly more general scenario was also shown by \cite{bhvsticky} for BHV spaces.

In this paper, we carefully introduce the above-mentioned \emph{flavors of stickiness} in Section \ref{scn:3-flavors}, and more flavors in Sections \ref{scn:dir-sticky} and \ref{scn:mod-sticky-asymp}, and relate them to one another in specific \emph{scenarios}, involving, among others stickiness with respect to certain families $\cQ \subseteq \wst[q]M$.

Notably, within our scenarios, in Section \ref{scn:rule-out} we can rule out stickiness on manifolds.

Also, we derive in Section \ref{scn:CLT-dir} a central limit theorem for the \emph{directions of stickiness}, which arise from \emph{directional stickiness}, which is a generalization of \emph{moment stickiness} from \cite{HHL+13,HMMN15}, cf. Remark \ref{remark:folded-moments-directional-derivatives}, and we provide asymptotic bounds for \emph{moment modulations} in case of \emph{modulation stickiness} in Section \ref{scn:mod-sticky-asymp}. 

This contribution contains several examples and counterexamples for easy tasting our sticky flavors.

We conclude this introduction by giving first, an overview of \emph{sticky flavors} and thirdly, their rather involved relationship in, secondly, varying \emph{scenarios}.

\subsection{Flavors of Stickiness}
For a measurable set $A\subseteq M$ and $q\geq 1$ let
$$ \cQ^q_A := \{\prb \in \wst[q]{M}: \supp(\prb) \subseteq A\}\,.$$
Moreover, for a random variable $X$ with probability distribution $\prb \in \cQ^q_A$,  $\prb_n \in \cQ^q_A$ denotes the probability distribution of a sample $X_1,\ldots,X_n \iid X$, $n\in \NN$. Recalling that $S \subset M$ is closed, we consider the following sticky flavors.
\begin{description}
	\item[perturbation sticky]  (P): for every $y\in A$ there is $t_y >0$ such that   $\mathfrak{b}((1-t)P+t\delta_y)\subseteq S$ for all $0\leq t \leq t_y$;
	\item[sample sticky] (S): there is a random integer $N$ such that $\mathfrak{b}(P_n) \subseteq S$ a.s. for all $n\geq N$;
	\item[topologically sticky] (T): there is a neighborhood $U$ of $\prb$ in $\cQ^q_A$  such that $\mathfrak{b}(\prbalt) \subseteq S$ for all $\prbalt\in U$; typical topologies for $\cQ^q_A$ are Wasserstein topologies leading to \emph{Wasserstein sticky} ($W_q$), or topologies induced by $f$-divergences leading to \emph{$f$-divergens sticky} ($D_f$) such as \emph{total variation sticky} (TV) or \emph{Kullback-Leibler sticky} (KL), say, detailed in Section \ref{scn:3-flavors};
	\item[directional sticky] ($\nabla$): if $S=\{\mu\}$ for some $\mu \in M$ and $\nabla_\sigma F_P(\mu)>0$ for all $\sigma\in \Sigma_\mu$ with  the \emph{space of directions} $\Sigma_\mu$ at $\mu$, detailed in Section \ref{scn:dir-sticky};
	\item[modulation sticky]  ($\mathfrak{m}_q$): if  $S=\{\mu\}$ for some $\mu \in M$ and ${\sqrt{n}}\,^q\EE[d(\mu,\hat \mu_n)^2] \to 0$ as $n\to \infty$ where  $\hat\mu_n$ is a measurable selection of $\mathfrak{b}(P_n)$, Section \ref{scn:mod-sticky} details that we actually consider this on the \emph{tangent cone} $T_\mu$.
\end{description}

\subsection{Four Scenarios}

For $\prb \in \cQ^q_A$ \emph{sticking} within $\cQ^q_A$ to a closed set $S\subset M$, consider as follows.
\begin{description}
	\item[Scenario I:]  $(M,d)$ is a complete metric space;
	\item[Scenario II:] in addition to Scenario I, $S=\{\mu\}$ for some $\mu \in M$, $(M,d)$ is CAT($\kappa$) for some  $\kappa \in \RR$ (i.e. it features \emph{Alexandrov curvatures} bounded from above by $\kappa $) and $A$ is a \emph{geodesic half ball} $A=B_{R_\kappa/2}^{x_0}$ containing $\mu$ and centered at some $x_0\in M$ (since $R_\kappa = \infty$ for $\kappa \leq 0$, see Equation (\ref{eq:R_kappa}), in that case $A=M$ and $\cQ^q_A = \wst[q]{M}$), all of this is detailed in Sections \ref{scn:CAT} and \ref{scn:dir-sticky};
	\item[Scenario III:] in addition to Scenario II $(M,d)$ is geodesically complete and locally compact, see Section \ref{scn:mod-sticky};
	\item[Scenario IV:]  in addition to Scenario III at least second moments exist and $\Sigma_\mu$ has a polynomial rate \emph{covering number} (which is the case, for instance, if it is finite dimensional), all of this is detailed in Section \ref{scn:mod-sticky-asymp}.
\end{description}
We also require a
\begin{description}
	\item[pull condition] (pc): the \emph{pull} $M\to \RR, z\mapsto \phi_{\sigma,\mu}(z)$ does not vanish a.s. for any direction $\sigma \in \Sigma_\mu$, in the context of \emph{directional stickiness},  detailed in Section \ref{scn:pull}, and a
	\item[$f$-condition] (fc): $f$ is twice differentiable and $f''(x) > 0$ for all $x\neq 1$ in the context of \emph{$f$-divergence stickiness}, detailed in Section \ref{scn:3-flavors}.
\end{description}

\subsection{Relating the Flavors to One Another}

The rather involved relationship between sticky flavors for varying scenarios is illustrated in the following Figure \ref{fig:stickychart}.

\begin{figure}[h!]
	\centering
	\begin{tikzpicture}[node distance=2cm]
		\node (w1) [process, yshift = 1.5cm] {$W_1$-sticky};
		\node (df) [process, left of=w1, xshift=-3.7cm, yshift= -1.75cm] {$D_f$-sticky};
		\node (tv) [process, left of=w1, xshift=-3.7cm] {$\TV$-sticky};
		\node (wq) [process, above of=w1,yshift = 1.5cm] {$W_q$-sticky};
		\node (sample) [process, right of=w1, xshift=3.7cm] {sample sticky};
		\node (dir) [process, below of=w1, yshift=-1.5cm] {directionally sticky};
		\node (pert) [process, left of=dir, xshift=-3.7cm] {perturbation sticky};
		\node (mod) [process, right of=dir, xshift=3.7cm] {$q$-modulation stickiness};

		\draw [arrow, black] (w1) to node[anchor= north, xshift=.0cm,align=center] {*}(pert);
		\draw [arrow, black] (w1) to (wq);
		\draw [arrow, black] (wq) to (sample);
		\draw [arrow, black] (wq.west) |-([shift={(-65mm,0)}]wq.west)-- ([shift={(-8mm,0mm)}]pert.west)--(pert.west);
		\draw [arrow, black] (df) to node[anchor= west, xshift=.0cm,align=center] {*}(pert);
		\draw [arrow, black] (w1) to node[anchor= south, xshift=.0cm,align=center] {*}(sample);
		\draw [arrow, black] (w1) to node[anchor=south,xshift=.0cm,align=center] {$A$ bounded}(tv);
		\draw [arrow, black] (tv) to node[anchor= east, xshift=.0cm,align=center] {(fc)} (df);
		\draw [arrow, teal] (sample) to node[anchor=south east,yshift=-0.1cm,align=center] {(pc)}node[anchor= north west, xshift=.0cm,align=center] {\textcolor{black}{*}}(dir);
		\draw [arrow, teal] (pert) to (dir);
		\draw [arrow, blue!70] (dir) to (w1);
		\draw [arrow, bend angle = 8, bend right, magenta] (dir) to node[anchor=north,yshift=0cm,align=center] {$\prb \in \wst[2 \lor (q + \epsilon)]{M}$} (mod);
		\draw [arrow, bend angle = 8, bend right, magenta] (mod) to node[anchor=south,yshift=0cm,align=center] {(PC),$\prb \in \wst[2]{M}$}(dir);

		\path ([xshift=-3cm,yshift=0cm]current bounding box.north east)
		node[matrix,anchor=north west,cells={nodes={font=\sffamily,anchor=west}},
			draw,thick,inner sep=1ex]{
			\draw [arrow, black](0,0) -- (0.6,0); & \node{Scenario \ref{set:1}};\\
			\draw [arrow, teal](0,0) -- ++ (0.6,0); & \node{Scenario \ref{set:2}};\\
			\draw [arrow, blue!70](0,0) -- ++ (0.6,0); & \node{Scenario \ref{set:3}};\\
			\draw [arrow, magenta](0,0) -- ++ (0.6,0); & \node{Scenario \ref{set:4}};\\
		};
	\end{tikzpicture}
	\caption{\it Relationships between the various flavors of stickiness. For every arrow with an asterisk (*) we give a counterexample for either the converse direction or
		in a different scenario.}
	\label{fig:stickychart}
\end{figure}
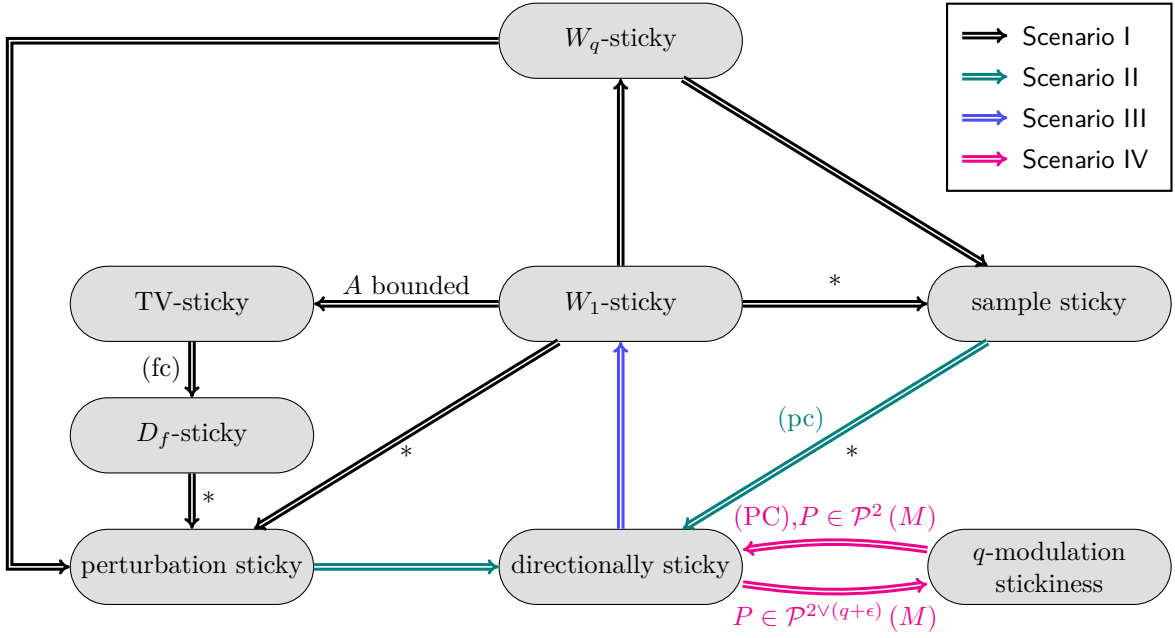

\subsection{Notation and a Remark}

For $x \in M$ and $r >0$,
$$B_r(x) := \{y\in M: d(x,y) < r\}\,,$$
is the open ball around $x$ of radius $r$, where for $r=\infty$ set $B_r(x)=M$.

The support of a probability distribution $\prb \in \cP (M)$ is
given by
$$ \supp(\prb) = \cap_{A\subseteq M~ \mbox{closed}, \prb(A) = 1} A$$

For $x\in M$, the point measure at $x$ is $\delta_x \in \cP(M)$ and $$\prb_n = \frac{1}{n}
	\sum_{j=1}^n \delta_{X_j}$$
is the \emph{empirical measure} of a sample $X_1,\ldots X_n  \iid \prb \in  \cP(M)$, $n\in  \NN$.

Beginning with Section \ref{scn:dir-sticky}, $(M,d)$ is a \cat~space with $\kappa \in \RR$ and for $q \geq 1$, $x \in M$,
\begin{align*}
	\Qkap[q] = \Big \{ \prb \in \wst[q]{M} \ : \ \supp(\prb) \subset \Bkap \Big \},
\end{align*}
is the space of probability measures with finite $q$-th moments, supported by an open geodesic half ball, which is all of $M$ if $\kappa \leq 0$.

The metric completions of the space of directions $\Sigma_x$ and of the tangent cone $T_x$ at $x\in M$ are $\cl(\Sigma_x)$ and $\cl(T_x)$, respectively.

\begin{remark}
	\label{rmk:Frechet-diff}

	Indeed, as \cite[p. 33]{Sturm} noted, it suffices to consider $\prb\in \wst{M}$, since for $\prb\in \wst[2]{M}$, minimizers of $F_\prb$ agree with minimizers of the
	\emph{Fr\'echet difference} given, with arbitrary $y\in M$,  by
	\begin{align*}
		F_\prb^y(x) = \frac{1}{2} \int_M \left(d^2(x,z) - d^2(y,z)\right) \
		\diff \prb(z), \quad x \in M\,,
	\end{align*}
	which is, due to the triangle inequality
	\begin{align*}
		\lvert d^2(x,z) - d^2(y,z) \rvert & = \lvert d(x,z) - d(y,z)\rvert \cdot \big(d(x,z) + d(y,z)\big) \\
		                                  & \leq  d(x,y) \cdot \big(d(x,z) + d(y,z)\big),
	\end{align*}
	already finite for $\prb\in \wst{M}$.
\end{remark}

\section{Topological, Perturbation and Sample Stickiness}  
\label{scn:3-flavors}

Here we introduce the three different basic flavors of stickiness, where the first one forks into several subflavors, and we study some immediate relationships between them in the very general \emph{Scenario I}. In Sections \ref{scn:dir-sticky} and \ref{scn:mod-sticky-asymp}, we will add a fourth (directional) and a fifth (modulation) flavor, respectively, that require more structure of the metric space, namely a log map to its tangent cones.

In particular, the tangent cone is a \emph{Euclidean cone} as introduced below and such cones play not only a central role in mathematical reasoning throughout this paper, they were also the first spaces on which stickiness had been observed. We start with this.

%

\subsection{Euclidean Cones and Data Spaces}

\begin{definition}[Euclidean cones]
	\label{Def:Cones}

	The \emph{Euclidean cone} over a metric space $(N,d_N)$ is the quotient
	$$C_0 N = N \times [0, \infty) / \sim$$ with the equivalence relation
	$(\eta,s) \sim (\theta,t)$ for $\theta, \eta \in N$, $s,t \geq 0$, if and
	only if $(\eta,s) = (\theta,t)$ or $s = 0 =t$. We denote the class of
	$(\eta,0)$ by $\mathcal{O}$ for $\eta \in N$ and refer to it as the
	\emph{cone point}.

	The cone distance $d_0$ between any two points $x = (\eta, s) \in C_0 N$
	and $y = (\theta, t) \in C_0 N$ is given by
	\begin{align*}
		d_0(x,y) = \Big( s^2 + t^2 -2 s t \cos\big(d_\pi(\eta,\theta) \big)
		\Big)^{\frac{1}{2}},
	\end{align*}
	where $d_\pi(\eta, \theta) = \min\{d_N(\eta, \theta), \pi\}$.
\end{definition}

A trivial example is a Euclidean space viewed as the Euclidean cone over a
unit hypersphere. In general, $(C_0N,d_0)$ is a metric space as the proposition below
teaches and the cone is called \emph{Euclidean} since for $(\eta, s),(\theta,t)
	\in C_0N$ with $d_N(\eta, \theta)\leq \pi$ their $d_0$ distance is Euclidean,
if $(\eta, s),(\theta,t)$ are viewed as polar coordinates.

\begin{proposition}

	Let $(N,d)_N$ be a metric space. The Euclidean cone equipped with the distance $d_0$ from
	Definition \ref{Def:Cones} yields a metric space $(C_0 N,d_0)$.

\end{proposition}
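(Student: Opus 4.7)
The plan is to verify in turn symmetry, non-negativity, definiteness (modulo the equivalence relation), and the triangle inequality. Symmetry is immediate from $d_\pi(\eta,\theta)=d_\pi(\theta,\eta)$, and non-negativity follows by rewriting $d_0(x,y)^2 \geq (s-t)^2$ via $\cos(d_\pi)\leq 1$. Definiteness then uses that $d_0(x,y)=0$ forces $s=t$, and the radius-zero case matches the cone-point identification $(\eta,0)\sim(\theta,0)$ while the radius-positive case forces $\cos(d_\pi(\eta,\theta))=1$, i.e., $\eta=\theta$. Well-definedness on the quotient is also routine, since the formula depends on the $N$-coordinates only through $d_\pi$, and collapses to just $t$ when one argument is the cone point.

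The main obstacle is the triangle inequality. As a preliminary, I would verify that $d_\pi$ itself satisfies the triangle inequality on $N$, which follows from the triangle inequality for $d_N$ by a short case split on whether $d_N(\eta_1,\eta_2)+d_N(\eta_2,\eta_3)$ is below or above $\pi$. With that in hand, fix $x_i=(\eta_i,s_i)$, $i=1,2,3$, and set $\alpha_{ij}:=d_\pi(\eta_i,\eta_j)\in[0,\pi]$. The strategy is to build a \emph{comparison triangle} $P_1,P_2,P_3\in\RR^2$ whose side lengths bound the cone distances in the right direction, reducing to the Euclidean triangle inequality applied to the $P_i$.

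The natural placement is polar: put $P_2$ at angle $0$, $P_1$ at angle $\alpha_{12}$, $P_3$ at angle $-\alpha_{23}$, each at radius $s_i$. When $\alpha_{12}+\alpha_{23}\leq\pi$, the law of cosines gives $\|P_1-P_2\|=d_0(x_1,x_2)$ and $\|P_2-P_3\|=d_0(x_2,x_3)$ exactly, while the angular separation of $P_1$ and $P_3$ is $\alpha_{12}+\alpha_{23}\in[0,\pi]$, and monotonicity of $\cos$ on $[0,\pi]$ combined with $\alpha_{13}\leq\alpha_{12}+\alpha_{23}$ yields $\|P_1-P_3\|\geq d_0(x_1,x_3)$. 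The Euclidean triangle inequality in $\RR^2$ then closes this case.

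The delicate case is $\alpha_{12}+\alpha_{23}>\pi$, where the naive placement has angular separation exceeding $\pi$ and the monotonicity argument breaks. Here I would switch to the collinear-through-origin layout $P_1=(s_1,0)$, $P_3=(-s_3,0)$, $P_2=s_2(\cos\alpha_{12},\sin\alpha_{12})$. With this choice, $\|P_1-P_2\|=d_0(x_1,x_2)$ exactly, $\|P_1-P_3\|=s_1+s_3\geq d_0(x_1,x_3)$ trivially (since $\cos\alpha_{13}\geq -1$), and the angular separation of $P_2$ from $P_3$ equals $\pi-\alpha_{12}\leq\alpha_{23}$, which is precisely where the case hypothesis $\alpha_{12}+\alpha_{23}>\pi$ is used; by monotonicity of $\cos$ this gives $\|P_2-P_3\|\leq d_0(x_2,x_3)$. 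Euclidean triangle inequality again yields the bound. The core insight is that case two forces a qualitatively different comparison configuration (with $P_1,P_3$ on opposite sides of the origin) in order to keep all planar angular separations within $[0,\pi]$ while still dominating the cone distances correctly.
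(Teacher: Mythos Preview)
Your argument is correct. The paper itself does not prove this statement; it simply cites Proposition I.5.9 and II.3.14 of Bridson--Haefliger. Your proof is essentially the standard one found there: reduce the triangle inequality to the Euclidean triangle inequality via a planar comparison configuration, splitting on whether the two adjacent angular distances sum to at most $\pi$ (so the three points embed isometrically as a planar triangle up to the $13$-side, which can only be longer) or exceed $\pi$ (so one places $P_1,P_3$ antipodally through the origin and checks the $23$-side only shrinks). The only cosmetic omission is the degenerate case where some $s_i=0$; there the cone-point representative has no well-defined $\eta_i$, but since the $d_0$-formula is independent of $\eta_i$ when $s_i=0$, one may assign $\eta_i$ arbitrarily and the comparison still goes through (or simply observe $d_0(x_1,x_3)\le s_1+s_3$ directly).
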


\begin{proof}
	Proposition I.5.9 and II.3.14 in \cite{BH11}.
\end{proof}

\begin{example}[BHV tree spaces \cite{BHV02}]
	\label{ex:BHV}
	The \emph{Billera-Holmes-Vogtmann tree spaces} of phylogenetic trees can
	be constructed as the cone over certain simplicial complexes, referred to
	as \emph{links}, of which the faces correspond to different tree topologies
	(Section 1 and 4.1 in \cite*{BHV02}). The link of trees with four leaves
	is the well known \emph{Peterson graph} and the resulting tree space $\mathcal{T}_4$
	is then Euclidean cone over the Peterson graph, see Figure \ref{fig:BHVlink}. There, in the left panel, each edge is
	assigned length $\pi / 2$ and corresponds to a binary tree topology, whereas the
	nodes correspond to trees with a single interior edge.
	\begin{figure}[!h]
		\centering
		\subfloat[{\it The Peterson graph.}]{
			\includegraphics*[width=5cm, height=5cm]{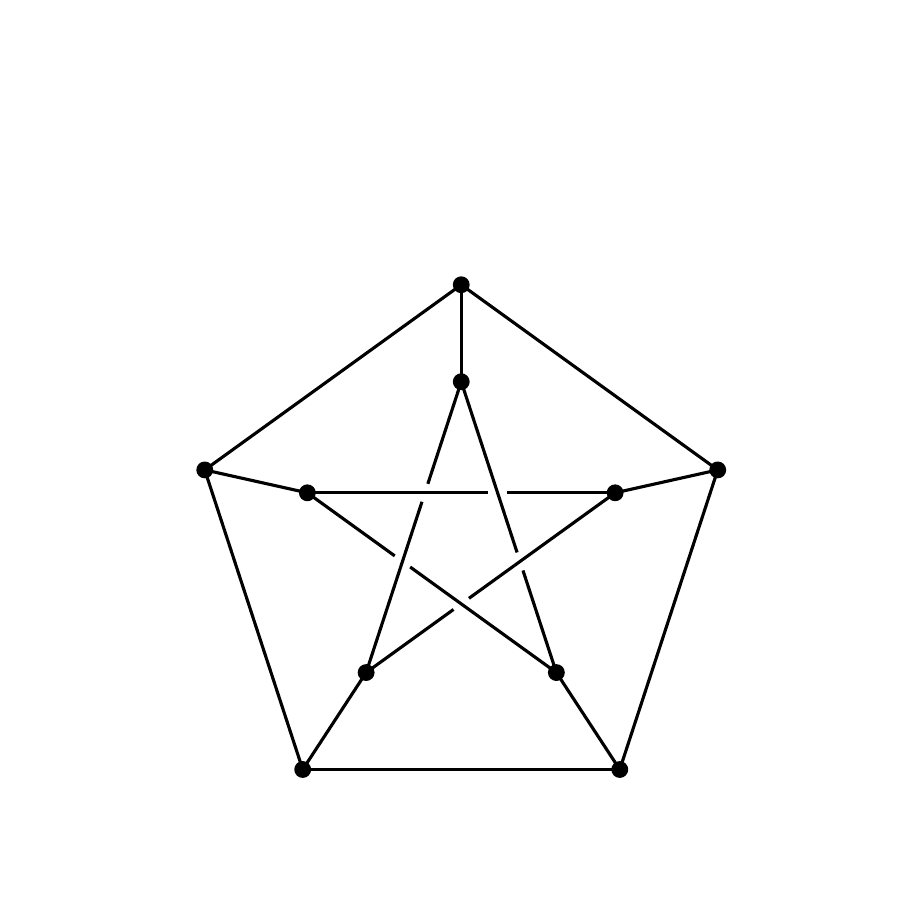}}
		\subfloat[{\it Three maximal orthants of $\mathcal{T}_4$ (gray) with corresponding part of the  link (black)}]{
			\includegraphics*[width=4cm, height=4cm]{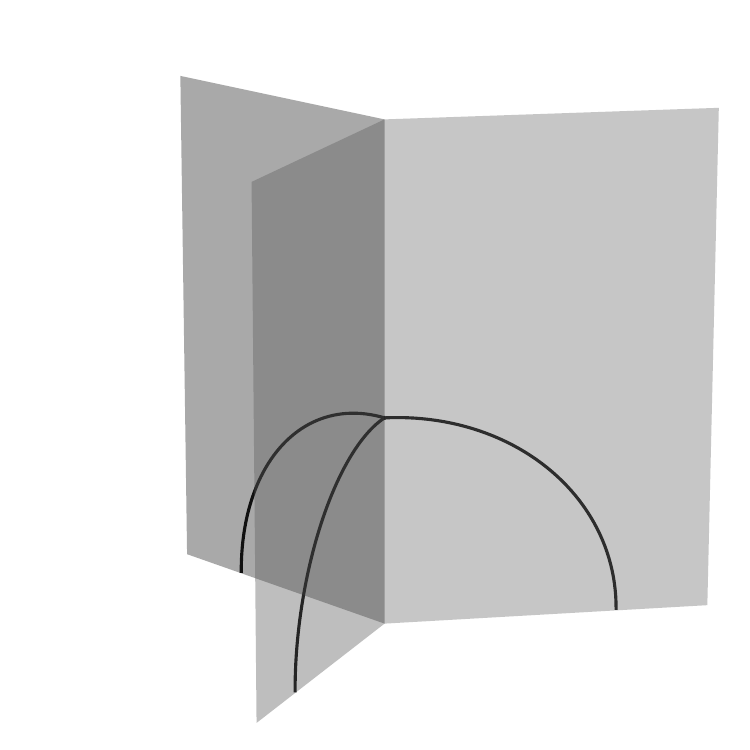}}
		\caption{\it The BHV tree space $\mathcal{T}_4$ is the Euclidean cone over
			the Peterson graph, where each edge has length $\pi/2$.}
		\label{fig:BHVlink}
	\end{figure}
\end{example}

\begin{example}[Stickiness on the $K$-spider]
	\label{ex:kspider}

	Consider the space obtained by gluing together $K \geq 3$
	half-lines at their endpoints, i.e. the Euclidean cone
	$$\mathfrak{A}_K := C_0 N = \{
		1,2,\ldots K\} \times [0,\infty)/\sim $$
	over the space $N= \{
		1,2,\ldots K\}$ with the trivial metric $d_N(i,i) = 0$ and $d_N(i,j) =
		\pi$ for $1 \leq i \neq j \leq K$, yielding the  cone's metric
	\begin{align*}
		d_0(i_1,t_1), (i_2,t_2)) =
		\begin{cases}
			\lvert t_1 - t_2\rvert & \quad \text{if } i_1 = i_2, \\
			t_1 + t_2              & \quad \text{else.}          \\
		\end{cases}
	\end{align*}
	This space is also called a $K$-spider and in case of $K=3$ it coincides with the BHV space  $\cT_3$, cf. Example \ref{ex:BHV}.

	Figure \ref{fig:kspidersticky} shows an example of stickiness on the
	3-spider: The Fr\'echet mean of three equally weighted point masses at same distance to
	the cone point $\cO$ \emph{sticks} to $\cO$, if one of the point masses is perturbed.

	\begin{figure}[!h]
		\begin{subfigure}{0.5\textwidth}
			\includegraphics[scale=.7]{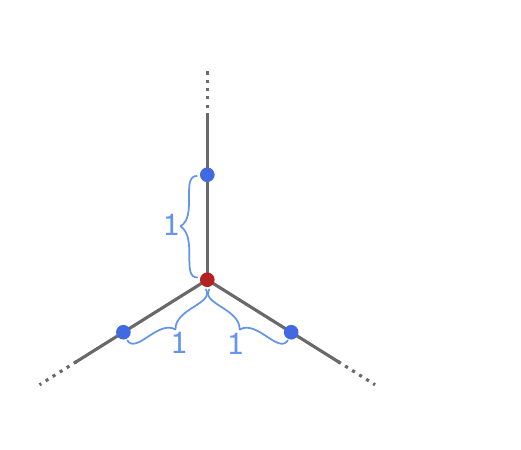}
		\end{subfigure}
		\begin{subfigure}{0.5\textwidth}
			\includegraphics[scale=.7]{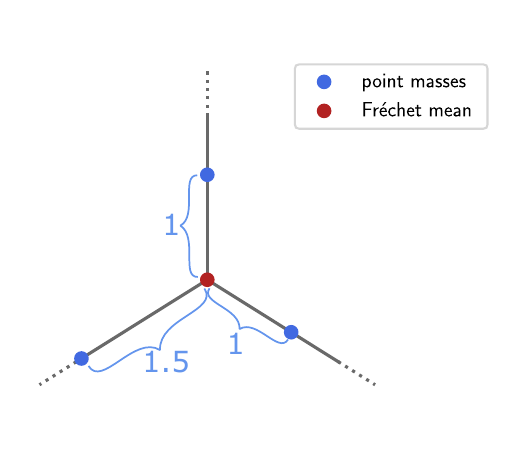}
		\end{subfigure}
		\caption{\it Even though one of three equally weighted point masses from the left panel is perturbed in the right panel, their Fr\'echet mean
			remains at the cone point.} 
		\label{fig:kspidersticky}
	\end{figure}
\end{example}

The $K$-spider is an example of the larger class of spaces called
\emph{open books} for which stickiness has been investigated in
\cite{HHL+13}. The $K$-spiders form the open books of dimension 1.

\begin{example}[Stickiness on the open books]
	\label{ex:open-books}
	With the notation of the previous example, an open book with $K$ pages of dimension $d \geq 2$ is given by
	$\mathfrak{B}_K^d = \mathfrak{A}_K \times \RR^{d-1}$ equipped with the product metric. Indeed, for $d\geq 2$,
	distributions, constructed similarly as above for the 3-spider have
	Fr\'echet means sticking to the $d-1$-dimensional subset $S:=\{\mathcal{O}\}
		\times R^{d-1}$, called the \emph{spine}.
	For these open books consider the \emph{folding maps}
	\begin{align*}
		\phi_j : & \mathfrak{B}_K^d \to \RR              \\
		         & (k, x_0, x_1, \ldots x_{d-1}) \mapsto
		\begin{cases}
			x_0  & \quad \text{if } j=k \;; \\
			-x_0 & \quad \text{else, }
		\end{cases}
	\end{align*}
	for $j \in \{1, \ldots,K\}$.

	Then, given $\prb \in \wst{\mathfrak{B}_K^d}$, its \emph{$j$-th folded
		moment}, $j \in \{1, \ldots,K\}$, is
	\begin{align}
		\label{eq:foldbook}
		m_j(\prb) = \int_{B_K^d} \phi (x) \ \diff \prb(x).
	\end{align}

	Now suppose $m_j(\prb) < 0$ for all $j\in \{1, \ldots,K\}$ and that $X_1, X_2, \ldots \iid \prb$ is a sample. Then it was shown \cite[Theorem 4.3]{HHL+13} that the sample Fr\'echet mean sticks to the set $S=\{ \mathcal{O}
		\} \times \RR^{d-1}$. i.e. that there is
	random $N \in \NN$ such that for all $n \geq N$
	\begin{align*}
		\argmin_{x \in B_K^d} \sum_{i=1}^n d^2(x, X_i) \subseteq \{ \mathcal{O} \} \times \RR^{d-1} \mbox{ a.s.}
	\end{align*}

	%

\end{example}


\subsection{Basic Sticky Flavors in Scenario I}

\begin{definition}[Three basic sticky flavors \ref{set:1}]
	\label{def:sticky}

	Let $\mathcal{Q} \subseteq \wst{M}$ be equipped with a topology $\tau$ and $A\subseteq N$ be measurable.
	Then, with respect to a closed $S\subset M$, a probability distribution $\prb \in
		\cQ$ is called
	\begin{description}
		\item[$(\cQ,\tau)$ topologically sticky] (T) if there is $U \in \tau$ such that $\prb \in U$ and $\mathfrak{b}(\prbalt)
			      \subseteq S$ for all $\prbalt \in U$,
		\item[$A$-perturbation sticky] (P), if for every $y \in A$
		      there is $t_y > 0$ such that
		      $\mathfrak{b}(\prb_y^t) \subseteq S$ for the \emph{perturbed measures} $\prb_y^t:=(1-t) \cdot \prb + t \cdot \delta_y$,
		      for all $0 < t < t_y$,
		\item[sample sticky] (S) if for every sample $(X_i)_{i \in \mathbb{N}}\iid\prb$, there is a random $N\in \NN$ such that $\mathfrak{b}(\prb_n) \subseteq S$ for all $n \geq N$ a.s.
	\end{description}
\end{definition}

As the following example teaches, under no additional assumptions, topological stickiness does not imply the other two flavors.

\begin{example}[(T) $\not\Rightarrow$ (P), (T)]

	Consider $A= \RR$ and $\cQ=\wst{\RR}$ equipped with the discrete topology $\tau$. Then
	trivially, every probability distribution is $\tau$ sticky. Since the
	Fr\'echet mean on $\RR$ is just the expectation, there can be no 	$\RR$-perturbation sticky distribution. Indeed, for $\prb \in \wst{\RR}$, we
	have $\mathfrak{b}(\prb) = \{\mean\}$, for some $\mu \in \RR$, and for any
	$y \in \RR \setminus \{ \mean\}$ and $t \in (0,1]$, we have for the
	perturbed measure $\prb^y_t = (1-t) \prb + t \delta_y$
	\begin{align*}
		\mathfrak{b}(\prb^y_t) & = \int_M x \ \diff \prb^y_t(x)                \\
		                       & = (1-t) \cdot \mean + t \cdot y \neq \mean\,.
	\end{align*}
	Moreover, the only sample sticky distributions in $\wst{\RR}$ are Dirac measures.
\end{example}

Recalling that for the empirical measures $\prb_n$ of $\prb \in \wst[1]{M}$, by definition and the classical law of large numbers ($\prb_n \to \prb$ a.s.) we have $\prb_n \to \prb$ in the \emph{weak
	topology} of $\cP(M)$, i.e.
\begin{align*}
	\lim_{n \to \infty} \int_M f(z) \ \diff \prb_n(z) = \int_M f(z) \ \diff \prb(z)
\end{align*}
for any bounded and continuous $f: M \to \mathbb{R}$, we have at once the following.

\begin{theorem}
	[($\prb_n\to \prb$ a.s. in T) $\Rightarrow$ (S)]\label{thm:convergence-sticky}

	Let $\tau$ be a topology for some $\mathcal{Q}\subseteq \wst{M}$ for which
	$\prb_n \to \prb$ a.s. for every $\prb \in \mathcal{Q}$. Then every topologically
	sticky measure $\prb\in \wst{M}$ is also sample sticky.

	In particular, if $\tau$ is the weak topology for $\wst{M}$ then topologically sticky $\prb \in \wst{M}$ is also sample sticky.
\end{theorem}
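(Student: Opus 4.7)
The plan is to chain the two hypotheses in a single short step. First, unpack the definition of topological stickiness (Definition \ref{def:sticky}) for $\prb$ relative to the closed set $S$: this supplies a fixed $U\in\tau$ with $\prb\in U$ and $\mathfrak{b}(\prbalt)\subseteq S$ for every $\prbalt\in U$. Next, use the standing assumption on $\tau$ that $\prb_n\to\prb$ almost surely in $\tau$; for almost every sample path this produces a random integer $N\in\NN$ with $\prb_n\in U$ for all $n\geq N$. Feeding each such $\prb_n$ into the neighborhood property of $U$ yields $\mathfrak{b}(\prb_n)\subseteq S$ for all $n\geq N$ almost surely, which is precisely the definition of sample stickiness.

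For the ``in particular'' assertion, I would appeal to the fact recalled in the paragraph immediately preceding the theorem: on a complete separable metric space, for any $\prb\in\wst[1]{M}$ the empirical measures $\prb_n=\tfrac{1}{n}\sum_{j=1}^n\delta_{X_j}$ converge weakly to $\prb$ almost surely, by the classical strong law of large numbers applied to $\int f\,\diff\prb_n$ for countably many bounded continuous test functions (a Varadarajan-type argument exploiting separability of $M$). This places the weak topology in the scope of the general statement above and closes the proof.

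There is essentially no obstacle here: the theorem is a tautological consequence of the two definitions once a.s.\ convergence in $\tau$ is granted. The only bookkeeping point worth noting is that $U$ is deterministic while $N$ is allowed to depend on the sample path; this matches the formulation of sample stickiness in Definition \ref{def:sticky} exactly, so no further care is required.
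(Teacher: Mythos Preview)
Your proposal is correct and matches the paper's approach: the paper gives no explicit proof, stating only ``we have at once the following'' after recalling that $\prb_n\to\prb$ weakly a.s., so the argument is exactly the immediate chaining of definitions you describe.
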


Typical topologies of high interest are derived from Wasserstein distances and from $f$-divergences.  As we will see, they come with different flavors of topological stickiness. To guarantee the existence of optimal couplings  \cite[Theorem 4.1]{V08}, for the Wasserstein distance, we require the following scenario of assumptions.

\begin{setting}
	\label{set:1}
	Assume that $(M,d)$ is \emph{complete} and \emph{separable} metric space.
\end{setting}


\begin{definition}
	\label{def:wstein}

	In Scenario I, for $\prb, \prbalt
		\in \wst{M}$, let $\Pi(\prb, \prbalt)$ denote the set of all
	couplings of the two distributions. Then for $q \geq 1$, the \emph{$q$-th
		Wasserstein distance} is given by
	\begin{equation*}
		W_q(\prb, \prbalt) = \left(\inf_{\varpi \in \Pi(\prb,\prbalt)}
		\int_{M \times M} d^q(x,y) \ \diff \varpi(x,y)\right)^\frac{1}{q}, \quad \prb, \prbalt \in \wst[q]{M}.
	\end{equation*}
\end{definition}

\begin{definition}
	\label{def:divergence}

	For a convex function  $f: [0, \infty) \to (-\infty, \infty]$ satisfying
	\begin{eqnarray}
		\label{eq:convex-fcn}
		\lim_{x \to 1} f(x)= f(1) = 0&\mbox{ and }&\lim_{x \to 0} f(x) = f(0)< \infty\,,
	\end{eqnarray}
	the \emph{$f$-divergence} $D_f$, of $\prb, \prbalt \in \cP(M)$ is given by 	\begin{align*}
		D_f(\prb \Vert \prbalt) = \int_M f\left(\frac{p(x)}{q(x)}\right) q(x) \ \diff \lambda(x),
	\end{align*}
	with the Radon-Nikodym derivatives
	$p = \frac{\diff \prb}{\diff\lambda}, q =
		\frac{\diff \prbalt}{\diff\lambda}$ with respect to a dominating measure $\lambda$, e.g. $\lambda = \frac{1}{2}(\prb + \prbalt)$, see \cite{fdiver}.
\end{definition}

Some examples of popular $f$-divergences are displayed in Table \ref{tab:fdiver}.

\begin{table}
	\centering
	\fbox{
		\begin{tabular}{ c | c }
			$f$-divergence             & $f(x)$                                                            \\
			\hline
			Kullback-Leibler           & $x \cdot \log(x)$                                                 \\
			total variation  distance  & $\frac{1}{2} \lvert x - 1 \rvert$                                 \\
			Jensen-Shannon             & $-(x+1) \cdot \log \left( \frac{x+1}{2}\right)+ x \cdot \log (x)$ \\
			squared Hellinger distance & $2 \cdot (1 - \sqrt{x})$                                          \\
		\end{tabular}}
	\caption{\it Some fashionable $f$-divergences satisfying the requirements of Definition \ref{def:divergence}, see e.g. \cite{fdiverbounds}.}
	\label{tab:fdiver}
\end{table}

\begin{definition}[Flavors of topological stickiness in Scenario \ref{set:1}]

	In Scenario I, for $q\geq 1$ and $\mathcal{Q} \subset \wst[q]{M}$ consider the Wasserstein distance $W_q$ and let $D_f$ be an
	$f$-divergence. Then, with respect to a closed set $S\subset M$ we say
	that $\prb \in \mathcal{Q}$ is

	\begin{description}
		\item[Wasserstein-$q$ sticky] $(\cQ,W_q)$, if there is $\epsilon > 0$  such that $\mathfrak{b}(\prbalt)\subseteq S$ for all $\prb \in \cQ$ with $W_q(\prb, \prbalt) < \epsilon$.
		\item[$f$-divergence sticky] $(\cQ,D_f)$, if there is $\epsilon > 0$ such that $\mathfrak{b}(\prbalt)\subseteq S$ for all $\prb \in \cQ$ with $D_f(\prb \Vert \prbalt) < \epsilon$.
	\end{description}
	In particular for $(D_f)$ with $f(x) =\frac{1}{2} \lvert x - 1 \rvert $, we speak of \emph{total variation} ($\cQ$, TV) \emph{sticky}, respectively, see Table \ref{tab:fdiver}.
\end{definition}

In Scenario I, we relate the various flavors as follows. In particular,
under mild conditions, Wasserstein stickiness implies the two other flavors.
Later, under more restrictive scenarios warranting additional machinery, Theorem
\ref{theorem:mainthm} elucidates when the converse holds.

\begin{theorem} 
	\label{theorem:immediateflvrs}
	In Scenario I, let $S \subset M$ be closed and for measurable $A \subset M$, $q \geq 1$, define
	\begin{align*}
		\mathcal{Q}^q_A & = \{ P \in \wst[q]{M} \ \vert \ \supp(P) \subseteq A \}\,. \\
	\end{align*}

	Then the following hold for $\prb \in \mathcal{Q}^q_{A}$ for stickiness with respect to $S$:
	\begin{itemize}
		\item[(i)]  If $\prb$ is $(\mathcal{Q}^1_A, W_1)$ sticky and $A$ is bounded, then $\prb$ is $(\mathcal{Q}^1_A, \TV)$ sticky,
		\item[(ii)]  If $\prb$ is $(\mathcal{Q}^q_A, \TV)$ sticky, $f$ is
		      twice-differentiable and $f^{\prime \prime}(x) > 0$ for all $x \neq 1$, then it is $(\mathcal{Q}^q_A, D_f)$ sticky.
		\item[(iii)] If $\prb$ is $(\mathcal{Q}^q_A, D_f)$-sticky, then it is $A$-perturbation sticky.
		\item[(iv)] If $\prb$ is $(\mathcal{Q}_A^1, W_1)$-sticky then it is $(\mathcal{Q}_A^q, W_q)$ sticky.
		\item[(v)] If $\prb$ is $(\mathcal{Q}^q_A, W_q)$-sticky then it is $A$-perturbation and sample sticky.

	\end{itemize}
\end{theorem}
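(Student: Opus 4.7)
The plan is to dispatch the five items in order, recognizing that (i), (ii), (iv) amount to topological comparisons---each proved by dominating one metric by another on $\mathcal{Q}^q_A$, so that every neighborhood in the weaker topology contains one in the stronger---while (iii) and the two halves of (v) require verifying explicitly that a specific measure (a perturbation, respectively an empirical measure) sits inside the given sticky neighborhood.

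For the easy comparisons, (i) uses Kantorovich-Rubinstein duality: for $1$-Lipschitz functions bounded in oscillation by $\diam(A)=D$ on the common support of $P$ and $Q$, one obtains $W_1(P,Q)\leq D\cdot\TV(P,Q)$, so the prescribed $W_1$-ball of radius $\epsilon$ contains a TV-ball of radius $\epsilon/D$. For (iv), Jensen's inequality applied to any coupling gives $W_1\leq W_q$, and the inclusion $\mathcal{Q}^q_A\subseteq\mathcal{Q}^1_A$ transfers the sticky neighborhood. The perturbation half of (v) uses the explicit coupling $\pi\in\Pi(P,P^y_t)$ sending mass $1-t$ along the diagonal and mass $t$ from $P$ to the atom $\delta_y$, yielding
\[
W_q^q(P,P^y_t)\leq \int d^q\,d\pi = t\int_M d^q(x,y)\,dP(x),
\]
which is finite ($y\in A$ and $P\in\mathcal{Q}^q_A$) and tends to $0$ as $t\downarrow 0$. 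The sample half of (v) follows by combining the Wasserstein strong law of large numbers on complete separable metric spaces (available for $P\in\wst[q]{M}$) with Theorem~\ref{thm:convergence-sticky}.

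Item (iii) is a direct computation. Using $P^y_t$ as the dominating measure, $dP/dP^y_t=1/(1-t)$ on $M\setminus\{y\}$ and $dP/dP^y_t(y)=P(\{y\})/((1-t)P(\{y\})+t)$, whence
\[
D_f(P\Vert P^y_t)=(1-t)\bigl(1-P(\{y\})\bigr)\,f\!\left(\tfrac{1}{1-t}\right)+\bigl((1-t)P(\{y\})+t\bigr)\,f\!\left(\tfrac{P(\{y\})}{(1-t)P(\{y\})+t}\right).
\]
The first summand vanishes as $t\downarrow 0$ by continuity of $f$ at $1$; the second vanishes via $f(1)=0$ when $P(\{y\})>0$, and via the factor $t$ multiplied by the finite value $f(0)$ when $P(\{y\})=0$. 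So $D_f(P\Vert P^y_t)$ drops below the $D_f$-sticky radius for small $t$, furnishing $\mathfrak{b}(P^y_t)\subseteq S$.

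The main obstacle is (ii), which requires dominating TV by $D_f$. Replacing $f$ by $g(r):=f(r)-f'(1)(r-1)$ leaves $D_f$ invariant (because $\int(r-1)\,dP=0$ for $r=dQ/dP$) and yields a nonnegative twice differentiable function with $g(1)=g'(1)=0$ and $g''(r)>0$ for $r\neq 1$. For any $\delta>0$, strict convexity and continuity of $g$ on $[0,\infty)$ give $c_\delta:=\inf_{|r-1|\geq\delta}g(r)/|r-1|>0$; splitting $\int g(r)\,dP$ at the threshold $|r-1|=\delta$ then produces a strictly increasing modulus $\psi$ with $\psi(0)=0$ and $D_f(P\Vert Q)\geq\psi\bigl(\TV(P,Q)\bigr)$. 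Choosing $\epsilon':=\psi(\epsilon)$ with $\epsilon$ the TV-sticky radius delivers the desired $D_f$-sticky neighborhood. The delicate point where (fc) is essential is the positivity $c_\delta>0$ when $g''(1)$ may vanish, which rests precisely on strict convexity of $g$ away from $1$.
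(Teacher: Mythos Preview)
Your proof is correct and follows essentially the same route as the paper's: parts (i), (iii), (iv), and (v) match the paper's argument almost verbatim (same inequality $W_1\le\diam(A)\cdot\TV$ from Kantorovich--Rubinstein, same explicit coupling for the perturbation bound, same appeal to $W_1\le W_q$, same computation of $D_f(P\Vert P^y_t)$, and same use of the $W_q$ strong law with Theorem~\ref{thm:convergence-sticky}).

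The one place you diverge is (ii): the paper simply cites a Pinsker-type inequality $\TV(P,Q)\le C_f\,D_f(P\Vert Q)$ from \cite{gilardoni}, whereas you supply a self-contained argument via the affine shift $g(r)=f(r)-f'(1)(r-1)$ and the lower bound $c_\delta=\inf_{|r-1|\ge\delta}g(r)/|r-1|>0$. Your approach is more elementary and makes transparent exactly where the hypothesis $f''>0$ on $\{r\neq 1\}$ enters. Two small remarks: your notation ``$r=dQ/dP$, integrate $dP$'' is flipped relative to the paper's convention $D_f(P\Vert Q)=\int f(p/q)\,q\,d\lambda$, though this is cosmetic; and the phrase ``strictly increasing modulus'' is slightly more than you actually prove or need---what your splitting argument really yields is that for each $\epsilon>0$ the choice $\epsilon'=c_\epsilon\cdot\epsilon>0$ gives $D_f(P\Vert Q)<\epsilon'\Rightarrow\TV(P,Q)<\epsilon$, which is all that is required.
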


\begin{proof}

	(i) follows at once from 
	\begin{align*}
		W_1(\prb,\prbalt) \leq \diam(A) \cdot \TV(\prb, \prbalt),
	\end{align*}
	where $\diam(A) = \sup_{x,y \in M} d(x,y)$ denotes the diameter, cf.
	\cite[Theorem 6.15]{V08}.

	(ii) for such $f$, there is a constant $C_f$ such that
	\begin{align*}
		\TV(\prb, \prbalt) \leq C_f \cdot D_f(\prb \lVert \prbalt)
	\end{align*}
	for all $\prb,\prbalt \in \cP(M)$, see \cite{gilardoni}. 

	By assumption in (iii), there is $\epsilon > 0$ such that
	$\mathfrak{b}(\prbalt) \subseteq S$ for any $\prbalt \in \cQ_A^q$ with
	$D_f(P \Vert Q) < \epsilon$. Hence, for any point $y \in A$ and $t \in
		[0,1)$, giving rise to the perturbed measure $\prb_y^t = (1-t) \prb + t
		\delta_y \in \cQ_A^q$, we have $\prb\ll \prb_y^t \in \wst{M}$, and with $w_y =
		\prb(\{y\}) \geq 0$ thus
	\begin{equation}
		\label{eq:fdvirpert}
		\begin{split}
			D_f(\prb \Vert \prb_y^t) & =
			\int_{M \setminus \{y\}} f\left(\frac{1}{1-t}\right) \ \diff \prb_y^t(z)
			+ \int_{\{y\}} f\left( \frac{w_y}{(1-t)w_y + t}\right) \ \diff \prb_y^t(z)                                                                    \\
			                         & = (1-t)	(1- w_y) f\left( \frac{1}{1- t}\right) + \big(t + (1-t)w_y\big) f\left( \frac{w_y}{(1 - t)w_y + t}\right).
		\end{split}
	\end{equation}
	By assumptions in (\ref{eq:convex-fcn}) on $f$, observe that $D_f(\prb \Vert \prb_y^t) \to 0$ for $t \to 0$. Thus, $D_f(\prb \Vert \prb_y^t) < \epsilon$ for small enough $t$, and hence $\mathfrak{b}(\prb_y^t) \subseteq S$ as asserted in (i).

	(iv) This is an immediate consequence of the fact that
	\begin{align*}
		W_1(\prb, \prbalt) \leq W_q(\prb, \prbalt) \quad \prb, \prbalt \in \wst[q]{M}\,,
	\end{align*}
	see \cite[Remark 6.6]{V08}.\\
	(v): We first establish $A$-perturbation stickiness. Once again taking any point $y \in A$ and $t \in [0,t]$, giving rise to the perturbed measure $\prb_y ^t = (1-t)
		\cdot \prb + t \cdot \delta_y$, pick optimal couplings $\varpi_0 \in \Pi(\prb,\prb)$ and $\varpi_1 \in \Pi(\prb, \delta_y)$ with respect to the cost function $d^q$. Then, $\varpi_t = (1-t) \cdot \varpi_0 + t \cdot
		\varpi_1 \in \Pi(\prb, \prb_y^t)$. Hence,
	\begin{align}\label{eq:Wasserstein-perturbtion}
		W^q_q(\prb, \prb_y^t) \leq t \cdot W^q_q(\prb, \delta_y)\,,
	\end{align}
	yielding $\mathfrak{b}(\prb_y^t) \subseteq S$  for small enough
	$t$ by hypothesis on $W_q$ stickiness as, by the above, $\prb_y^t \to \prb$ for $t \to 0$ in $W_q$ distance.

	The second assertion follows at once from Theorem
	\ref{thm:convergence-sticky} since empirical distributions $\prb_n$
	converges almost surely to $\prb \in \wst[q]{M}$ in $W_q$ distance, due to
	\cite[Theorem 6.9]{V08}. Here, note that almost surely $\supp(\prb_n) \subseteq \supp(\prb)$.
\end{proof}

\subsection{Counterexamples}

In general, topologies generated by $f$-divergences do not satisfy a.s.
convergence of empirical measures as required in Theorem
\ref{thm:convergence-sticky}.

\begin{example}[$P_n\not\to P$ a.s. in TV]
	For the total variation distance,
	\begin{align*}
		\TV(\prb, \prbalt) = \sup_{B \in \mathcal{B}(M)} \lvert \prb(B) - \prbalt(B)\rvert,
	\end{align*}
	(see \cite[Definition 2.4]{tsy}) where $\mathcal{B}(M)$ denotes the Borel $\sigma$-algebra of $(M,d)$, $\prb,\prbalt \in \cP(M)$, the empirical distribution $\prb_n$ does converge a.s. to its population distribution $\prb$, if the latter is continuous, since then, a.s.,
	$$\TV(\prb, \prb_n) = 1\,.$$
\end{example}

\begin{example}\label{ex:kale}[The kale \cite{HMMN15}, equivalence of flavors]
	\label{ex:kale}

	Consider the set $\mathbb{R} / \alpha \mathbb{Z}$ for $\alpha > 2
		\pi$. We equip this set with the metric
	\begin{align*}
		d_\alpha(\theta_1, \theta_2) =
		\min_{n \in \mathbb{Z}}\lvert \theta_1 - \theta_2 + n \alpha \rvert
		,\quad \theta_1, \theta_2 \in \mathbb{R} / \alpha \mathbb{Z}.
	\end{align*}
	One can interpret this as a circle with angle sum $\alpha$ larger than
	$2\pi$. \\
	The Euclidean cone over this set, called \emph{the kale}
	$\mathcal{K}_\alpha$, has been of special interest as probability
	measures can exhibit \emph{stickiness} at the cone point \cite*{HHL+13}.
	An example of geodesics are shown in Figure \ref{fig:kalegeod}.\\
	Similar to open books, it was observed that a distribution $\prb \in
		\wst{\mathcal{K}_\alpha}$ is sample sticky at the cone point if for all
	$\theta \in [0,\alpha)$, the \emph{first folded moment} $m_\theta$ is
	negative, i.e.
	\begin{align*}
		0 > m_\theta = \int_\prb r \cdot \cos\left(\min\{\pi, d_\alpha(\theta, \tilde{\theta}\}\right) \ \diff (r, \tilde{\theta}).
	\end{align*}
	Even more, the equivalence of this condition to $\mathcal{K}_\alpha$-perturbation stickiness
	and $(\wst{\mathcal{K}_\alpha},W_1)$-stickiness was shown in \cite[Theorem 7.6]{HMMN15}.
	\begin{figure}[!h]
		\centering
		\includegraphics[scale=1]{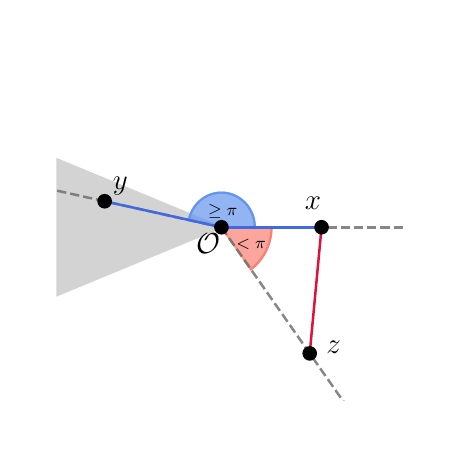}
		\caption{\it As the angle between $x$ and $y$ at $\cO$ is at least $\pi$, the geodesic path between them passes through the cone point $\cO$. We say the direction from $\cO$ to $y$ lies in the "shadow" of the direction from $\cO$ to $x$.
			However, the geodesic between $x$ and $z$ does not, as the angle between them at $\cO$ is less than $\pi$.
		}
		\label{fig:kalegeod}
	\end{figure}
\end{example}

\begin{example}[(S) $\not\Rightarrow$ (W), (P)]
	\label{ex:samplenotW1}

	Consider the two-dimensional open book with three pages $M=\mathfrak{B}_3^2$,
	cf. Example \ref{ex:kspider}. As a direct product of
	Hadamard spaces, $M$ is again a Hadamard space and for any $P \in \wst{M}$,
	its unique mean is given by $\mathfrak{b}(\prb) = (\mathfrak{b}(\prb'),
		\mathfrak{b}(\prb''))$, where $\prb'$ and $\prb''$ are the marginal
	distributions on $\mathfrak{A}_3$ and $\RR$, respectively, see \cite[Lemma
		3.9 and Proposition 5.5]{Sturm}. Then consider $$\prb=
		\frac{1}{3}\sum_{j=1}^3 \delta_{((j,1),0)}\,,$$ the sum of three point
	masses with weight $1/3$, distance 1 to the origin at height $0 \in \RR$,
	see Figure \ref{fig:samplenotW1}. Clearly, a sample Fr\'echet
	mean of i.i.d. samples of that distribution is almost surely at height 0.
	Furthermore, one may verify by computing the first folded means, see
	Equation (\ref{eq:foldbook}), that the distribution is sample sticky
	on the spine $S=\{\cO\} \times \RR$. Combining these two observations, we see
	that the distribution is actually sample sticky on the single point
	$S=\{(\mathcal{O}, 0)\}$. \\
	However, as displayed in Figure \ref{fig:samplenotW1}, the distribution is
	not $\mathfrak{B}_3^2$-perturbation sticky and, consequently, not
	$(\wst{\mathfrak{B}_3^2}, W_1)$-sticky by Theorem \ref{theorem:immediateflvrs}.
	Perturbations with point masses at different heights will always shift the
	Fr\'echet mean along the spine $\{ \cO \} \times \RR$.



	\begin{figure}[!h]

		\centering
		\includegraphics[scale=.8]{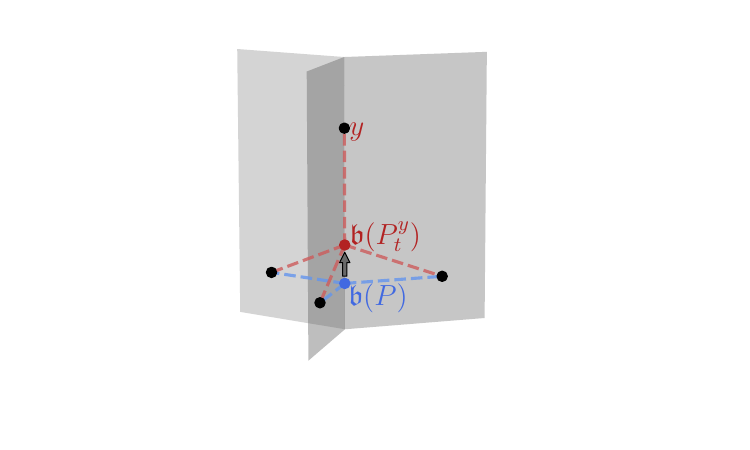}
		\caption{\it The distribution $\prb$ is sample sticky but neither Wasserstein nor perturbation sticky.}
		\label{fig:samplenotW1}
	\end{figure}

\end{example}


\begin{example}[P-sticky $\not\Rightarrow$ W-sticky]
	\label{ex:pertnotW1}

	Consider the set $M = \{0 \} \cup \{ 1/n \ \vert \ n \in \NN\}$ equipped
	with the following metric
	\begin{align*}
		d(x,y) =
		\begin{cases}
			0     & \text{if } x = y, \\
			x + y & \text{else.}
		\end{cases}
		\
	\end{align*}
	It is easy to see that $(M,d)$ is a metric space that is separable (because
	it is countable), bounded and complete (all non-trivial Cauchy-sequences
	converge to $0$).
	Every probability measure on $M$ has arbitrary moments, and every
	Wasserstein distance is well-defined.

	Consider now $\prb = \delta_0$. For every $\epsilon >0$ we can find an $n
		\in \NN$ such that $\frac{1}{n} < \epsilon$. Hence, $W_1(\delta_0,
		\delta_{1/n}) = d(0, 1/n) = 1/n < \epsilon$. Since
	$\mathfrak{b}(\delta_{1/n}) =\{ 1/n\}$, as it is just point mass,
	$\delta_0$ is not $(M,W_1)$-sticky.

	On the other hand, for every $1/n = x\in M$ and all $0\leq t \leq 1$, we
	have that $\mathfrak{b}((1-t) \delta_0 + t \delta_x) = \{0\}$ as
	\begin{align*}
		2 \cdot F_{(1-t) \delta_0 + t \delta_x} (0) = t \cdot \frac{1}{n^2}
		 & < (1-t) \cdot \frac{1}{m^2} + t \left(\frac{1}{m}+\frac{1}{n}\right)^2 \\
		 & =
		(1-t) \cdot d(0,y)^2 + t d(x,y)^2                                         \\
		 & = 2 \cdot F_{(1-t) \delta_0 + t \delta_b}(x),
	\end{align*}
	for any $1/m=y \in M$. Therefore, $\delta_0$ is $M$-perturbation sticky.

\end{example}


%
%

\section{CAT($\kappa$) Spaces and Spaces of Their Directions}\label{scn:CAT}


The spaces mentioned in the introduction and in the examples of Section \ref{scn:3-flavors} have in common that they are \emph{spaces of global nonpositive curvature} or \emph{CAT(0) spaces}. In this section, we will first review some basic metric geometry for \cat~spaces and then introduce their tangent cones and spaces of directions. This will be used for studying stickiness in the subsequent sections. The following definitions and results can be found in \cite{BB08},\cite{BH11}, \cite{Sturm}, \cite{LN18} and
\cite{BBI01}.

\subsection{A Brief Review of CAT($\kappa$) Spaces}
\begin{definition}[Geodesics and geodesic spaces]

	For two distinct points $x, y \in M$, a \emph{curve} $\gamma$ from $x$ to
	$y$ is a continuous map
	\begin{align*}
		\gamma : [a,b] \to M,\quad a,b \in \mathbb{R}\,,
	\end{align*}
	with $\gamma(a) = x$ and $\gamma(b) = y$, having length
	\begin{align*}
		L(\gamma) = \sup_{\substack{ a \leq t_1 < \ldots < t_n \leq b \\
				n \in \mathbb{N}\setminus \{1\}
			}} \sum_{i=1}^{n-1} d(\gamma(t_{i}), \gamma(t_{i+1})).
	\end{align*}

	If $L(\gamma|_{[a,t]}) = d(x,\gamma(t))$ for all $a<t\leq b$ where $\gamma|_{[a,t]}$ is the curve $\gamma$ restricted to $[a,t]$ we say that $\gamma$ is a \emph{geodesic}. In fact, then it is a \emph{minimizing unit speed geodesic}, but as we are not concerned with geodesics of other speeds, we refer to them simply as geodesics.

	The metric space $(M,d)$ is called a \emph{geodesic space} if any two
	distinct points can be joined by a geodesic.

	For $r>0$ the metric space $(M,d)$ is called \emph{$r$-geodesic} if any 	two distinct points $x,y\in M$ with $d(x,y) < r$ can be joined by a geodesic. If the space is $r$-geodesic for all $r>0$ it is called a \emph{geodesic space}.

\end{definition}

Notably the space $N=\{1,\ldots,K\}$ with the trivial metric, whose Euclidean
cone yielded the $K$-spider, cf. Example
\ref{ex:kspider}, is $\pi$-geodesic but not geodesic.

The notion of convex sets and functions in vector spaces generalizes naturally
to geodesic metric spaces.

\begin{definition}[Convex sets and functions]

	Let $(M,d)$ be a geodesic space. Then $A \subseteq M$ is called \emph{convex} if for any two points, $x,y \in M$, all geodesics joining 	$x$ and $y$ are contained in $A$.

	A function $f : M \to \RR$ is called \emph{convex} if all concatenations 	$f \circ \gamma$ with geodesics $\gamma: [a,b] \to M$ are convex, i.e. for any $t \in [0,1]$ and $c,d \in [a,b]$,
	\[
		f\big(\gamma((1-t) \cdot c + t\cdot d) \big) \leq (1-t) \cdot f\big(\gamma(c)\big)
		+ t \cdot f\big(\gamma(d)\big).
	\]

\end{definition}

For $\kappa \in \RR$, we define the \emph{model} or \emph{reference spaces} $(M_\kappa, d_\kappa)$ as the metric spaces induced by
the two-dimensional Riemannian manifold structures with constant sectional curvatures $\kappa$ as follows.

\begin{definition}[Reference spaces]

	Let
	\begin{align*}
		M_\kappa :=
		\begin{cases}
			\mathbb{H}^2 \quad & \text{if } \kappa < 0, \\
			\RR^2 \quad        & \text{if } \kappa = 0, \\
			\mathbb{S}^2 \quad & \text{if } \kappa > 0, \\
		\end{cases}
	\end{align*}
	where
	\begin{align*}
		\mathbb{S}^2 & := \{ (x_0, x_1, x_2) \in \RR^3 \ : \ x_0^2 + x_1^2 + x_2^2 =1 \},              \\
		\mathbb{H}^2 & := \{ (x_0, x_1, x_2) \in \RR^3 \ : \ x_0^2 + x_1^2 - x_2^2 = -1, \ x_2 > 0 \},
	\end{align*}
	are the two-dimensional unit sphere and hyperbolic plane, respectively, with hyperbolic and spherical distances, respectively, given by
	\begin{align*}
		d_{-1}(x,y) & := \arcosh (- x_0 y_0 - x_1 y_1 + x_2 y_2 ) & \mbox{ for } x, y\in \mathbb{H}^2    \\
		d_1(x,y)    & := \arccos (x_0 y_0 + x_1 y_1 + x_2 y_2 )   & \mbox{ for } x, y\in \mathbb{S}^2\,,
	\end{align*}
	for $x = (x_0,x_1,x_2), y = (y_0,y_1,y_2)$. Then, obtain $d_\kappa$ by rescaling
	\begin{align*}
		d_\kappa(x, y) :=
		\begin{cases}
			\frac{1}{\sqrt{- \kappa}} d_{-1}(x, y) \quad & \text{if } \kappa < 0, \\
			\lVert x - y \rVert_2 \quad                  & \text{if } \kappa = 0, \\
			\frac{1}{\sqrt{\kappa}} d_{1}(x, y) \quad    & \text{if } \kappa > 0, \\
		\end{cases}
	\end{align*}
	for $x,y \in M_\kappa$,
	where $\lVert \cdot \rVert_2$ is the Euclidean norm. Furthermore, we
	define the \emph{geodesic half radii}
	\begin{align}\label{eq:R_kappa}
		R_\kappa :=
		\begin{cases}
			\infty \quad                    & \textrm{if } \kappa \leq 0, \\
			\frac{\pi}{\sqrt{\kappa}} \quad & \textrm{else.}
		\end{cases}
	\end{align}
\end{definition}

\begin{definition}[Reference triangles]
	Let $x_1,x_2,x_3 \in  M$ be distinct points and $\kappa \in \RR$. Then  $x'_1,x'_2,x'_3 \in  M_\kappa$ form a \emph{reference triangle} in $(M_\kappa,d_\kappa)$ if
	\begin{align*}
		d(x_i,x_j) = d_\kappa(x_i^\prime, x_j^\prime) \quad \forall i,j \in \{1,2,3\}.
	\end{align*}
\end{definition}

The following lemma teaches that for triangles of sufficiently small
perimeter, reference triangles in $M_\kappa$ always exist.

\begin{lemma}
	\label{lem:reference-triangle}
	Let $\kappa \in \RR$. Then for any three distinct points
	$x_1,x_2,x_3 \in M$ with perimeter $d(x_1,x_2) + d(x_1,x_3) + d(x_2,x_3) \leq 2 R_\kappa$, there exists a reference triangle in $(M_\kappa, d_\kappa)$.
\end{lemma}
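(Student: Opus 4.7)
The plan is to reduce the problem to the three unit-curvature models $\RR^2$, $\mathbb{H}^2$, $\mathbb{S}^2$ by rescaling, and then realize the triangle explicitly by intersecting two metric spheres, using the appropriate law of cosines. Set $a = d(x_2,x_3)$, $b = d(x_1,x_3)$ and $c = d(x_1,x_2)$; these are strictly positive since the $x_i$ are distinct and satisfy the triangle inequality $|b-c| \leq a \leq b+c$ together with the symmetric variants. Because $d_\kappa = |\kappa|^{-1/2} d_{\pm 1}$ for $\kappa \neq 0$ (with sign matching that of $\kappa$) and $d_0$ is Euclidean, it suffices to realize the rescaled side lengths $a' = \sqrt{|\kappa|}\,a$, $b' = \sqrt{|\kappa|}\,b$, $c' = \sqrt{|\kappa|}\,c$ in the unit-curvature model corresponding to the sign of $\kappa$.

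For $\kappa \leq 0$ there is no perimeter constraint since $R_\kappa = \infty$. I would place $x_1'$ at a fixed basepoint, $x_2'$ at distance $c'$ along a chosen geodesic ray from $x_1'$, and obtain $x_3'$ by intersecting the metric spheres of radii $b'$ and $a'$ around $x_1'$ and $x_2'$. Existence reduces to solvability of
\begin{align*}
\cos\theta \;=\; \frac{b'^2+c'^2-a'^2}{2b'c'} \quad(\text{Euclidean}), \qquad \cos\theta \;=\; \frac{\cosh b'\cosh c'-\cosh a'}{\sinh b'\sinh c'} \quad(\text{hyperbolic}),
\end{align*}
for some $\theta \in [0,\pi]$. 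In each case this is equivalent to $|b'-c'|\leq a' \leq b'+c'$, i.e.\ to the triangle inequality already available.

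The main work, and the reason for the perimeter hypothesis, is the spherical case $\kappa > 0$. The spherical law of cosines
\begin{align*}
\cos a' \;=\; \cos b' \cos c' + \sin b' \sin c' \cos \theta
\end{align*}
admits $\theta \in [0,\pi]$ iff $\cos(b'+c') \leq \cos a' \leq \cos(b'-c')$. The right inequality encodes $a' \geq |b'-c'|$, which is the triangle inequality. The left inequality encodes $a' \leq b'+c'$ when $b'+c' \leq \pi$, but when $b'+c' > \pi$ it becomes $a' \leq 2\pi-(b'+c')$, i.e.\ $a'+b'+c' \leq 2\pi$. Thus the hypothesis $a+b+c \leq 2R_\kappa$, after rescaling, supplies exactly the missing ingredient. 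It also forces $a',b',c' \leq \pi$: otherwise, if say $a' > \pi$, then combined with $a' \leq b'+c'$ one would get perimeter $> 2\pi$. Hence $\sin b', \sin c' > 0$ and the formula for $\cos\theta$ is well-posed.

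Once a valid $\theta$ is produced, I would fix $x_1'$ at an arbitrary point of the model space, place $x_2'$ at distance $c'$ along a chosen geodesic ray from $x_1'$, and locate $x_3'$ at distance $b'$ from $x_1'$ along the geodesic ray making angle $\theta$ with that of $x_2'$; the corresponding law of cosines then yields $d_\kappa(x_2',x_3') = a$, as required. The main obstacle is the case analysis of the lower bound in the spherical setting, since this is precisely where the geodesic half-radius $R_\kappa$ enters the hypothesis, whereas in the other two regimes the triangle inequality alone suffices.
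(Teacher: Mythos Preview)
Your argument is essentially correct and complete, whereas the paper itself offers no proof at all: it merely cites \cite[Lemma I.2.14]{BH11}. So you are supplying what the reference contains, namely the explicit law-of-cosines construction in each model space, together with the observation that the perimeter bound $a'+b'+c'\leq 2\pi$ is exactly what is needed on the sphere when $b'+c'>\pi$.

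One small edge case is not covered by your wording. From $a'+b'+c'\leq 2\pi$ and the triangle inequality you correctly deduce $a',b',c'\leq\pi$, but then assert ``hence $\sin b',\sin c'>0$''. This fails when, say, $c'=\pi$: then $x_1'$ and $x_2'$ are antipodal and your formula for $\cos\theta$ has a vanishing denominator. The remedy is immediate --- in that situation $a'+b'=\pi$ is forced (by the perimeter bound together with $a'+b'\geq c'$), and any point at distance $b'$ from $x_1'$ automatically lies at distance $\pi-b'=a'$ from its antipode $x_2'$ --- but it should be mentioned. Apart from this, your proof is sound and in fact more informative than the paper's bare citation.
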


\begin{proof}
	\cite[Lemma I.2.14]{BH11}
\end{proof}

\begin{definition}[CAT($\kappa$) space]
	\label{Def:CATk}
	Let  $\kappa \in \RR$. Then an $R_\kappa$-geodesic space $(M,d)$ is a CAT($\kappa$) space if for any distinct $x_1,x_2,x_3 \in M$ with perimeter $d(x_1,x_2) + d(x_1,x_3) + d(x_2,x_3) \leq 2 R_\kappa$ and reference triangle $x'_1,x'_2,x'_3 \in M_\kappa$,
	\begin{align*}
		d(x_1, \gamma(t)) \leq d_\kappa(x'_1, \gamma'(t)), \mbox{ for all }0\leq t \leq d(x_2,x_3)\,,
	\end{align*}
	where $\gamma$ is a geodesic in $(M,d)$ from $\gamma(0)=x_2$ to $\gamma(d(x_2,x_3))= x_3$ and $\gamma'$ is a geodesic in $(M_\kappa,d_\kappa)$ from $\gamma'(0)=x'_2$ to $\gamma'(d(x_2,x_3))= x'_3$, see Figure \ref{fig:npc-quad}.

	A complete CAT($0$) is also called a \emph{Hadamard space}.
\end{definition}

Here ``CAT'' stands for the three mathematicians Cartan, Alexandrov and
Topogonov, and $\kappa$ bounds the global curvature from above, see
Proposition \ref{prop:catgeod}. In contrast, while local curvature can be zero
as is the case for circles, cylinders and tori, the unit circle is only
CAT($\kappa$) for all $\kappa \geq 1$. The following proposition collects some
well-known results for \cat ~spaces.

\begin{proposition}
	\label{prop:catgeod}

	Let $(M,d)$ be a \cat ~space for $\kappa \in \RR$. Then, the following
	hold:
	\begin{enumerate}
		\item[(i)] any two distinct $x,y\in M$ with $d(x,y) < R_\kappa$ can be joined by a unique geodesic,
		\item[(ii)] $(M,d)$ is a CAT($\kappa^\prime$) space for all $\kappa^\prime \geq \kappa$.
		\item[(iii)] if $\kappa > 0$, then $(M,\breve{d})$ is a CAT(1)-space, where
		      $\breve{d}(x,y): = \sqrt{\kappa} \cdot d(x,y)$ for $x,y \in M$.
		\item[(iv)] take three points $x,y,z \in M$, and if $\kappa >0$ assume that $x,y,
			      z \in U \subset M$ with $\diam (U) \leq R_\kappa / 2 - \epsilon$
		      for $ 0 < \epsilon < R_\kappa /2$, then
		      \begin{align}
			      \label{ineq:NPC}
			      d^2(z, \gamma_x^y(t \cdot l)) \leq (1-t) \cdot d^2(z, x) + t \cdot d^2(z, y) - C_{\kappa,\epsilon} (1-t) \cdot t \cdot d^2(x, y),
		      \end{align}
		      for all $t \in [0,1]$, where $\gamma_x^y: [0,l] \to U$ is the unique  (see (i)) geodesic from $x$ to $y$ with $l = d(x,y)$ and
		      \begin{align*}
			      C{\kappa,\epsilon} =
			      \begin{cases}
				      1 \quad                                                               & \text{if } \kappa \leq 0, \\
				      (\pi - 2\epsilon\sqrt{\kappa})\cdot \tan(\epsilon\sqrt{\kappa}) \quad & \text{else.}
			      \end{cases}
		      \end{align*}
	\end{enumerate}
\end{proposition}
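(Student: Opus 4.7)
My plan is to sketch each of these well-known CAT($\kappa$) facts and invoke the standard references, since the statement essentially collects textbook material. I expect the only nontrivial work to be the quantitative constant in (iv) when $\kappa>0$.

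For (i), I would apply the CAT($\kappa$) comparison to two minimizing geodesics $\gamma_1,\gamma_2$ joining $x$ and $y$ with $l=d(x,y)<R_\kappa$: the triangles with vertices $x,y,\gamma_i(l/2)$ have perimeter at most $2l<2R_\kappa$, so reference triangles exist by Lemma \ref{lem:reference-triangle}, and since midpoints of geodesics of length less than $R_\kappa$ are unique in $M_\kappa$, the comparison inequality forces $\gamma_1(l/2)=\gamma_2(l/2)$; iterating on dyadic subdivisions and invoking continuity yields $\gamma_1\equiv\gamma_2$, cf.\ \cite[Proposition II.1.4]{BH11}. Part (iii) is a direct rescaling: the identity map from $(M,d)$ to $(M,\breve{d})$ scales all distances by $\sqrt{\kappa}$, and $(M_1,d_1)$ is isometric to $(M_\kappa,\sqrt{\kappa}\,d_\kappa)$ by the very definition of $d_\kappa$, so the CAT(1) comparison for $(M,\breve{d})$ reduces on both sides to the CAT($\kappa$) comparison for $(M,d)$; since $R_1 = \sqrt{\kappa}\,R_\kappa$, the perimeter constraint transfers verbatim.

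For (ii), I would invoke Alexandrov's lemma: when $\kappa'\ge\kappa$, one has $R_{\kappa'}\le R_\kappa$, so every triangle admissible for the CAT($\kappa'$) condition is also admissible for CAT($\kappa$). A classical monotonicity computation then shows that for fixed side lengths, the distance from a vertex to a point on the opposite side in the $M_{\kappa'}$-reference triangle dominates the corresponding distance in the $M_\kappa$-reference triangle, so the CAT($\kappa$) inequality for $(M,d)$ immediately implies the CAT($\kappa'$) inequality; the full argument is in \cite[Theorem II.1.12]{BH11}.

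Part (iv) is the strong convexity of the squared distance. The case $\kappa\le 0$ is classical: the CAT(0) inequality applied to the triangle with vertices $z,x,y$, combined with the parallelogram identity in the Euclidean reference, yields the bound with constant $C_{\kappa,\epsilon}=1$, see \cite[Proposition 2.3]{Sturm}. For $\kappa>0$, the hypothesis $\diam(U)\le R_\kappa/2-\epsilon$ keeps the reference triangle inside a spherical cap strictly smaller than a hemisphere, and the main obstacle is to bound from below the defect from linearity of $t\mapsto d_\kappa^2(z',\gamma'(tl))$ on this cap. I would carry out an explicit spherical-trigonometric computation in $M_\kappa$, using the spherical cosine law together with the upper bound $\epsilon\sqrt{\kappa}$ on distances measured from the midpoint of $\gamma'$, to extract the factor $(\pi-2\epsilon\sqrt{\kappa})\tan(\epsilon\sqrt{\kappa})$. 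This is the only truly nontrivial computation, and detailed versions appear in \cite{Sturm} and \cite{BH11}; the CAT($\kappa$) comparison then transfers the inequality from $M_\kappa$ to $M$.
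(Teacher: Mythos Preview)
Your proposal is correct and takes essentially the same approach as the paper: both treat (i)--(iv) as a compilation of textbook facts and defer to standard references, with (iii) handled by the obvious rescaling argument. The paper is in fact more terse than you---it simply cites \cite[Theorem II.1.4]{BH11} for (i), \cite[Theorem II.1.12]{BH11} for (ii), and \cite[Proposition 3.1]{ohta} for (iv), and only writes out (iii) in detail.

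One small correction: for the $\kappa>0$ case of (iv), you point to \cite{Sturm} and \cite{BH11} for the explicit constant $(\pi-2\epsilon\sqrt{\kappa})\tan(\epsilon\sqrt{\kappa})$, but neither source contains it---Sturm's notes treat only the nonpositive-curvature case, and Bridson--Haefliger do not derive this quantitative convexity bound for $d^2$. The constant comes from Ohta's \emph{Convexities of metric spaces} \cite[Proposition 3.1]{ohta}, which is what the paper cites. Your outlined spherical-trigonometric derivation would reproduce Ohta's argument, so the mathematics is fine; just the attribution needs adjusting.
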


\begin{proof}

	For (ii), see e.g. \cite[Theorem II.1.2]{BH11}. For (i) and (iv), see
	\cite[Theorem II.1.4]{BH11} and \cite[Proposition 3.1]{ohta}. (iii) is
	folklore, for convenience we give a short proof.

	Clearly, $(M,\breve{d})$ is $\pi$-geodesic since $(M,d)$ is $(\pi /
		\sqrt{\kappa})$-geodesic. Choose any three distinct points $x,y,z \in M$
	such that the corresponding triangle has perimeter less than or equal $2
		\pi$ with respect to $\breve{d}$. Then that same triangle has perimeter
	less than or equal to $2 R_\kappa$ with respect to $d$, and, due to Lemma
	\ref{lem:reference-triangle}, there is a reference triangle for $(M,d)$
	formed by $x^\prime, y^\prime, z^\prime\in \mathbb{S}^2$ with respect to
	$d_\kappa$. In consequence, it forms also a reference triangle with respect
	to $d_1$ for $(M,\breve{d})$. With a geodesic $\breve\gamma$ from
	$y=\breve\gamma(0)$ to $z = \breve\gamma(\breve{d}(y,z))$ in
	$(M,\breve{d})$, we have that $t\mapsto \gamma(t) =
		\breve\gamma(t/\sqrt{\kappa})$ is a geodesic in $(M,d)$ from $y=
		\gamma(0)$ to $z= \gamma(d(y,z))$, and hence for every $0 \leq t \leq
		\breve d(y,z)$, we have
	\begin{align*}
		\breve{d}(x, \breve \gamma(t)) & = \sqrt{\kappa} \cdot d(x, \gamma(t \sqrt{\kappa}))   \leq  \sqrt{\kappa} \cdot d_\kappa(x^\prime, \gamma'(t\sqrt{\kappa})) = d_1(x^\prime, \breve\gamma'(t)).
	\end{align*}
	Here $\breve\gamma'$ is the geodesic from $y'= \breve\gamma'(0)$ to $z' = \breve\gamma'(\breve d(y,z))$ with respect to $d_1$, which reparametrizes to the geodesic  $t\mapsto \gamma'(t) = \breve\gamma'(t/\sqrt{\kappa})$ from $y'= \gamma'(0)$ to $z' = \gamma'(d(y,z))$ with respect to $d_\kappa$, for which the above inequality holds, by assumption. Thus, $(M, \breve{d})$ is indeed a CAT(1)-space.

\end{proof}

\begin{figure}[!h]
	\centering
	\subfloat[{\it Points involved in the \cat ~property.\label{fig:a}}]{\includegraphics[height=5cm,
			width=5cm]{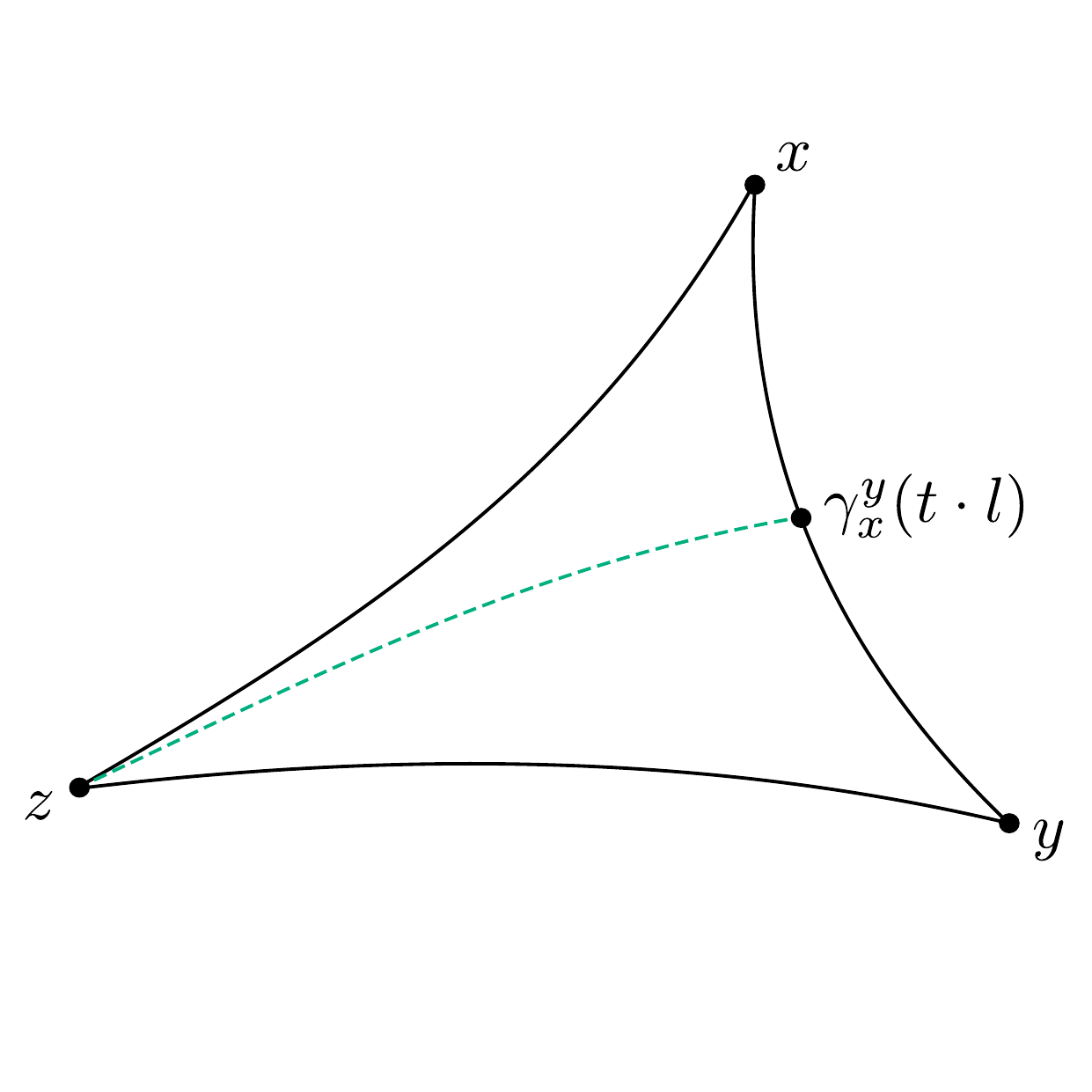}}\qquad
	\subfloat[Quadruple comparison.  \label{fig:b}]{\includegraphics[
			height=5cm, width=5cm]{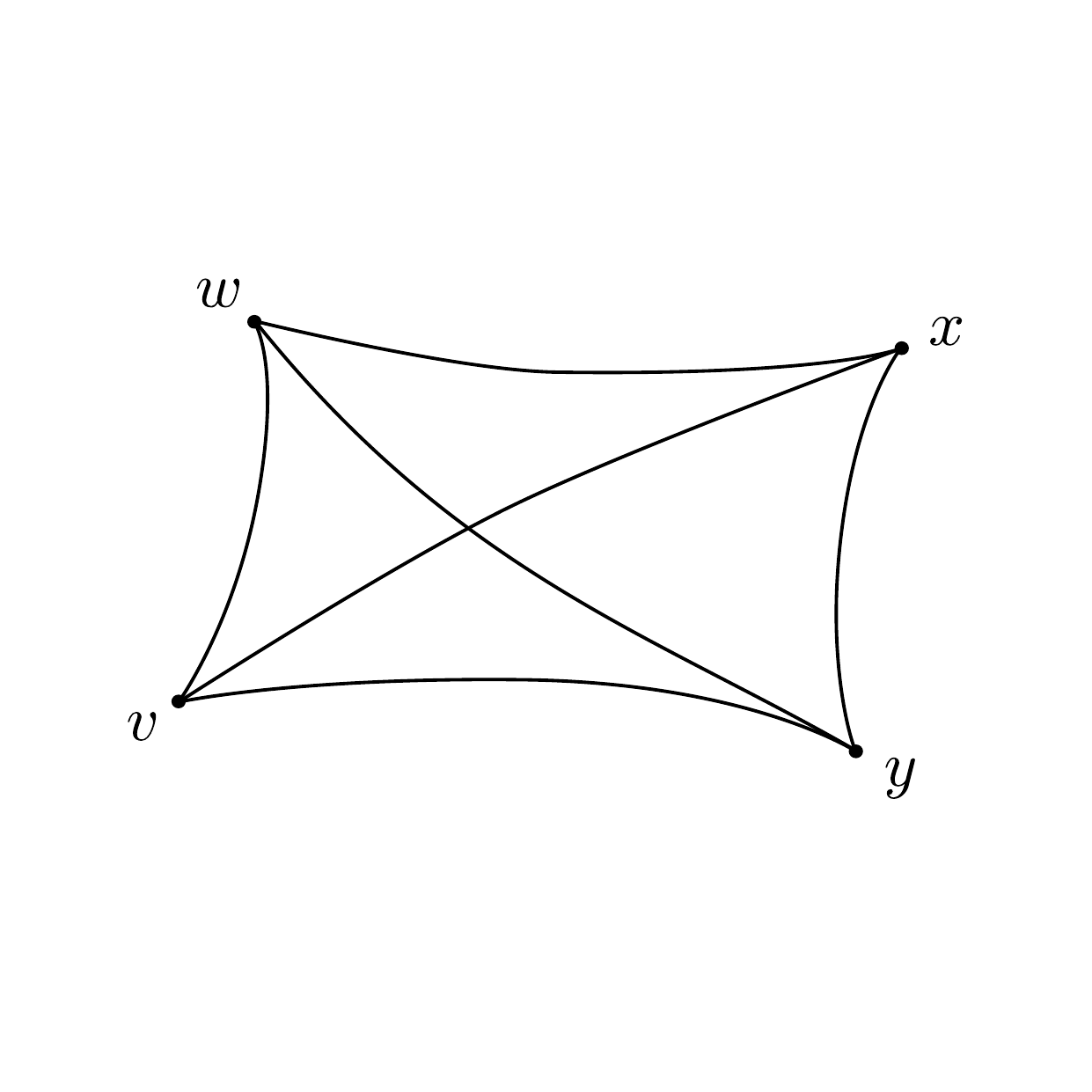}}\qquad

	\caption{\it Illustrations of Definition \ref{Def:CATk} and Proposition \ref{prop:quadcomp} \label{fig:npc-quad}}
\end{figure}

For $\kappa=0$, i.e. $C_{\kappa.\epsilon} = 1$, inequality (\ref{ineq:NPC}) is
also called the \emph{NPC inequality} in CAT(0)-spaces (standing for
nonpositive curvature). We also require the following inequality for Hadamard
spaces, see Figure \ref{fig:npc-quad}. 

\begin{proposition}[Reshetnyak's quadruple comparison]
	\label{prop:quadcomp}

	Let $(M,d)$ be a CAT(0) space. Then it holds for any $x,y,v,w \in M$ that
	\begin{align*}
		d^2(v,x) + d^2(w, y) \leq d^2(w, x) + d^2(v, y)
		+ 2 d(v, w) \cdot d(x, y).
	\end{align*}
\end{proposition}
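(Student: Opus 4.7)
The plan is to reduce this CAT(0) statement to its elementary counterpart in the Euclidean plane by invoking Reshetnyak's \emph{majorization} (or \emph{sub-embedding}) theorem, available in \cite{BH11}. The crucial step is selecting the correct cyclic ordering of the four points: I would sub-embed $v, w, x, y$ in this cyclic order as $\bar v, \bar w, \bar x, \bar y \in \RR^2$, so that $vw$ and $xy$ appear as opposite \emph{sides} of the associated geodesic quadrilateral (whose lengths are preserved by the sub-embedding), while $vx$ and $wy$ form its two \emph{diagonals} (whose distances in $M$ are dominated by their Euclidean counterparts). Concretely, the sub-embedding provides $\|\bar v - \bar w\| = d(v,w)$, $\|\bar w - \bar x\| = d(w,x)$, $\|\bar x - \bar y\| = d(x,y)$, $\|\bar y - \bar v\| = d(y,v)$, together with $d(v,x) \leq \|\bar v - \bar x\|$ and $d(w,y) \leq \|\bar w - \bar y\|$.

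Next I would verify the corresponding Euclidean inequality by expanding with the elementary algebraic identity
\[
\|\bar v - \bar x\|^2 + \|\bar w - \bar y\|^2 - \|\bar w - \bar x\|^2 - \|\bar v - \bar y\|^2 \;=\; 2\langle \bar v - \bar w,\; \bar y - \bar x\rangle,
\]
and applying the Cauchy-Schwarz inequality to bound the right-hand side by $2\|\bar v - \bar w\|\cdot\|\bar y - \bar x\|$. Substituting back---bounding the two diagonal distances in $M$ by their Euclidean counterparts on the left and recognizing the four matched side lengths on the right---directly yields the asserted inequality.

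The principal difficulty is purely conceptual: the cyclic ordering in the sub-embedding must be chosen so that $d(v,w)$ and $d(x,y)$ emerge as \emph{sides} rather than as \emph{diagonals}. With the alternative ordering $v, x, w, y$ one would end up with the potentially larger Euclidean lengths $\|\bar v - \bar w\|$ and $\|\bar x - \bar y\|$ on the right-hand side, yielding only a strictly weaker estimate, since in CAT(0) the sub-embedding increases diagonals while preserving sides. Once the correct ordering is fixed, the remainder is mechanical; one should also note that the planar quadrilateral may degenerate for special configurations, but Reshetnyak's theorem guarantees its existence for any finite-perimeter closed geodesic curve in a CAT(0) space, which is automatic here.
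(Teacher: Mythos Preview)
Your argument is correct. The paper does not give its own proof but merely cites \cite[Proposition 2.4]{Sturm}, so there is nothing to compare at the level of detail; your route via the four-point sub-embedding into $\RR^2$ followed by the polarization identity and Cauchy--Schwarz is the standard derivation and is essentially what lies behind Sturm's proposition. Your discussion of the cyclic ordering is exactly right: with the order $v,w,x,y$ the pairs $(v,w)$ and $(x,y)$ are opposite sides (lengths preserved) while $(v,x)$ and $(w,y)$ are diagonals (lengths non-decreased), which is precisely the direction needed. One small remark: what you actually invoke is the four-point sub-embedding characterization of CAT(0) rather than the full Reshetnyak majorization theorem for closed curves; the former is the cleaner reference here and is stated directly in, e.g., Sturm or \cite{BBI01}, whereas \cite{BH11} phrases Reshetnyak's theorem in the curve/convex-domain form, from which the four-point statement follows but is not immediate.
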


\begin{proof}
	\cite[Proposition 2.4]{Sturm}.
\end{proof}

%
%
%
%
%
%

\subsection{Angles, Space of Directions and Tangent Cone}

In differentiable manifolds, one can construct at each point a vector space by
considering two curves equivalent if they pass through the point with the same
direction and speed.\\ In general \cat-spaces, the equivalence classes of
curves do not necessarily form a vector space but rather a cone. Formally, the
\emph{tangent cone} at each point is constructed by considering the cone over
the \emph{space of directions} of geodesics passing through the point $x$. Two
geodesics are then said to have the same direction if the \emph{angle} between
them is 0.

\begin{definition}
	\label{def:alexangle}

	Let $(M,d)$ be a \cat~space, $\kappa \in \RR$, and for $i=1,2$ let
	$\gamma_i : [0,a_i] \to M$, be two geodesics with $\gamma_i(0) = x \in M$
	and $a_i >0$. Then their \emph{Alexandrov angle} at $x$ is given by
	\begin{align*}
		\angle_x(\gamma_1, \gamma_2) = \arccos \lim_{t\searrow 0}
		\left( 1 - \frac{d^2(\gamma_1(t), \gamma_2(t))}{2 t^2}\right).
	\end{align*}
\end{definition}

Taking, as usual, the principal branch of the arccos in $[0,2\pi)$, it turns out that the Alexandrov angle is between $0$ and $\pi$. In a Euclidean
space, it is just the classical angle between two straight rays emanating from
$x$. It is also very well-behaved in \cat~spaces as the following teaches.

\begin{lemma}
	\label{lem:Alexandrov-angle-triangle}

	If $(M,d)$ is a \cat~space for some $\kappa \in\RR$, the Alexandrov angle
	is well-defined, it is symmetric, nonnegative and satisfies the triangle
	inequality, i.e. for $i=1,2,3$ and three geodesics $\gamma_i: [0,a_i] \to
		M$ with $ \gamma_i(0) =  x \in M$ and $a_i >0$,
	$$ \angle_x(\gamma_1,\gamma_3) \leq \angle_x(\gamma_1,\gamma_2)+\angle_x(\gamma_2,\gamma_3)\,.$$
\end{lemma}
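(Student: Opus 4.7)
The plan is to verify the four claims (existence of the limit, symmetry, nonnegativity, triangle inequality) in turn. Symmetry is immediate from $d(\gamma_1(t), \gamma_2(t)) = d(\gamma_2(t), \gamma_1(t))$, and nonnegativity is automatic since $\arccos$ takes values in $[0,\pi]$. The substantive parts are the existence of the limit and the triangle inequality, both of which rest on the comparison triangles supplied by Lemma \ref{lem:reference-triangle} together with the CAT($\kappa$) inequality of Definition \ref{Def:CATk} and Proposition \ref{prop:catgeod}(iv).

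For well-definedness, I would choose $s, t > 0$ small enough that the triangle with vertices $x, \gamma_1(s), \gamma_2(t)$ has perimeter strictly below $2R_\kappa$, and let $\tilde\alpha_\kappa(s,t)$ denote the angle at the comparison vertex in $M_\kappa$. The key step is monotonicity: given $0 < s' \leq s$, mark the point $\tilde q$ on the side of the comparison triangle at distance $s'$ from the comparison vertex, apply the CAT($\kappa$) inequality to obtain $d(\gamma_1(s'), \gamma_2(t)) \leq d_\kappa(\tilde q, \tilde\gamma_2(t))$, and compare with a fresh $M_\kappa$-comparison triangle for $(x, \gamma_1(s'), \gamma_2(t))$ using monotonicity of the $\kappa$-law of cosines in the third side. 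This gives $\tilde\alpha_\kappa(s',t) \leq \tilde\alpha_\kappa(s,t)$, so $\tilde\alpha_\kappa$ is non-decreasing in each argument; being bounded in $[0,\pi]$ it converges monotonically as $s, t \searrow 0$. A Taylor expansion of the $\kappa$-law of cosines around the origin then shows that the Euclidean comparison angle $\tilde\alpha_0(s,t)$ used in the definition of $\angle_x(\gamma_1, \gamma_2)$ shares the same limit, so the defining $\arccos$-limit exists.

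The triangle inequality is the main obstacle. A naive use of the triangle inequality for $d$ applied to the three points $\gamma_i(t)$ translates, via $d(\gamma_i(t), \gamma_j(t)) = 2t\sin(\tilde\alpha_0(t,t)/2) + o(t)$, only into the chord inequality $\sin(\alpha_{13}/2) \leq \sin(\alpha_{12}/2) + \sin(\alpha_{23}/2)$ with $\alpha_{ij} := \angle_x(\gamma_i, \gamma_j)$, which is strictly weaker than what we need. I would instead argue by contradiction: assume $\alpha_{13} > \alpha_{12} + \alpha_{23}$, fix $\epsilon > 0$ with $\alpha_{13} > \alpha_{12} + \alpha_{23} + 3\epsilon$, and use the monotone convergence from the first step to select $s, t, r > 0$ small enough that the comparison angles $\tilde\alpha_\kappa^{(12)}(s,t)$ and $\tilde\alpha_\kappa^{(23)}(t,r)$ lie within $\epsilon$ of $\alpha_{12}$ and $\alpha_{23}$, while $\tilde\alpha_\kappa^{(13)}(s,r) \geq \alpha_{13}$. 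The crux is then Alexandrov's lemma (e.g., \cite[Lemma I.2.16]{BH11}), which glues the two comparison triangles $(\tilde x, \tilde\gamma_1(s), \tilde\gamma_2(t))$ and $(\tilde x, \tilde\gamma_2(t), \tilde\gamma_3(r))$ along the shared side $[\tilde x, \tilde\gamma_2(t)]$ into a single planar configuration whose apex angle is $\tilde\alpha_\kappa^{(12)}(s,t) + \tilde\alpha_\kappa^{(23)}(t,r)$; combined with the CAT($\kappa$) bound on $d(\gamma_1(s), \gamma_3(r))$ this yields $\tilde\alpha_\kappa^{(13)}(s,r) \leq \tilde\alpha_\kappa^{(12)}(s,t) + \tilde\alpha_\kappa^{(23)}(t,r)$, contradicting the hypothesis. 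The delicate point is the proper invocation of Alexandrov's lemma, which is the standard CAT($\kappa$) device for converting a pairwise distance comparison into an angular one.
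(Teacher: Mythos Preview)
Your argument is correct and follows the standard route found in the references the paper invokes. The paper's own proof of this lemma is essentially a citation: it verifies only that the argument of $\arccos$ lies in $[-1,1]$ (using the triangle inequality for $d$), then defers existence of the limit to \cite[Proposition 4.3.2]{burago} and the triangle inequality to \cite[Theorem I.1.14]{BH11}. You have instead sketched the content of those cited proofs---monotonicity of the $\kappa$-comparison angle under the CAT$(\kappa)$ inequality for existence, and the gluing argument via Alexandrov's lemma \cite[Lemma I.2.16]{BH11} for the triangle inequality---which is more informative but not a different approach. Your remark that the naive metric triangle inequality only yields the chord inequality $\sin(\alpha_{13}/2)\le \sin(\alpha_{12}/2)+\sin(\alpha_{23}/2)$ is a nice observation that explains why Alexandrov's lemma is genuinely needed.
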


\begin{proof}

	Firstly, we have $1 - d^2(\gamma_1(t), \gamma_2(t)) \leq 1$, and
	\begin{align*}
		1 - \frac{d^2(\gamma_1(t), \gamma_2(t))}{2 t^2} \leq 1\,,
	\end{align*}
	and
	\begin{align*}
		1 - \frac{d^2(\gamma_1(t), \gamma_2(t))}{2 t^2} & \geq 1 - \frac{\left(d(x, \gamma_2(t)) + d(x, \gamma_1(t))\right)^2}{2 t^2} \\
		                                                & =  1 - \frac{4 t^2}{2 t^2} = -1\,,                                          \\
	\end{align*}
	recalling that all of our geodesics have the same unit speed.
	Thus, the argument of $\arccos$ lies in $[-1,1]$ for all $0 \leq t \leq \min\{a_1, a_2\}$,
	and therefore the limit is in $[1,-1]$, if it exists; and,  indeed, it exists in \cat~spaces by \cite[Proposition 4.3.2]{burago}.
	Symmetry is obvious, and the triangle inequality has been shown in
	\cite[Theorem I.1.14]{BH11}.
\end{proof}

For a triangle in a \cat~space of suitably bounded perimeter, define in the
following the angles of a reference triangle in $(M_\kappa,d_\kappa)$.

\begin{definition}
	\label{def:compangle}

	Let $(M,d)$ be a \cat-space and $x,y,z \in M$ forming a triangle with
	perimeter less than $2 R_\kappa$. Then the \emph{comparison angle} of $y$
	and $z$ at $x$ is given by
	\begin{align*}
		\angle_x^\kappa(y,z)=
		\begin{cases}
			\arccos\left(\frac{\cosh(\sqrt{-\kappa}d(y,z) - \cosh(\sqrt{-\kappa}d(x,y)) \cosh({\sqrt{-\kappa}d(x,z)})}{\sinh(\sqrt{-\kappa}d(x,y)) \sinh({\sqrt{-\kappa}d(x,z)})}\right) \quad & \text{if } \kappa < 0, \\
			\arccos\left(\frac{d^2(x,y) + d^2(x,z) - d^2(y,z)}{2 d(x,y) d(x,z)}\right) \quad                                                                                                   & \text{if } \kappa = 0, \\
			\arccos\left(\frac{\cos(\sqrt{\kappa}d(y,z) - \cos(\sqrt{\kappa}d(x,y)) \cos({\sqrt{\kappa}d(x,z)})}{\sin(\sqrt{\kappa}d(x,y)) \sin({\sqrt{\kappa}d(x,z)})}\right) \quad           & \text{if } \kappa > 0. \\
		\end{cases}
	\end{align*}

\end{definition}

These angles form an upper bound, decreasing in the sense of the following
proposition.

\begin{proposition}
	\label{prop:anglelimit}

	Let $(M,d)$ be a \cat-space and for $i=1,2$ let $\gamma_i : [0,a_i] \to M$
	be two geodesics with $\gamma_i(0) = x \in M$ and $a_i >0$. Then, we have
	\begin{align*}
		\angle_{x}^\kappa(\gamma_1(t),\gamma_2(t)) \leq \angle_{x}^\kappa(\gamma_1(t^\prime),\gamma_2(t^\prime)) \mbox{ for }0 \leq t < t^\prime \leq \min\{a_1,a_2\}\,,
	\end{align*}
	as well as
	\begin{align*}
		\angle_{x}(\gamma_1,\gamma_2) = \lim_{t \searrow 0} \angle_{x}^\kappa(\gamma_1(t),\gamma_2(t)).
	\end{align*}
\end{proposition}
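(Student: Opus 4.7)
The plan is to derive monotonicity directly from the defining CAT($\kappa$) inequality, and to identify the limit by a second-order Taylor expansion of the comparison-angle formulas of Definition \ref{def:compangle}, which asymptotically match the expression inside the $\arccos$ in Definition \ref{def:alexangle}.

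For the monotonicity, I would fix $0 \leq t < t' \leq \min\{a_1, a_2\}$ and work with the triangle $(x, \gamma_1(t'), \gamma_2(t'))$, whose perimeter we may assume is at most $2R_\kappa$ (automatic if $\kappa \leq 0$, and tacitly assumed in the setting when $\kappa > 0$). By Lemma \ref{lem:reference-triangle} this triangle admits a reference triangle $(x', y', z')$ in $M_\kappa$. Let $p'$ on the $M_\kappa$-segment $[x', y']$ and $q'$ on $[x', z']$ be the unique points with $d_\kappa(x', p') = d_\kappa(x', q') = t$. Two applications of Definition \ref{Def:CATk} (first to the subtriangle $(\gamma_2(t'), x, \gamma_1(t'))$, yielding $d(\gamma_1(t), \gamma_2(t')) \leq d_\kappa(p', z')$, and then to the triangle with the freshly bounded side and the geodesic from $x$ to $\gamma_2(t')$ carrying $\gamma_2(t)$) produce $d(\gamma_1(t), \gamma_2(t)) \leq d_\kappa(p', q')$. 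Since the comparison-angle formula in Definition \ref{def:compangle} is a monotonically increasing function of the opposite side length when the two adjacent sides are held fixed (a short calculus check in each of the three cases for $\kappa$), and since the intrinsic $M_\kappa$-angle at $x'$ between $p'$ and $q'$ coincides with that between $y'$ and $z'$ (they lie on the same rays from $x'$), we obtain
\begin{align*}
\angle_x^\kappa(\gamma_1(t), \gamma_2(t)) \leq \angle_{x'}(p', q') = \angle_{x'}(y', z') = \angle_x^\kappa(\gamma_1(t'), \gamma_2(t')).
\end{align*}

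For the limit identification, set $d_t := d(\gamma_1(t), \gamma_2(t))$, note that $d_t \leq 2t$ by the triangle inequality, and expand $\cos, \sin, \cosh, \sinh$ to fourth order around $0$ in each of the three formulas of Definition \ref{def:compangle}. A direct computation gives, uniformly in the available data,
\begin{align*}
\cos\bigl(\angle_x^\kappa(\gamma_1(t), \gamma_2(t))\bigr) = 1 - \frac{d_t^2}{2 t^2} + O(t^2) \quad \text{as } t \searrow 0,
\end{align*}
the $O(t^2)$ remainder absorbing all curvature-dependent corrections. The just-established monotonicity guarantees that the decreasing limit $\lim_{t \searrow 0} \angle_x^\kappa(\gamma_1(t), \gamma_2(t))$ exists; by continuity of $\arccos$ and the expansion above, this limit equals $\arccos \lim_{t \searrow 0} (1 - d_t^2 / (2t^2)) = \angle_x(\gamma_1, \gamma_2)$, which also re-confirms the existence of the limit in Definition \ref{def:alexangle}.

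The main obstacle I anticipate is the careful invocation of the CAT($\kappa$) inequality twice in order to pass from its literal ``point on one side versus opposite vertex'' form to the ``point on one side versus point on another side'' estimate $d(\gamma_1(t), \gamma_2(t)) \leq d_\kappa(p', q')$ needed for the monotonicity; getting the right auxiliary triangles and orientations straight is the only subtle step. The Taylor expansion is mechanical in the case $\kappa = 0$, but for $\kappa \neq 0$ one must verify that the denominators $\sin(\sqrt{\kappa} t) \sin(\sqrt{\kappa} t)$ (resp.\ $\sinh$) contribute only $t^2 (1 + O(t^2))$, so that the curvature-induced remainder genuinely vanishes at order $t^2$ and uniformly along the geodesics.
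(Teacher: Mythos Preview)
The paper does not prove this proposition at all: its entire proof is the citation ``\cite[Proposition II.3.1]{BH11}''. Your outline is the standard argument behind that reference and is essentially correct, so there is nothing substantive to compare.

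One small comment on the monotonicity step: the phrase ``two applications of Definition \ref{Def:CATk} \ldots\ produce $d(\gamma_1(t),\gamma_2(t)) \le d_\kappa(p',q')$'' hides a third ingredient. After the first CAT($\kappa$) comparison gives $d(\gamma_1(t),\gamma_2(t')) \le d_\kappa(p',z')$, the second one must be applied to the \emph{new} triangle $(x,\gamma_1(t),\gamma_2(t'))$, whose $M_\kappa$-reference triangle $(\bar x,\bar p,\bar z)$ is in general not congruent to $(x',p',z')$; it only satisfies $d_\kappa(\bar p,\bar z)\le d_\kappa(p',z')$. So you get $d(\gamma_1(t),\gamma_2(t)) \le d_\kappa(\bar p,\bar q)$, and a separate model-space monotonicity (the law of cosines in $M_\kappa$, which you already invoke) is then needed to pass from $d_\kappa(\bar p,\bar q)$ to $d_\kappa(p',q')$. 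You clearly know this, but when you write it up make the intermediate reference triangle explicit. The Taylor-expansion argument for the limit is fine.
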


\begin{proof}

	\cite[Proposition II.3.1]{BH11}.
\end{proof}

\begin{definition}
	\label{def:spaceofdir}

	Let $(M,d)$ be a \cat~space for some $\kappa\in \RR$ and let $x \in M$.
	For $i=1,2$, two geodesics $\gamma_i : [0,a_i] \to M$ starting at
	$\gamma_i(0)=x$, $a_i>0$ have the same \emph{direction at $x$} if
	$\angle_x(\gamma_1, \gamma_2) = 0$. Then write $\beta \sim \gamma$. The
	\emph{space of directions at $x$} consists of equivalence classes
	$[\gamma] = \{\beta: \beta \sim\gamma\}$ of geodesics starting at $x$,
	i.e.
	\begin{align*}
		\Sigma_x M = \{[\gamma]:
		\text{$\gamma$ is a geodesic starting at $\gamma(0)=x$} \} \,.
	\end{align*}
	Further, the \emph{tangent cone} $T_x M$ at $x$ is the Euclidean cone over
	the space of directions at $x$, cf. Definition \ref{Def:Cones}, i.e.
	\begin{align*}
		T_x M = C_0 \left( \Sigma_x M \right).
	\end{align*}
	We simply write $\Sigma_x = \Sigma_x M$ and $T:x=T_x M$ if $M$ is self-understood.
\end{definition}

The following lemma shows that the Alexandrov angle is a metric for the space
of directions and thus the Euclidean tangent cone is well-defined. In slight
abuse of notation, for $\sigma,\tau \in \Sigma_x$, define $$\angle_x(\sigma,
	\tau) := \angle_x(\beta, \gamma)$$ where $\beta, \gamma$ are representatives
of $\sigma,\tau$, respectively. Indeed, the r.h.s. above is independent of the
representative chosen, for if $\delta \sim \gamma$, then $$\angle_x(\beta,
	\gamma) \leq \angle_x(\beta, \delta) \leq \angle_x(\beta, \gamma)$$ due to the
triangle inequality from Lemma \ref{lem:Alexandrov-angle-triangle}.

\begin{lemma} 
	\label{lemma:directions_cat}

	Let $(M,d)$ be a \cat~space for some $\kappa\in \RR$ and let $x \in M$. Then, $(\Sigma_x M, \angle_x)$
	is a well-defined metric space. Further, its completion is a CAT(1) space
	and the completion of the tangent cone is a CAT(0) space, i.e. a Hadamard space.
\end{lemma}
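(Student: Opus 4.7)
The plan is to check the metric axioms for $\angle_x$ on $\Sigma_x M$, then verify the CAT(1) condition on $\Sigma_x M$ itself and extend it to the completion by density, and finally invoke Berestovskii's theorem on Euclidean cones over CAT(1) spaces to conclude.

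First I would verify that $(\Sigma_x M, \angle_x)$ is a genuine metric space. The remark preceding the lemma shows that $\angle_x$ descends to equivalence classes. Symmetry, nonnegativity, and the triangle inequality are then immediate from Lemma \ref{lem:Alexandrov-angle-triangle}, and the separation axiom $\angle_x(\sigma,\tau)=0 \Rightarrow \sigma=\tau$ is exactly the definition of the equivalence relation generating $\Sigma_x M$. Since Alexandrov angles lie in $[0,\pi]$, $(\Sigma_x M, \angle_x)$ is moreover bounded.

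Next I would show that the CAT(1) inequality holds on $\Sigma_x M$, whence it transfers to $\cl(\Sigma_x M)$ by continuity of the cone metric and a density argument. Fix three directions $[\gamma_1],[\gamma_2],[\gamma_3]$ whose pairwise angles sum to at most $2\pi$. For each small $t>0$ the points $\gamma_i(t)$ form a triangle in $M$ of perimeter at most $6t$, so for $t$ small enough we stay in the CAT($\kappa$) regime and, by Proposition \ref{prop:catgeod}(i), can join $\gamma_i(t)$ and $\gamma_j(t)$ by a unique geodesic $\gamma_{ij,t}$. Its rescaling $s\mapsto \gamma_{ij,t}(st)$ defines an approximating direction at $x$, whose angles with the $[\gamma_i]$ are controlled by the comparison angles $\angle_x^\kappa(\gamma_i(t),\gamma_j(t))$. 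By Proposition \ref{prop:anglelimit} these are monotonically decreasing in $t$ and converge to $\angle_x(\gamma_i,\gamma_j)$ as $t\searrow 0$, so the limit directions (existing in $\cl(\Sigma_x M)$) realize the midpoints needed for the spherical comparison. Building a reference triangle on $\mathbb{S}^2$ with sides $\angle_x([\gamma_i],[\gamma_j])$ and reading off the distances of the corresponding subdivisions through the limiting procedure yields the CAT(1) comparison inequality on $\Sigma_x M$, which persists in its completion.

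Finally, for the tangent cone I would use that the cone metric of Definition \ref{Def:Cones} depends uniformly continuously on the truncated base distance $d_\pi$, so a Cauchy sequence in $T_x M=C_0(\Sigma_x M)$ either concentrates near the cone point $\mathcal{O}$ or lifts to a Cauchy sequence in $\Sigma_x M$. Thus the completion of $T_x M$ is canonically $C_0(\cl(\Sigma_x M))$. Berestovskii's theorem (Proposition I.5.10 and Theorem II.3.14 of \cite{BH11}) says that the Euclidean cone over a complete CAT(1) space is a complete CAT(0) space, which immediately gives the Hadamard conclusion.

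The principal obstacle is the CAT(1) verification for $\Sigma_x M$: constructing midpoints of directions as suitably rescaled limits of geodesic midpoints in $M$, and showing that these limits satisfy the spherical comparison bound. This is where the monotonicity of comparison angles from Proposition \ref{prop:anglelimit} and the fact that such limits generally exist only after passing to $\cl(\Sigma_x M)$ do the real work, with the rest of the argument being a clean combination of cone-metric continuity and Berestovskii's theorem.
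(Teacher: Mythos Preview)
Your treatment of the metric-space axioms matches the paper exactly: both invoke Lemma~\ref{lem:Alexandrov-angle-triangle} and the definition of the equivalence relation.

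For the CAT(1) and CAT(0) assertions, the paper does not argue at all but simply cites \cite[Theorem II.3.19]{BH11}, which packages exactly the statement needed (completion of the space of directions is CAT(1), hence the tangent cone is CAT(0) via Berestovskii). You instead attempt to sketch the proof of that theorem, which is a legitimate and more self-contained route, and your final step (Berestovskii, \cite[Theorem II.3.14]{BH11}) is correct.

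However, your CAT(1) argument contains a genuine gap. You write that the rescaling $s\mapsto \gamma_{ij,t}(st)$ of the geodesic joining $\gamma_i(t)$ and $\gamma_j(t)$ ``defines an approximating direction at $x$''. It does not: $\gamma_{ij,t}$ does not issue from $x$, so no reparametrization of it lies in $\Sigma_x$. What one actually needs is the direction at $x$ toward a point \emph{on} $\gamma_{ij,t}$ --- e.g.\ the geodesic $\gamma_x^{m_t}$ from $x$ to the midpoint $m_t$ of $\gamma_i(t)$ and $\gamma_j(t)$ --- and then to show that these directions form a Cauchy family as $t\searrow 0$ whose limit in $\cl(\Sigma_x)$ is the desired midpoint. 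This is precisely the delicate step in \cite[Theorem II.3.19]{BH11}, and it requires the Alexandrov lemma and a careful comparison of the nested triangles $(x,\gamma_i(t),m_t)$, not merely the monotonicity of comparison angles from Proposition~\ref{prop:anglelimit}. Relatedly, you cannot first establish CAT(1) on $\Sigma_x$ and then pass to the completion by density: $\Sigma_x$ need not be $\pi$-geodesic, so the comparison inequality has no content there; one must work in $\cl(\Sigma_x)$ from the outset. With these corrections your outline becomes the standard proof, but as written the midpoint construction is incorrect.
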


\begin{proof}

	The triangle inequality for the Alexandrov angle from Lemma
	\ref{lem:Alexandrov-angle-triangle} ensures that $\sim$ is indeed an
	equivalence relation and thus Lemma \ref{lem:Alexandrov-angle-triangle}
	yields the first assertion. For the second part, see \cite[Theorem II.3.19]{BH11}.
\end{proof}

The completion of $\Sigma_x$ will be denoted by $\cl(\Sigma_x)$ and $\cl(T_x)$ denotes the completion of $T_x$

%
%
%

Under additional assumptions, the space of directions and the tangent cone both inherit many
desirable properties from the original space.


\begin{definition}

	Let $(M,d)$ be an $r$-geodesic space, $r \in (0, \infty]$. A geodesic
	$\gamma: [a,b] \to M$ is called \emph{extendible} if there is another
	geodesic $\gamma^\prime : [a^\prime, b^\prime] \to M$ with $a^\prime < a$,
	$b < b^\prime$, and $\gamma = \gamma^\prime \lvert_{[a,b]}$.

	The space $(M,d)$ is called \emph{geodesically complete} if it is complete
	and any geodesic is extendible.
\end{definition}



\begin{proposition} 
	\label{prop:cat_dir}
	Let $(M,d)$ be a locally compact and geodesically complete \cat~space for some $\kappa\in \RR$ and let $x \in M$. Then,
	\begin{enumerate}
		\item[(i)] the space of directions $\Sigma_x$ is a compact, separable,
		      metrically and geodesically complete CAT(1) space, and
		\item[(ii)] the tangent cone $T_x$ is a locally compact, separable and
		      geodesically complete Hadamard space.
	\end{enumerate}
\end{proposition}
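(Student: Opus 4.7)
The plan is to leverage Lemma \ref{lemma:directions_cat}, which has already established that $\cl(\Sigma_x)$ is CAT(1) and $\cl(T_x)$ is a Hadamard space. What remains is to show that local compactness and geodesic completeness of $M$ promote $\Sigma_x$ and $T_x$ themselves to having the stronger properties listed, namely that these spaces coincide with their completions and additionally satisfy the compactness, separability, and geodesic completeness claims.

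For part (i), I would realize $\Sigma_x$ as a continuous image of a small metric sphere in $M$. By local compactness, pick $r \in (0, R_\kappa)$ such that $\overline{B_r(x)}$ is compact, making the metric sphere $S_r(x) := \{y \in M : d(x,y) = r\}$ compact as well. By Proposition \ref{prop:catgeod}(i), each $y \in S_r(x)$ is joined to $x$ by a unique geodesic $\gamma_y$; by geodesic completeness of $M$, every direction $[\gamma] \in \Sigma_x$ admits a representative extendable to reach some $y \in S_r(x)$. Hence $\pi : S_r(x) \to \Sigma_x$, $y \mapsto [\gamma_y]$, is a well-defined surjection. Proposition \ref{prop:anglelimit} together with the continuity of the comparison-angle formula in Definition \ref{def:compangle} shows that $\pi$ is continuous: if $y_n \to y$ in $S_r(x)$, then $\angle_x(\gamma_{y_n}, \gamma_y) \leq \angle_x^\kappa(y_n, y) \to 0$. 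Consequently $\Sigma_x$ is compact, hence separable and complete; Lemma \ref{lemma:directions_cat} then upgrades the CAT(1) property from $\cl(\Sigma_x)$ to $\Sigma_x = \cl(\Sigma_x)$. Geodesic completeness of $\Sigma_x$ follows from compactness together with the observation that every $[\gamma] \in \Sigma_x$ admits an antipodal direction in $\Sigma_x$, obtained by extending $\gamma$ backwards through $x$ using geodesic completeness of $M$; this provides the \emph{room} needed to extend CAT(1) geodesics in $\Sigma_x$ past their endpoints, following the argument of Theorem II.3.19 in \cite{BH11}.

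Part (ii) follows by transferring these properties through the Euclidean cone construction $T_x = C_0(\Sigma_x)$. Any closed ball $\overline{B_R(\mathcal{O})} \subset T_x$ is the quotient image of the compact product $\Sigma_x \times [0, R]$ under the cone map, and hence compact, yielding local compactness of $T_x$. Separability and metric completeness of $T_x$ follow directly from the cone distance formula in Definition \ref{Def:Cones} together with the corresponding properties of $\Sigma_x$, while the CAT(0) property is inherited from $\cl(T_x)$ via Lemma \ref{lemma:directions_cat}. Finally, geodesic completeness of $T_x$: geodesics avoiding $\mathcal{O}$ extend by elementary planar geometry within the Euclidean cone over a geodesic segment of $\Sigma_x$, while geodesics through $\mathcal{O}$ are extended using the antipodal directions on the other side of $\mathcal{O}$ established above.

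The main obstacle is the geodesic extendibility of $\Sigma_x$. The Alexandrov angle is defined via an asymptotic limit along shrinking representatives, so extending a geodesic in the space of directions is not a formal consequence of extending a geodesic in $M$. The cleanest route is either to identify $\Sigma_x$ with the Gromov--Hausdorff tangent of rescaled pointed spaces $(M, \lambda d, x)$ as $\lambda \to \infty$ (which itself requires both hypotheses) or to run the induction in \cite[Chapter II.3]{BH11}, exploiting compactness together with the antipodal structure just established.
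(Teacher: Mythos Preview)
The paper does not prove this proposition at all: its proof consists solely of the citation ``\cite[Corollary 5.7 and 5.8]{LN18}''. So your proposal is not a different route to the same proof but rather an attempt to supply an argument where the paper simply appeals to the literature.

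Your compactness argument for $\Sigma_x$ is correct and clean: the map $\pi: S_r(x) \to \Sigma_x$ is indeed a continuous surjection from a compact space, using local compactness for compactness of $S_r(x)$, geodesic completeness (plus a Hopf--Rinow type extension) for surjectivity, and Proposition~\ref{prop:anglelimit} for continuity. Separability and metric completeness then follow, and Lemma~\ref{lemma:directions_cat} gives the CAT(1) property. The deduction of (ii) from (i) via the cone construction is also essentially right.

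The genuine gap, which you yourself flag, is the geodesic completeness of $\Sigma_x$. The existence of antipodes for every direction does \emph{not} by itself yield extendibility of geodesics in a compact CAT(1) space, and Theorem II.3.19 in \cite{BH11} concerns the CAT(1) property of the completion, not geodesic completeness. This is precisely the nontrivial content of the Lytchak--Nagano result the paper cites; it relies on their structure theory for GCBA spaces (spaces with geodesically complete, locally compact upper curvature bounds), in particular their analysis of tangent cones as blow-up limits and the resulting local conicality. Your two suggested routes in the final paragraph (Gromov--Hausdorff tangent or the Chapter II.3 induction) are in the right direction but would each require substantial further work to make rigorous. Given that the paper is content to cite \cite{LN18}, the honest move is to do the same for geodesic completeness while keeping your self-contained argument for compactness if you wish.
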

\begin{proof}
	\cite[Corollary 5.7 and 5.8]{LN18}.
\end{proof}

\section{Directional Stickiness and the Other Flavors in \cat~Spaces}\label{scn:dir-sticky}

For this entire section -- and the rest of this paper -- we assume that
\begin{center}
	$(M,d)$ is a \cat~space with $\kappa \in \RR$.
\end{center}
For specific results, later in this section we introduce the more restricted Scenario \ref{set:2} on page \pageref{set:2} and Scenario \ref{set:3} on page \pageref{set:3}.

Now, we first introduce additional notation for working on the space of directions and tangent cone  (for its completions, cf. Lemma \ref{lemma:directions_cat}), and a well known result on existence of Fr\'echet means.

\begin{definition}

	Let 
	$x,y \in M$, with $d(x,y) = l < R_\kappa$
	and $\sigma \in \Sigma_x$. Then
	\begin{enumerate}
		\item  $\gamma_x^y : [0, l] \to M$ denotes the unique (see Proposition \ref{prop:catgeod} (i)) geodesic (recall that all of our geodesics have unit speed) with
		      $\gamma_x^y(0) = x$ and $\gamma_x^y(l) = y$,
		\item define
		      \begin{align*}
			      \dir_x : \   B_{R_\kappa}(x)\setminus \{ x \} & \to \Sigma_x          \\
			      z                                             & \mapsto [\gamma_x^z],
		      \end{align*}
		      and 
		      the \emph{log-map at $x$} by
		      \begin{align*}
			      \log_x : \   B_{R_\kappa}(x) & \to        \cl(T_x) \\
			      z                            & \mapsto \left\{
			      \begin{array}{ll}
				      \big (\dir_x(z), d(x,z) \big ) & \quad \text{if } z \neq x, \\
				      \mathcal{O}                    & \quad \text{if } z = x,
			      \end{array}
			      \right.
		      \end{align*}
		\item for $\prb \in \cP(M)$,
		      $\widetilde{\prb} = \log_x \# \prb$ denotes the pushforward
		      measure under $\log_x$, which is a probability measure on $\cl(T_x)$ if $\prb(B_{R_\kappa}(x))=1$,
		\item for brevity's sake write $\angle_x(\sigma, y) = \angle_x(\sigma,
			      \dir_x(y))$.
	\end{enumerate}
\end{definition}

Below, we cite the well known result that uniqueness of the Fr\'echet mean is guaranteed for complete \cat ~spaces if either
$\kappa \leq 0$, or for $\kappa >0$, if the distribution is supported by an open
geodesic half ball.
For $q \geq 1$, $x \in M$ and $\kappa \in \RR$, let
\begin{align*}
	\Qkap[q] = \Big \{ \prb \in \wst[q]{M} \ : \ \supp(\prb) \subset \Bkap \Big \},
\end{align*}
where we use the convention $B_{\infty /2}(x) = M$.

\begin{proposition} 
	\label{prop:uniquefrech}
	Let $(M,d)$ be a \cat ~space for
	some $\kappa \in \RR$, $x \in M$ and $\prb\in \Qkap$. Then $\prb$ has a unique
	Fr\'echet mean $\mu \in \Bkap$, i.e. $\fb(\prb) = \{\mu\}$.

\end{proposition}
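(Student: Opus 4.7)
The plan is to derive the result from strict convexity of $F_\prb$ on a suitable geodesically convex region together with completeness of $(M,d)$, which jointly yield both existence of a minimizer via a Cauchy-sequence argument and uniqueness. By Proposition \ref{prop:catgeod}(iii) I may rescale so that $\kappa \in \{-1,0,1\}$. For $\kappa \leq 0$, we have $R_\kappa = \infty$, $\Bkap = M$, and inequality (\ref{ineq:NPC}) gives global strict convexity of $F_\prb$ with constant $C_{\kappa,\epsilon} = 1$; this is Sturm's classical argument for Hadamard spaces, and I would simply invoke it. The substantive case is thus $\kappa > 0$.

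For $\kappa > 0$, I would first confine candidate minimizers to $\overline{\Bkap}$. Since closed balls of radius less than $R_\kappa/2$ are geodesically convex in \cat~spaces and admit a 1-Lipschitz nearest-point projection onto their closure under the half-ball condition, projecting any $y \notin \overline{\Bkap}$ onto $\overline{\Bkap}$ cannot increase $d(\cdot,z)$ for any $z \in \supp(\prb) \subset \Bkap$, hence cannot increase $F_\prb$. Consequently, it suffices to minimize over $\overline{\Bkap}$.

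Next, I would obtain strict convexity quantitatively. Pick $\epsilon > 0$ small enough that $U_\epsilon := \overline{B_{R_\kappa/2 - \epsilon}(x)}$ carries most of the mass of $\prb$ (by tightness, $\prb(U_\epsilon) \to 1$ as $\epsilon \downarrow 0$). For $y_1, y_2 \in \overline{\Bkap}$ and $z \in U_\epsilon$, after a small further shrinking so the three points share a common set of diameter $\leq R_\kappa/2 - \epsilon$, Proposition \ref{prop:catgeod}(iv) gives
\[
d^2(z,\gamma_{y_1}^{y_2}(tl)) \leq (1-t) d^2(z,y_1) + t\, d^2(z,y_2) - C_{\kappa,\epsilon}\, t(1-t) d^2(y_1,y_2), \qquad l = d(y_1,y_2).
\]
Integrating against $\prb$ and controlling the tail $M \setminus U_\epsilon$ via the triangle-inequality bound from Remark \ref{rmk:Frechet-diff} yields strict convexity of $F_\prb$ on $\overline{\Bkap}$ with a positive strict-convexity modulus.

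The conclusion is then routine. A minimizing sequence $(y_n) \subset \overline{\Bkap}$, evaluated at $t = 1/2$ via the midpoint $m_{n,k}$, satisfies $\inf F_\prb \leq F_\prb(m_{n,k}) \leq \tfrac{1}{2}(F_\prb(y_n) + F_\prb(y_k)) - c\, d^2(y_n, y_k)$ for some $c > 0$, so $(y_n)$ is Cauchy; completeness yields $\mu \in \overline{\Bkap}$, continuity of $F_\prb$ identifies $\mu$ as a minimizer, and strict convexity precludes a second one. The refinement $\mu \in \Bkap$ follows since, should $d(x,\mu) = R_\kappa/2$, shifting $\mu$ along the geodesic toward $x$ would strictly decrease $d(\mu, z)$ for all $z \in \supp(\prb) \subset \Bkap$. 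The main obstacle is the diameter bookkeeping: Proposition \ref{prop:catgeod}(iv)'s condition $\diam(U) \leq R_\kappa/2 - \epsilon$ must hold simultaneously for the support points, the iterates $y_n$, and their midpoints, which is handled by combining the projection-based confinement with tightness of $\prb$.
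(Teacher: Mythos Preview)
The paper's own proof is a bare citation: \cite[Proposition 4.3]{Sturm} for $\kappa\le 0$ and \cite[Theorem B]{yokota} for $\kappa>0$. Your treatment of $\kappa\le 0$ amounts to the same citation.

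For $\kappa>0$ your direct argument has a genuine gap in precisely the place you flag as ``the main obstacle''. You confine candidate minimizers $y_1,y_2$ to $\overline{\Bkap}$ (radius $R_\kappa/2$ around $x$) and support points $z$ to $U_\epsilon=\overline{B_{R_\kappa/2-\epsilon}(x)}$. But then the triple $\{y_1,y_2,z\}$ lies only in a set of diameter at most $R_\kappa-\epsilon$, which is \emph{twice} the bound $R_\kappa/2-\epsilon$ that Proposition~\ref{prop:catgeod}(iv) demands. Your sentence ``after a small further shrinking so the three points share a common set of diameter $\le R_\kappa/2-\epsilon$'' is not achievable from what you have set up: projection-based confinement bounds $d(y_i,x)$, and tightness of $\prb$ bounds $d(z,x)$, but neither bounds $d(y_i,z)$ below $R_\kappa-\epsilon$ in general.

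This is not a detail that fills in routinely; it is exactly why the $\kappa>0$ case is credited to Yokota rather than derived from the Ohta inequality underlying Proposition~\ref{prop:catgeod}(iv). Yokota's argument replaces $d^2$ by a $\kappa$-adapted function (essentially $1-\cos(\sqrt{\kappa}\,d)$) whose strict convexity along geodesics holds on the full ball $B_{R_\kappa}$ without the half-radius diameter constraint, and then transfers the conclusion back to the original Fr\'echet function. If you wish to avoid citing Yokota, you would need to reproduce that mechanism rather than push harder on Proposition~\ref{prop:catgeod}(iv).
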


\begin{proof}
	For CAT(0), \cite[Proposition 4.3]{Sturm} in conjunction with Proposition \ref{prop:catgeod} and
	\cite[Theorem B]{yokota} for CAT($\kappa$) spaces with $\kappa >0$.
\end{proof}



%
%
%

\subsection{Directional Stickiness and the Pull}\label{scn:pull}

Directional stickiness will be defined via directional derivatives of the Fr\'echet function, and we will see in Theorem \ref{theorem:mainthm} that stickiness occurs if the directional derivatives are all \emph{positive}.

\begin{definition} 
	Let 
	$x \in M$ and $\sigma \in \Sigma_x$ with representative $\gamma \in \sigma$. For a function $f: M \to
		\RR$ define the directional derivative of $f$ at $x$ in direction $\sigma$ by
	\begin{align*}
		\nabla_\sigma f(x) = \frac{\diff}{\diff t} \Big \lvert_{t=0} f
		\circ \gamma(t)
	\end{align*}
	if it exists and is well-defined.
\end{definition}

Recall from Remark \ref{rmk:Frechet-diff} that for $\prb\in \wst{M}$ the Fr\'echet difference $x \mapsto F^y_P(x)$ is well-defined for any fixed $y\in M$ and that its derivatives not involving $y$ agree with the corresponding derivatives of $F_\prb$ for $\prb\in \wst[2]{M}$. Hence, for notational convenience, in slight abuse of notation, we write $\nabla_\sigma 	F_\prb$, also for $\prb \in \wst{M}$, when actually $\nabla_\sigma F^y_\prb$ for suitable $y\in M$ is meant.

\begin{theorem} 
	\label{theorem:pull}

	Let 
	$x\in M$ and $\prb \in \wst[1]{M}$ with $\prb(B_{R_\kappa}(x)) = 1$.
	Then $\nabla_\sigma F_\prb(x)$ is well-defined for all $\sigma \in \Sigma_x$, and of form $$ \nabla_\sigma F_\prb(x) = - \int_M d(x, z) \cdot \cos\left( \angle_x(\sigma, 			z)\right)\diff \prb(z) . $$
\end{theorem}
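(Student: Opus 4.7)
The plan is to commute the $t$-derivative with the $\prb$-integration in the Fr\'echet difference $F_\prb^y$ (Remark~\ref{rmk:Frechet-diff}) via dominated convergence; since the $y$-term inside the integral is $t$-independent, the resulting derivative depends only on $\sigma$ and $\prb$ and agrees with the quantity we denote $\nabla_\sigma F_\prb(x)$ even when $\prb\in\wst{M}$. Fix any geodesic representative $\gamma$ of $\sigma$ and, for small $t>0$, consider the difference quotient
$$q_t(z) := \frac{d^2(\gamma(t),z)-d^2(x,z)}{2t}, \qquad z \in M.$$

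The first task is pointwise convergence: for every $z\in B_{R_\kappa}(x)$,
$$\lim_{t\searrow 0} q_t(z) = -d(x,z)\cos\bigl(\angle_x(\sigma,z)\bigr).$$
Factor $d^2(\gamma(t),z)-d^2(x,z)=(d(\gamma(t),z)-d(x,z))(d(\gamma(t),z)+d(x,z))$. Continuity of the distance sends the second factor to $2d(x,z)$, so the task reduces to the first variation formula
$$\lim_{t\searrow 0}\frac{d(\gamma(t),z)-d(x,z)}{t} = -\cos\bigl(\angle_x(\sigma,z)\bigr).$$
For $z\neq x$ and $t$ small, Lemma~\ref{lem:reference-triangle} gives a reference triangle for $x,\gamma(t),z$ in $M_\kappa$; applying the corresponding (hyperbolic, Euclidean, or spherical) law of cosines from Definition~\ref{def:compangle} to solve for $d(\gamma(t),z)$, Taylor-expanding in $t$, and using Proposition~\ref{prop:anglelimit}, which asserts $\angle_x^\kappa(\gamma(t),z)\searrow \angle_x(\sigma,z)$ as $t\searrow 0$, yields the claimed limit. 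For $z=x$ both sides vanish trivially ($q_t(x)=t/2$ and $d(x,x)\cos(\cdot)=0$).

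For the dominated convergence step, the triangle inequality gives
$$|d^2(\gamma(t),z)-d^2(x,z)| \leq d(\gamma(t),x)\bigl(d(\gamma(t),z)+d(x,z)\bigr) \leq t\bigl(2d(x,z)+t\bigr),$$
so $|q_t(z)|\leq d(x,z)+t/2$, which for $t\in(0,1]$ is bounded by the $\prb$-integrable function $z\mapsto d(x,z)+\tfrac{1}{2}$ since $\prb\in\wst{M}$. Dominated convergence then gives simultaneously the existence of the one-sided derivative and the integral formula. Independence of the chosen representative $\gamma\in\sigma$ is automatic, as the right-hand side only involves $\angle_x(\sigma,z)$.

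The main obstacle is Step~1: rigorously extracting the first variation formula $\tfrac{d}{dt}\bigr|_{0^+}d(\gamma(t),z)=-\cos(\angle_x(\sigma,z))$ from the comparison-angle machinery. The monotonicity and limit in Proposition~\ref{prop:anglelimit} together with the explicit law of cosines in $M_\kappa$ make this a short Taylor expansion, but one must handle all three curvature cases uniformly and verify that the remainder terms are $o(t)$. Everything else in the argument is routine.
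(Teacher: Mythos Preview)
Your proof is correct and follows essentially the same route as the paper: compute the pointwise limit of the difference quotient via the first variation formula for $d_z$, then pass the limit through the integral by dominated convergence with the triangle-inequality envelope $z\mapsto d(x,z)+\mathrm{const}$. The only difference is that where you sketch a derivation of the first variation formula from comparison triangles and Proposition~\ref{prop:anglelimit}, the paper simply invokes it as \cite[Corollary 4.5.7 and Remark 4.5.12]{BBI01}; note also that Proposition~\ref{prop:anglelimit} as stated has both geodesics evaluated at the same parameter $t$, so your use of it with $z$ held fixed tacitly appeals to the more general monotonicity in each parameter separately (which is indeed in the cited source \cite[Proposition II.3.1]{BH11}).
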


\begin{proof}
	For fixed $z\in M$ let $x$ be variable for the moment and let
	\begin{align*}
		d_z :  B_{R_\kappa}(z) \to [0,\infty),\quad x \mapsto d(x,z).
	\end{align*}
	Then, due to the \emph{first variation formula} from  \cite[Corollary 4.5.7
		and Remark 4.5.12]{BBI01}, $\nabla_{\sigma} d_{z}(x)$ exists for all $x\in B_{R_\kappa}(z)$ and is of form
	\begin{align*}
		\nabla_{\sigma} d_{z}(x)  = -\cos \left( \angle_x(\sigma, z) \right).
	\end{align*}
	In consequence, with
	$$f_t(z) := \frac{d^2(\gamma(t),z) -d^2(x,z) }{t} $$
	for some $\gamma \in \sigma$ with, $\gamma(0)=x$ and $\gamma(a) \in M$ for suitable $0<a< R_\kappa$, we have for arbitrary $y\in M$,
	$$ \lim_{t\downarrow 0} f_t(z) = \nabla_{\sigma} \left(d^2(x,z)-d^2(y,z)\right)  = - 2d(x,z) \cos \left( \angle_x(\sigma, z) \right)\,. $$
	Since
	\begin{align*}
		\left \lvert \frac{d^2(\gamma(t), z) - d^2(x,z)}{t} \right \rvert & \leq \frac{(d(\gamma(t), z) + d(x,z)) \cdot d(x, \gamma(t))}{t} \\ &\leq d(x, \gamma(t)) + 2 \cdot d(x,z) \leq a + 2 \cdot d(x,z)\,,
	\end{align*}
	cf. Remark \ref{rmk:Frechet-diff}, where the right-hand side is integrable due to the assumption that $\prb \in \wst[1]{M}$, by dominated convergence, integration and differentiation below can be exchanged (e.g.
	\cite[p. 232]{fleming}), yielding indeed
	\begin{align*}
		\nabla_{\sigma} F_\prb(x)  = \nabla_{\sigma} F_\prb^y(x) & = \frac{1}{2}\, \nabla_{\sigma} \int_M \left(d^2(x,z)-d^2(y,z)\right)\diff\prb(z) \\
		                                                         & = - \int_M d(x, z) \cdot \cos\left( \angle_x(\sigma, z)\right)\diff \prb(z) .
	\end{align*}

\end{proof}

Next, we note Lipschitz continuity for the directional derivative of the Fr\'echet function.
\begin{corollary} 
	\label{corollary:contfrech}
	Let 
	$x\in M$ and $\prb \in \wst[1]{M}$ with $\prb(B_{R_\kappa}(x)) = 1$.
	Then we have for any $\sigma, \tau \in \Sigma_x$
	\begin{align*}
		\lvert \nabla_\sigma F_\prb(x) - \nabla_\tau F_\prb(x) \rvert \leq L_x \angle_x(\sigma, \tau),
	\end{align*}
	where
	\begin{align*}
		L_x = \int_M d(x, z) \ \diff \prb(z).
	\end{align*}
\end{corollary}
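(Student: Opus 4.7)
The plan is to start directly from the integral representation given in Theorem \ref{theorem:pull}, take the difference of the two directional derivatives under a single integral, and then bound the integrand pointwise on the support of $\prb$. Since $\prb(B_{R_\kappa}(x))=1$, we may assume without loss of generality that every $z$ appearing under the integral lies in $B_{R_\kappa}(x)$, so that $\dir_x(z) \in \Sigma_x$ is defined and $\angle_x(\sigma, z)$ makes sense.

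First I would write
\[
\nabla_\sigma F_\prb(x) - \nabla_\tau F_\prb(x) = -\int_M d(x,z)\bigl[\cos(\angle_x(\sigma,z)) - \cos(\angle_x(\tau,z))\bigr]\,\diff\prb(z),
\]
and apply the triangle inequality to move the absolute value inside. The pointwise bound then comes from two ingredients: (a) the map $a \mapsto \cos a$ is $1$-Lipschitz on $\RR$, so $|\cos(\angle_x(\sigma,z)) - \cos(\angle_x(\tau,z))| \leq |\angle_x(\sigma,z) - \angle_x(\tau,z)|$; and (b) the Alexandrov angle satisfies the triangle inequality at $x$ by Lemma \ref{lem:Alexandrov-angle-triangle}, applied to the three directions $\sigma$, $\tau$, $\dir_x(z)$, giving $|\angle_x(\sigma,z) - \angle_x(\tau,z)| \leq \angle_x(\sigma,\tau)$. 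Combining these yields an integrand dominated by $d(x,z)\,\angle_x(\sigma,\tau)$, which is integrable precisely because $\prb \in \wst[1]{M}$, and pulling the constant $\angle_x(\sigma,\tau)$ out of the integral finishes the estimate with the claimed constant $L_x$.

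There is essentially no obstacle here: the corollary is a one-line consequence of Theorem \ref{theorem:pull} together with the triangle inequality for Alexandrov angles. The only small subtlety to mention is the well-definedness of $\angle_x(\sigma, z)$, which relies on $z \in B_{R_\kappa}(x) \setminus \{x\}$ so that $\dir_x(z)$ is defined; the single point $z=x$ contributes $0$ to the integral since it carries the factor $d(x,z)=0$, so it does not affect the estimate.
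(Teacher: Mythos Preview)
Your proof is correct and follows essentially the same approach as the paper: apply Theorem~\ref{theorem:pull}, bound the integrand using that $\cos$ is $1$-Lipschitz together with the triangle inequality for Alexandrov angles from Lemma~\ref{lem:Alexandrov-angle-triangle}, and pull out the constant $\angle_x(\sigma,\tau)$. Your added remark on the harmless contribution of $z=x$ is a nice touch that the paper leaves implicit.
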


\begin{proof}
	We have by Theorem \ref{theorem:pull} that
	\begin{align*}
		\lvert \nabla_\sigma F_\prb(x) - \nabla_\tau F_\prb(x) \rvert & \leq
		\int_M d(x,z)\,\big \lvert \cos \angle_x(\sigma,z) -  \cos \angle_x(\tau,z)\big\vert\,\diff\prb(z)
		\\
		                                                              & \leq
		\int_M d(x,z)\,\big \lvert \angle_x(\sigma,z)  - \angle_x(\tau,z)\big\vert\,\diff\prb(z) \\
		                                                              & \leq
		\int_M d(x,z)\, \angle_x(\sigma,\tau)\,\diff\prb(z)
	\end{align*}
	where we used that $\cos$ is Lipschitz-1 and the triangle inequality from Lemma \ref{lem:Alexandrov-angle-triangle}. This yields the assertion.
\end{proof}

Before defining the fourth sticky flavor further below, we explore the relationship between nonnegative directional derivatives and the Fr\'echet mean.
The following result will be used frequently, in particular in as part of Theorem \ref{theorem:meanderivs}.

The case for \cat-spaces with $\kappa \geq 0$ is discussed in
\cite[Corollary 15]{yokota}. For convenience, we include the simple proof.

\begin{theorem} 
	\label{theorem:meanderivsI}

	If $\mean \in \mathfrak{b}(\prb)$ for some $\prb\in \wst{M}$ with $\prb(B_{R_\kappa}(\mu)) = 1$, then
	\begin{align*}
		\nabla_\sigma F_\prb(\mean) \geq 0\mbox{ for all }\sigma\in \Sigma_\mu\,.
	\end{align*}
\end{theorem}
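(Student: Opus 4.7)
The plan is to reduce the statement to the elementary fact that the one-sided directional derivative of a function at a minimizer is nonnegative. Fix $\sigma \in \Sigma_\mu$ and choose a representative geodesic $\gamma : [0,a] \to M$ with $\gamma(0)=\mu$, where $a>0$ may be shrunk so that $a < R_\kappa$ and, in particular, $\gamma(t) \in B_{R_\kappa}(\mu)$ for all $t \in [0,a]$; this is possible because $\gamma$ has unit speed. By Theorem~\ref{theorem:pull}, the directional derivative $\nabla_\sigma F_P(\mu)$ exists as the one-sided limit
\begin{align*}
\nabla_\sigma F_P(\mu) \;=\; \lim_{t\downarrow 0} \frac{F_P^y(\gamma(t)) - F_P^y(\mu)}{t},
\end{align*}
where $F_P^y$ denotes the Fr\'echet difference with an arbitrary fixed reference point $y \in M$, cf.\ Remark~\ref{rmk:Frechet-diff}. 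Recall that in the excerpt, $\nabla_\sigma F_P$ in the $\wst[1]{M}$ setting is by convention shorthand for $\nabla_\sigma F_P^y$, so the two notations agree.

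Next I would exploit the minimizer property. Since $\mu \in \mathfrak{b}(P)$ and $\mathfrak{b}(P)$ is characterized as the set of minimizers of $F_P^y$ (which is well-defined and finite on all of $M$ for $P \in \wst[1]{M}$, again by Remark~\ref{rmk:Frechet-diff}), we have
\begin{align*}
F_P^y(\gamma(t)) \;\geq\; F_P^y(\mu) \quad \text{for all } t \in [0,a].
\end{align*}
Dividing by $t>0$ and letting $t \downarrow 0$, each prelimit quotient is nonnegative, so the one-sided limit is nonnegative as well. Combined with the previous paragraph, this yields $\nabla_\sigma F_P(\mu) \geq 0$, as required.

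Since $\sigma \in \Sigma_\mu$ was arbitrary, the conclusion follows. There is essentially no obstacle here: the only thing one must be careful about is that for $P \in \wst[1]{M}\setminus\wst[2]{M}$ the raw Fr\'echet function $F_P$ may diverge, so the inequality $F_P(\gamma(t)) \geq F_P(\mu)$ cannot be used directly; this is precisely why one passes to the Fr\'echet difference $F_P^y$, whose finiteness and identical minimizer set are guaranteed by Remark~\ref{rmk:Frechet-diff}, and whose directional derivative coincides with that which Theorem~\ref{theorem:pull} identifies via the first variation formula.
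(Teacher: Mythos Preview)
Your proof is correct and follows essentially the same approach as the paper: pick a representative geodesic, use that $\mu$ minimizes the Fr\'echet (difference) function, and conclude that the difference quotients are nonnegative so their limit is as well. You are in fact slightly more careful than the paper's own write-up, since you make explicit the passage to the Fr\'echet difference $F_P^y$ needed when $P\in\wst[1]{M}\setminus\wst[2]{M}$, which the paper handles only via its standing ``slight abuse of notation'' convention.
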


\begin{proof}

	Suppose $\mean \in \mathfrak{b}(\prb)$ and $\sigma \in \Sigma_\mean$. If
	$\gamma \in \sigma$ then $F_\prb(\gamma(t)) \geq F_\prb(\mean)$ for all
	$t\geq 0$ for which $\gamma$ is defined, and therefore
	\begin{align*}
		\nabla_\sigma F_\prb(\mean) = \lim_{t \searrow 0}\frac{F_\prb(\gamma(t)) - F_\prb(\mean)}{t} \geq 0.
	\end{align*}
\end{proof}

We now give a converse of Theorem \ref{theorem:meanderivsI} that will also be frequently used later, notably in the proof of Theorem \ref{theorem:mainthm}. The case for \cat-spaces with $\kappa \geq 0$ is discussed in \cite[Corollary 	15]{yokota}. For convenience, we include the simple proof. To this end,
we require the following scenario additional to Scenario \ref{set:1}.

\begin{setting}
	\label{set:2}

	In addition to Scenario \ref{set:1}, assume that $(M,d)$ is a
	\cat ~space for some $\kappa \in \RR$. Furthermore, we restrict ourselves to $\prb \in  \Qkapn$ and to the case
	$S = \{ \mu \} \subset \Bkapn$ for some $x_0 \in M$.
\end{setting}

\begin{theorem} 
	\label{theorem:meanderivs}

	In Scenario \ref{set:2} we have
	$\{\mean\} =  \mathfrak{b}(\prb)$ if and only if
	\begin{align*}
		\nabla_\sigma F_\prb(\mean) \geq 0 \quad \forall \sigma \in \Sigma_\mean.
	\end{align*}
\end{theorem}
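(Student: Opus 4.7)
The ``only if'' direction is exactly Theorem \ref{theorem:meanderivsI} applied to the singleton $\mathfrak{b}(\prb)=\{\mean\}$, so I focus on the converse. Assuming $\nabla_\sigma F_\prb(\mean) \geq 0$ for every $\sigma \in \Sigma_\mean$, Proposition \ref{prop:uniquefrech} already guarantees a unique Fr\'echet mean $\mean^* \in \Bkapn$; it therefore suffices to show $\mean^* = \mean$, and I would argue by contradiction.

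Suppose $\mean \neq \mean^*$, set $l := d(\mean, \mean^*) < R_\kappa$, and let $\gamma : [0,l] \to M$ be the unique unit-speed geodesic from $\mean$ to $\mean^*$ furnished by Proposition \ref{prop:catgeod}(i); write $\sigma := [\gamma] \in \Sigma_\mean$. The key step is to promote the NPC-type inequality from Proposition \ref{prop:catgeod}(iv) into a strong-convexity bound for $F_\prb$ along $\gamma$. Applied pointwise to the triple $(z, \mean, \mean^*)$ for each $z \in \supp(\prb)$, that inequality reads
\begin{align*}
d^2\bigl(z, \gamma(tl)\bigr) \leq (1-t)\, d^2(z, \mean) + t\, d^2(z, \mean^*) - C_z (1-t) t\, l^2, \qquad t \in [0,1],
\end{align*}
with a constant $C_z > 0$ (equal to $1$ if $\kappa \leq 0$; for $\kappa > 0$ equal to $C_{\kappa, \epsilon(z)}$ for a choice of $\epsilon(z) > 0$ such that $\{z, \mean, \mean^*\}$ lies in a set of diameter $\leq R_\kappa/2 - \epsilon(z)$). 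Integrating against $\prb$ and multiplying by $\tfrac{1}{2}$ yields
\begin{align*}
F_\prb\bigl(\gamma(tl)\bigr) \leq (1-t)\, F_\prb(\mean) + t\, F_\prb(\mean^*) - \tilde{C} (1-t) t\, l^2,
\end{align*}
where $\tilde{C} := \tfrac{1}{2}\int_M C_z \, d\prb(z) > 0$. Using the minimality $F_\prb(\mean^*) \leq F_\prb(\mean)$, this simplifies to $F_\prb(\gamma(tl)) - F_\prb(\mean) \leq -\tilde{C}(1-t) t\, l^2$; dividing by $tl > 0$ and passing $t \searrow 0$ produces $\nabla_\sigma F_\prb(\mean) \leq -\tilde{C} l < 0$, contradicting the hypothesis and forcing $\mean = \mean^*$.

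The main obstacle, and the only nontrivial point beyond bookkeeping, is the $\kappa > 0$ case, where Proposition \ref{prop:catgeod}(iv) requires $\{z, \mean, \mean^*\}$ to lie in a set of diameter $\leq R_\kappa/2 - \epsilon$ for a \emph{fixed} $\epsilon > 0$, which is strictly stronger than containment in the open half ball $\Bkapn$ (pairwise distances in $\Bkapn$ may approach $R_\kappa$). The fix is to let $\epsilon = \epsilon(z)$ vary with $z$, yielding the pointwise positive constant $C_z$; one then has to verify $\prb$-measurability of $z \mapsto C_z$ (routine, since $\epsilon(z)$ can be chosen as an explicit function of the three pairwise distances) and strict positivity of $\int C_z \, d\prb(z)$, which is immediate from $C_z > 0$ everywhere. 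With this integrated version of the strong-convexity estimate in hand, the argument above closes.
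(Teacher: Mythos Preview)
Your argument for $\kappa \leq 0$ is fine and close in spirit to the paper's. The gap is in the $\kappa > 0$ case: your proposed fix of choosing a $z$-dependent $\epsilon(z)>0$ does not work. Proposition \ref{prop:catgeod}(iv) requires the three points to lie in a set of diameter at most $R_\kappa/2 - \epsilon$, but in Scenario \ref{set:2} you only know $z,\mean,\mean^*\in\Bkapn$, and pairwise distances of points in $\Bkapn$ can be arbitrarily close to $R_\kappa$ (not $R_\kappa/2$). So for a given $z$ there may be \emph{no} admissible $\epsilon(z)>0$ at all, and the pointwise constant $C_z$ is simply undefined. Letting $\epsilon$ vary with $z$ does not address this; the obstruction is geometric, not one of uniformity.

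The paper avoids this by not insisting on strong convexity. It uses only plain convexity of $t\mapsto F_\prb(\gamma(t))$ along a geodesic from $\mean$, which gives the subgradient inequality $F_\prb(y)\ge F_\prb(\mean)+d(\mean,y)\,\nabla_\sigma F_\prb(\mean)\ge F_\prb(\mean)$ for $y$ near $\mean$; thus $\mean$ is a local minimizer, and Proposition \ref{prop:uniquefrech} upgrades this to $\{\mean\}=\mathfrak{b}(\prb)$. Note also that the strong-convexity term is unnecessary even in your contradiction framing: if $\mean^*\neq\mean$ were the unique minimizer then $F_\prb(\mean^*)<F_\prb(\mean)$ strictly, and mere convexity along $\gamma$ already forces $\nabla_\sigma F_\prb(\mean)\le \bigl(F_\prb(\mean^*)-F_\prb(\mean)\bigr)/l<0$. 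So the quadratic correction you work hard to obtain is not what drives the conclusion.
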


\begin{proof}

	The implication $\{\mean\} = \mathfrak{b}(\prb) \Rightarrow \nabla_\sigma
		F_\prb(\mean) \geq 0$ has already been shown in Theorem
	\ref{theorem:meanderivsI}.

	To see the converse implication, suppose that $\nabla_\sigma F_\prb(\mean)
		\geq 0$ for all $\sigma \in \Sigma_\mean$. Taking any point $y \in M
		\setminus \{\mean\}$ sufficiently nearby $\mean$ with direction $\sigma
		\in \Sigma_\mean$, we know for $\gamma \in \sigma$ by Proposition
	\ref{prop:catgeod} that $t \mapsto d^2(\mean, \gamma(t))$ and,
	consequently, $t \mapsto F_\prb(\gamma(t))$ is convex for $t\in
		[0,d(\mean,y)]$. Hence,
	\begin{align*}
		F_\prb(y) \geq F_\prb(\mean) + d(\mean,y) \cdot \nabla_{\sigma} F_\prb(\mean),
	\end{align*}
	see e.g. \cite[Theorem 23.2]{rockafellar}. Therefore, $F_\prb(\mean)
		\leq F_\prb(y)$ so that $\mu$ is a local minimizer for $F_\prb$. Due to
	Proposition \ref{prop:uniquefrech}, $\mean$ is the unique Fr\'echet mean.
\end{proof}

\begin{remark}

	Even if $\prb \in \Qkap$, there might still be other local minima of $F_\prb$ outside of $\Bkap$ if $\kappa >0$.
	For an example, see Figure \ref{fig:halfball-but-2ndlocalmin}. 
\end{remark}

\begin{figure}
	\centering
	\subfloat{\includegraphics[height=5cm,
			width=5cm]{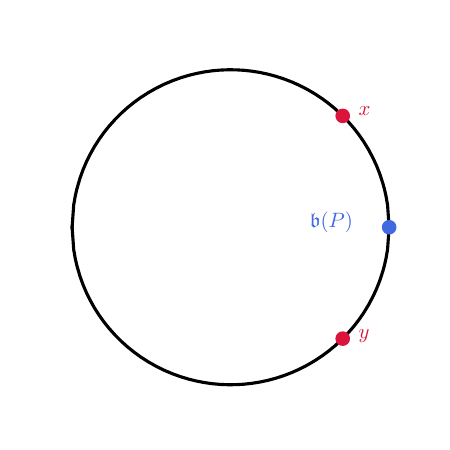}}\qquad
	\subfloat{\includegraphics[
			height=5cm, width=5cm]{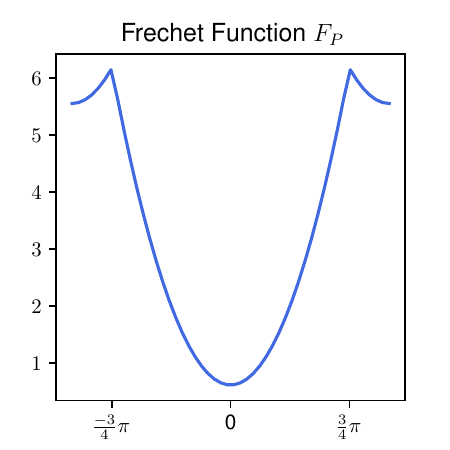}}\qquad

	\caption{\it The Fr\'echet function of the distribution $\prb = \frac{1}{2}(\delta_x + \delta_y)$ on the
		unit circle has a local minimum at the antipode of the Fr\'echet mean.}
	\label{fig:halfball-but-2ndlocalmin}
\end{figure}

\begin{definition}[Directional stickiness]

	In Scenario \ref{set:2}, a distribution $\prb \in \Qkapn$ with unique Fr\'echet mean $\mean$ is \emph{directionally sticky} if $\nabla_\sigma
		F_\prb(\mean) > 0$ for all directions $\sigma \in \Sigma_\mean$.
\end{definition}

It will be decisive to study the integrand of the derivative of the Fr\'echet
function.

\begin{definition}[The Pull] 
	\label{def:pull}
	Let 
	$x \in M$, $z \in
		B_{R_\kappa}(x)$ and $\sigma \in \Sigma_x$. Then the \emph{pull
		$\phi_{x,\sigma}$ of $z$ in direction $\sigma$} is given by
	\begin{equation}
		\label{eq:phi}
		\begin{split}
			\phi_{x, \sigma}(z) & = - \frac{1}{2} \nabla_\sigma d_z^2(x)                 \\
			                    & = d(x, z) \cdot \cos\left( \angle_x(\sigma, z)\right).
		\end{split}
	\end{equation}
\end{definition}

\begin{remark}\label{remark:derivative-Frechet-fcn-with-pull}
	For $\prb \in \wst[1]{M}$ with $\prb(\Bkapf)=1$, due to Theorem \ref{theorem:pull}, we can then write
	\begin{align*}
		\nabla_\sigma F_\prb(x) & =  - \int_M \phi_{x,\sigma}(z) \ \text{d}\prb(z). \\
	\end{align*}
\end{remark}

\begin{remark}\label{remark:folded-moments-directional-derivatives}
	Note that the directional derivatives of the Fr\'echet function are equal to the negative  \emph{first folded moments} for open books and
	the kale that have been detailed in Examples \ref{ex:open-books} and \ref{ex:kale}.
\end{remark}

\begin{remark}
	\label{rmk:pull-tangent-pull}

	Recall from Definition \ref{def:spaceofdir} that the tangent cone $T_x$ at
	$x\in M$ is the Euclidean cone over $\Sigma_x$, and hence $\Sigma_x$ can be
	naturally isometrically embedded into $\Sigma_\mathcal{O} \cl(T_x)$. 
	Thus, in the setting of Definition \ref{def:pull}, we have the \emph{tangent-pull}
	$\widetilde{\phi}_{\mathcal{O},\sigma}$   of $\log_x z$ at $\cO$ in
	direction of $(\sigma,1)$ given by
	\begin{align*}
		\widetilde{\phi}_{\mathcal{O}, \sigma}(\log_x z) & = -\frac{1}{2} \frac{\diff}{\diff t} \widetilde{d}^2(\log_x(z), \gamma_\mathcal{O}^{(\sigma,1)}(t)) \big \lvert_{t = 0},
	\end{align*}
	where $\widetilde{d}$ denotes the distance on the Euclidean cone $T_x$ and
	$\gamma_\mathcal{O}^{(\sigma,1)}$ is the straight line $t\mapsto (t,\sigma)$.
	With the notation of Definition \ref{Def:Cones}, we have $d_0=\widetilde{d}$
	and $d_{\Sigma_x}(\sigma,\tau) = d_\pi(\sigma,\tau) = \angle_x(\gamma_x^\sigma,
		\gamma_x^\tau))$
	In consequence, while in general $\widetilde{d}(\log_x(z),
		\gamma_\mathcal{O}^{(\sigma,1)}(t)) \neq d(z, \gamma_x^\sigma(t))$, both
	pull and corresponding tangent pull are equal: $$ \phi_{x,\sigma}(z) =
		d(x, z) \cdot \cos\left( \angle_x(\sigma, z)\right) =
		\widetilde{\phi}_{\mathcal{O}, \sigma}(\log_x z)\,.$$
\end{remark}

Thus, directional stickiness extends naturally to the tangent cone and from there
back to the original space.

\begin{theorem} 
	\label{thm:directional-sticky-tangent-cone}
	In Scenario \ref{set:2} consider $\prb\in \Qkapn$ with unique Fr\'echet mean $\mu \in \Bkapn$. Then $\cO$ is the unique Fr\'echet mean of
	$\log_\mean \# \prb \in \wst{\cl(T_\mean)}$.
	If $\Sigma_\mean = \cl(\Sigma_\mean)$,
	$\log_x \# \prb$ is directionally sticky if and only if $\prb$ is so.
\end{theorem}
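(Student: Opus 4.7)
The plan is to prove the two assertions in turn. First I will establish that $\cO$ is the unique Fr\'echet mean of $\widetilde\prb := \log_\mu\#\prb$, and then the stickiness equivalence falls out of the same computation.

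By Lemma \ref{lemma:directions_cat}, $\cl(T_\mu)$ is a Hadamard space, hence it satisfies Scenario \ref{set:2} with $\kappa=0$, where $R_0=\infty$ renders the half-ball condition vacuous. The pushforward $\widetilde\prb$ is a Borel probability measure on $\cl(T_\mu)$, and since $\int \widetilde d(\cO,\tilde z)\,\diff\widetilde\prb(\tilde z)=\int d(\mu,z)\,\diff\prb(z)<\infty$ (because $\prb\in\Qkapn$), we have $\widetilde\prb\in\wst{\cl(T_\mu)}$. By Theorem \ref{theorem:meanderivs} applied on $\cl(T_\mu)$, it suffices to verify $\nabla_\tau F_{\widetilde\prb}(\cO)\geq 0$ for every $\tau\in\Sigma_\cO\cl(T_\mu)$; the uniqueness statement then follows from Proposition \ref{prop:uniquefrech}.

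Next, I would exploit the cone structure: writing $\cl(T_\mu) = C_0(\cl(\Sigma_\mu))$ (cf.\ Definition \ref{Def:Cones}) identifies directions at $\cO$ with points of $\cl(\Sigma_\mu)$, each $\sigma\in\cl(\Sigma_\mu)$ giving the unit-speed ray $t\mapsto(\sigma,t)$. For any such $\sigma$, Theorem \ref{theorem:pull} applied on $\cl(T_\mu)$ together with the pushforward identity gives
\begin{align*}
\nabla_{(\sigma,1)} F_{\widetilde\prb}(\cO)
= -\int_{\cl(T_\mu)} \widetilde\phi_{\cO,(\sigma,1)}(\tilde z)\,\diff\widetilde\prb(\tilde z)
= -\int_M \widetilde\phi_{\cO,(\sigma,1)}(\log_\mu z)\,\diff\prb(z).
\end{align*}
For $\sigma\in\Sigma_\mu$, Remark \ref{rmk:pull-tangent-pull} identifies the integrand as $\phi_{\mu,\sigma}(z)$, so the right-hand side equals $\nabla_\sigma F_\prb(\mu)$, which is nonnegative by Theorem \ref{theorem:meanderivsI} since $\{\mu\}=\fb(\prb)$. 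For $\sigma\in\cl(\Sigma_\mu)\setminus\Sigma_\mu$, I would extend by density: Corollary \ref{corollary:contfrech} applied to $\widetilde\prb$ on $\cl(T_\mu)$ makes $\tau\mapsto\nabla_\tau F_{\widetilde\prb}(\cO)$ Lipschitz on $\Sigma_\cO\cl(T_\mu)\cong\cl(\Sigma_\mu)$, so nonnegativity transfers to the closure. This establishes $\{\cO\}=\fb(\widetilde\prb)$.

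For the stickiness equivalence, under the hypothesis $\Sigma_\mu=\cl(\Sigma_\mu)$ the identification $\Sigma_\cO\cl(T_\mu)\cong\Sigma_\mu$ is exact and the equality $\nabla_{(\sigma,1)} F_{\widetilde\prb}(\cO)=\nabla_\sigma F_\prb(\mu)$ from the computation above holds for every $\sigma\in\Sigma_\mu$; strict positivity for all directions is thus equivalent on both sides, yielding the stated iff. The main obstacle I anticipate is bridging $\Sigma_\mu$ and $\cl(\Sigma_\mu)$: Theorem \ref{theorem:meanderivsI} supplies nonnegativity only for directions in $\Sigma_\mu$, so uniqueness on the tangent cone hinges on the Lipschitz extension via Corollary \ref{corollary:contfrech}. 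Since strict inequalities are not preserved under such limits, this is precisely why the second half of the theorem, where strict positivity is at stake, requires the completeness condition $\Sigma_\mu=\cl(\Sigma_\mu)$.
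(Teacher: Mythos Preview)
Your proposal is correct and follows essentially the same route as the paper: both arguments use the pull/tangent-pull identity from Remark \ref{rmk:pull-tangent-pull} to transfer the inequality $\nabla_\sigma F_\prb(\mu)\geq 0$ (from Theorem \ref{theorem:meanderivsI}) to $\nabla_\sigma F_{\widetilde\prb}(\cO)$ for $\sigma\in\Sigma_\mu$, extend to $\cl(\Sigma_\mu)$ by the Lipschitz continuity of Corollary \ref{corollary:contfrech}, and then invoke Theorem \ref{theorem:meanderivs} on the Hadamard space $\cl(T_\mu)$; the stickiness equivalence then drops out of the same identity under $\Sigma_\mu=\cl(\Sigma_\mu)$. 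Your write-up is in fact slightly more explicit than the paper's (you verify $\widetilde\prb\in\wst{\cl(T_\mu)}$ and articulate why the closure hypothesis is needed only for strict positivity), but the underlying argument is the same.
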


\begin{proof}

	In consequence of the equality of pull and tangent pull by the above Remarks \ref{remark:derivative-Frechet-fcn-with-pull} and \ref{rmk:pull-tangent-pull}, we have for all $\sigma \in \Sigma_\mean$
	\begin{align}
		\label{eq:directional-derivs-tangent-cone}
		\nabla_\sigma F_\prb(\mean) = \nabla_\sigma F_{\log_\mean \# \prb}(\mathcal{O}).
	\end{align}
	By hypothesis, due to Theorem \ref{theorem:meanderivs}, the l.h.s. is
	$\geq 0$, hence, so is the r.h.s.
	Moreover, for any sequence $(\tau)_{{i \in \NN}} \in \Sigma_\mean$ converging against $\tau \in \cl(\Sigma_\mean)$
	we have by Remark \ref{remark:folded-moments-directional-derivatives} and Corollary \ref{corollary:contfrech} that
	$\nabla_\tau F_{\log_\mean \# \prb}(\mathcal{O}) \geq 0$.
	Further, by Lemma \ref{lemma:directions_cat}, $\cl(T_{\mean})$ 
	is a Hadamard space.
	Hence, $\cO$ is the Fr\'echet mean of $\log_x \# \prb$ by Theorem \ref{theorem:meanderivs}.
	In addition, (\ref{eq:directional-derivs-tangent-cone}) shows that $\prb$ is
	directionally sticky at $\mean$ if and only if $\log_\mean \# \prb$ is
	directionally sticky at $\mathcal{O}$.
\end{proof}

\begin{remark}
	\label{rmk:pull-change of curvature}
	Setting  $\breve{d}=\sqrt{\kappa}d$ in case of $\kappa > 0$, we have seen in Proposition \ref{prop:catgeod} that $(M, \breve{d})$ is a CAT(1) space. In particular, geodesics agree in both metrics, $d$ and $\breve{d}$, with one another after suitable rescaling (recall that our geodesics are of unit speed). In consequence their Alexandrov angles agree in both metrics, and so, the spaces of directions can be identified with one another. Letting $x,z\in M$, $\sigma \in \Sigma_x$, note that $d(x,z) < R_\kappa$ if and only if $\breve{d}(x,z) < R_1$.
\end{remark}

\begin{lemma}
	\label{lem:pull-change of curvature}
	Assume that 
	$\kappa > 0$ and let
	$x \in M$, $z \in B_{R_\kappa}(x)$, $\sigma \in \Sigma_x$ and $\breve{d}=\sqrt{\kappa}d$.
	Then $(M,\breve{d})$ is a CAT(1) space and for the corresponding pulls we have
	$$\phi_{x,\sigma}(z) = \frac{1}{\sqrt{\kappa}}~ \breve{\phi}_{x, \sigma}(z)$$
	where $\breve{d}(x,z) < R_1$.
\end{lemma}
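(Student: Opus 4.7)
The plan is to reduce everything to the definition of the pull and use the scaling invariance of Alexandrov angles under the rescaling $\breve{d} = \sqrt{\kappa} d$. First, I would observe that the CAT(1) claim is not new: it is exactly Proposition \ref{prop:catgeod}(iii). Likewise, the condition $\breve{d}(x,z) < R_1 = \pi$ is immediate from the hypothesis $z \in B_{R_\kappa}(x)$, since $\breve{d}(x,z) = \sqrt{\kappa}\,d(x,z) < \sqrt{\kappa}\cdot \pi/\sqrt{\kappa} = \pi$, as already noted in Remark \ref{rmk:pull-change of curvature}. So the substantive content is the scaling identity for the pull.

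Next I would invoke the observation from Remark \ref{rmk:pull-change of curvature} that the geodesics in $(M,d)$ and in $(M,\breve{d})$ coincide up to unit-speed reparametrization, so both metric spaces share the same space of directions at $x$ and the same Alexandrov angles. In particular, for any $\sigma \in \Sigma_x$ and any $z \in B_{R_\kappa}(x)\setminus\{x\}$, the angle $\angle_x(\sigma,z)$ and the direction $\dir_x(z)$ are identical whether computed with respect to $d$ or $\breve{d}$. This is the only nontrivial ingredient, and it is the main (but essentially routine) obstacle to watch out for — one must be careful that the arccos limit defining the Alexandrov angle is invariant under multiplying the metric by $\sqrt{\kappa}$ (which it is, since both numerator and denominator scale by $\kappa$).

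Having secured the angle identity, the claim becomes a one-line computation from Definition \ref{def:pull}:
\begin{align*}
\breve{\phi}_{x,\sigma}(z) = \breve{d}(x,z)\cdot \cos\!\left(\angle_x(\sigma,z)\right) = \sqrt{\kappa}\,d(x,z)\cdot\cos\!\left(\angle_x(\sigma,z)\right) = \sqrt{\kappa}\,\phi_{x,\sigma}(z),
\end{align*}
which rearranges to the asserted identity $\phi_{x,\sigma}(z) = \frac{1}{\sqrt{\kappa}}\,\breve{\phi}_{x,\sigma}(z)$. The case $z = x$ is trivial since both pulls vanish. No further machinery is needed.
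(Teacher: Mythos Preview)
Your proof is correct. Both you and the paper rely on the same underlying fact from Remark~\ref{rmk:pull-change of curvature} (angles and directions are unchanged under the rescaling $\breve{d}=\sqrt{\kappa}d$), but you apply it to the closed-form expression $\phi_{x,\sigma}(z)=d(x,z)\cos\angle_x(\sigma,z)$ from Definition~\ref{def:pull}, which makes the identity a one-line substitution. The paper instead works from the limit definition $\phi_{x,\sigma}(z)=-\tfrac{1}{2}\lim_{t\searrow 0}\big(d^2(\gamma(t),z)-d^2(x,z)\big)/t$, rescales the metric, and absorbs the extra factor of $\sqrt{\kappa}$ by reparametrizing the geodesic as $\breve{\gamma}(t)=\gamma(t/\sqrt{\kappa})$. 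Your route is slightly more economical since the cosine formula already encodes the first-variation computation; the paper's route has the minor advantage of not needing to invoke the angle-invariance explicitly, as it falls out of the change of variables.
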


\begin{proof}
	With Remark \ref{rmk:pull-change of curvature}, if  $\gamma \in\sigma$ with respect to $d$, then $\breve{\gamma} \in \sigma$ with respect to $\breve{d}$ where $\breve{\gamma}(t) = \gamma(t / \sqrt{\kappa})$. Hence, we have
	\begin{align*}
		\phi_{x,\sigma}(z) & = -\lim_{t \searrow 0} \frac{d^2(\gamma(t),z)- d^2(x,z)}{t}                                                                             \\
		                   & = -\frac{1}{\kappa}\lim_{t \searrow 0} \frac{\breve{d}^2(\gamma(t),z)- \breve{d}^2(x,z)}{t}                                             \\
		                   & = -\frac{1}{\sqrt{\kappa}} \lim_{t \searrow 0} \frac{\breve{d}^2(\breve{\gamma}(\sqrt{\kappa} t),z)- \breve{d}^2(x,z)}{\sqrt{\kappa} t}
		\\
		                   & = \frac{1}{\sqrt{\kappa}} \breve{\phi}_{x, \sigma}(z).
	\end{align*}
	In conjunction with Remark \ref{rmk:pull-change of curvature}, this yields the assertions.
\end{proof}

\begin{lemma} 
	\label{lemma:philip}
	Let 
	$x\in M$ and $\sigma\in \Sigma_x$. Further, in case of $\kappa>0$, let $0<\epsilon < \pi$ be arbitrary and $R= R_\kappa - \epsilon/\sqrt{k}$ and in case of $\kappa \leq 0$ let $R=\infty$. Then there is a constant $C_\kappa(\epsilon) >0$ such that
	\begin{align*}
		\lvert \phi_{x,\sigma}(z) - \phi_{x,\sigma}(y) \rvert \leq C_\kappa(\epsilon)\, d(y, z)\mbox{ for all }y,z \in B_{R}(x).
	\end{align*}
	In particular $C_\kappa(\epsilon) = 1$ for $\kappa \leq 0$ and
	$$C_\kappa(\epsilon) = \max_{a,b,\theta \in [0,\pi-\epsilon]^2\times [0,\pi]} \Psi(a,b,\theta)$$
	for $\kappa > 0$, with the continuous function
	\begin{align*}
		\Psi : & \ [0, \pi-\epsilon]^2 \times [0, \pi] \to (0,\infty), \\
		       & (a,b,\theta) \mapsto
		\begin{cases}
			1 \quad                                                                                                       & \textrm{if } a = 0 = b, \text{ or } \theta = 0 \text{ and }a=b, \\
			\frac{\sqrt{a^2 + b^2 - 2ab\cos(\theta)}}{\arccos\left(\sin(a) \sin(b) \cos(\theta) + \cos(a) \cos(b)\right)} & \textrm{else.}
		\end{cases}
	\end{align*}

\end{lemma}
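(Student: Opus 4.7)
I would use the factorisation $\phi_{x,\sigma}(z) = \tilde\phi_{\mathcal{O},\sigma}(\log_x z)$ from Remark~\ref{rmk:pull-tangent-pull} and bound the two factors separately: first prove that the tangent-pull $\tilde\phi_{\mathcal{O},\sigma}$ is $1$-Lipschitz on the Euclidean cone $\cl(T_x)$, and then control the distortion of $\log_x$ on $B_R(x)$, splitting into the two cases $\kappa \leq 0$ and $\kappa > 0$.

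\textbf{Step 1 ($1$-Lipschitz tangent-pull).} For $w_i = (s_i, \eta_i) \in \cl(T_x)$, let $\alpha_i = \angle_x(\sigma, \eta_i)$ and $\beta = \angle_x(\eta_1, \eta_2)$, all in $[0,\pi]$. The triangle inequality for the Alexandrov angle (Lemma~\ref{lem:Alexandrov-angle-triangle}) gives $|\alpha_1 - \alpha_2| \leq \beta$, and since $\cos$ is decreasing on $[0,\pi]$,
\begin{align*}
(s_1\cos\alpha_1 - s_2\cos\alpha_2)^2 \leq s_1^2 + s_2^2 - 2 s_1 s_2 \cos(\alpha_1 - \alpha_2) \leq s_1^2 + s_2^2 - 2 s_1 s_2 \cos\beta = d_0(w_1,w_2)^2,
\end{align*}
where the first inequality is the $1$-Lipschitz property of the orthogonal projection in $\RR^2$ applied to the auxiliary points $(s_i\cos\alpha_i, s_i\sin\alpha_i)$.

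\textbf{Step 2 (Case $\kappa \leq 0$).} By Proposition~\ref{prop:catgeod}(ii), $(M,d)$ is CAT$(0)$. Setting $a = d(x,y), b = d(x,z)$, the CAT$(0)$ angle comparison gives $\angle_x(y,z) \leq \bar\angle_x^0(y,z)$, while by construction of the Euclidean reference triangle, $d(y,z)^2 = a^2 + b^2 - 2 a b \cos\bar\angle_x^0(y,z)$. Monotonicity of $\cos$ then yields $\tilde d(\log_x y, \log_x z)^2 = a^2 + b^2 - 2 a b \cos\angle_x(y,z) \leq d(y,z)^2$; composing with Step~1 finishes this case with $C_\kappa(\epsilon) = 1$.

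\textbf{Step 3 (Case $\kappa > 0$ and main obstacle).} Lemma~\ref{lem:pull-change of curvature} reduces the claim to the CAT$(1)$ rescaling $\breve d = \sqrt{\kappa}\, d$, since the inequality for $\phi_{x,\sigma}$ with constant $C_\kappa(\epsilon)$ is equivalent to the same one for $\breve\phi_{x,\sigma}$ with respect to $\breve d$. For $y,z \in B_{\pi-\epsilon}(x)$ in $\breve d$, with $\breve a, \breve b$ as before and $\bar\theta = \bar\angle_x^1(y,z)$, CAT$(1)$ comparison gives $\angle_x(y,z) \leq \bar\theta$, so Step~1 together with the spherical law of cosines $\breve d(y,z) = \arccos(\cos\breve a \cos\breve b + \sin\breve a \sin\breve b \cos\bar\theta)$ in the reference triangle yields
\begin{align*}
|\breve\phi_{x,\sigma}(y) - \breve\phi_{x,\sigma}(z)| \leq \sqrt{\breve a^2 + \breve b^2 - 2 \breve a \breve b \cos\bar\theta} = \Psi(\breve a, \breve b, \bar\theta) \cdot \breve d(y,z) \leq C_\kappa(\epsilon)\cdot \breve d(y,z).
\end{align*}
The geometric heart of the argument is Step~1, which exploits the Euclidean-cone structure of $T_x$ together with the Alexandrov-angle triangle inequality and the $2$-dimensional Euclidean projection estimate. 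The technical subtlety in Step~3 is ensuring the availability of a spherical reference triangle (the perimeter bound $\leq 2\pi$), which is automatic when $\breve d(y,z) \leq 2\epsilon$ and which in the remaining configurations can be handled by subdividing a $\breve d$-geodesic from $y$ to $z$ into sufficiently short pieces on which the local bound applies and summing; the degenerate corners of the domain of $\Psi$ are harmless, as both sides of the target inequality vanish there.
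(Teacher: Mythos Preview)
Your argument is correct and structurally close to the paper's, but with a genuinely different ingredient in the $\kappa\le 0$ part. For $\kappa>0$ both you and the paper do the same thing: pass to the tangent cone via Remark~\ref{rmk:pull-tangent-pull}, use $1$-Lipschitzness of the tangent-pull to reach $\widetilde d(\log_x y,\log_x z)$, then compare the cone distance with $d(y,z)$ through the spherical comparison angle and the function $\Psi$, finally rescaling via Lemma~\ref{lem:pull-change of curvature}. The difference is how the $1$-Lipschitz bound for the (tangent-)pull is obtained. The paper proves the $\kappa\le 0$ case by a direct application of Reshetnyak's quadruple comparison (Proposition~\ref{prop:quadcomp}) to the four points $x,\gamma(t),y,z$, and then re-uses this $\kappa=0$ result on the Hadamard tangent cone. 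Your Step~1 instead gives an elementary two-dimensional computation on the Euclidean cone, using only the Alexandrov-angle triangle inequality and the identity $(s_1\cos\alpha_1-s_2\cos\alpha_2)^2+(s_1\sin\alpha_1-s_2\sin\alpha_2)^2=s_1^2+s_2^2-2s_1s_2\cos(\alpha_1-\alpha_2)$; your Step~2 then recovers the $\kappa\le 0$ statement from the well-known fact that $\log_x$ is $1$-Lipschitz in CAT$(0)$. This buys you a self-contained proof that does not call on Reshetnyak; the paper's route is shorter for $\kappa\le 0$ but relies on that external tool. Your explicit flagging of the perimeter constraint for the spherical reference triangle in Step~3 is a point the paper passes over silently; your proposed subdivision fix is reasonable, though note it tacitly uses that a geodesic from $y$ to $z$ exists and stays in $B_{\pi-\epsilon}(x)$, which would still need a word (or the trivial bound $|\breve\phi_{x,\sigma}(y)|+|\breve\phi_{x,\sigma}(z)|\le \breve a+\breve b$ in the remaining regime).
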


\begin{proof}
	First we do the case $\kappa \leq 0$. According to Proposition \ref{prop:catgeod} we can assume $\kappa =0$.

	With arbitrary $\gamma \in\sigma$
	apply the quadruple comparison
	(Proposition \ref{prop:quadcomp}), yielding for $t\geq 0$ sufficiently
	small,
	\begin{align*}
		d^2(x, y) + d^2(\gamma(t), z) - d^2(\gamma(t), y) - d^2(x, z)
		\leq 2 \cdot d(y,z) \cdot d(x, \gamma(t)),
	\end{align*}
	and
	\begin{align*}
		d^2(\gamma(t), y) + d^2(x, z) - d^2(x, y) - d^2(\gamma(t), z)
		\leq 2 \cdot d(y,z) \cdot d(x, \gamma(t)).
	\end{align*}
	Hence, it holds that
	\begin{align*}
		\Big\lvert \left(d^2(\gamma(t), z) - d^2(x, z)\right) -
		\left( d^2(\gamma(t), y) - d^2(x, y) \right) \Big \rvert
		\leq 2 \cdot d(x, \gamma(t)) \cdot d(y,z).
	\end{align*}
	Thus, we have that
	\begin{align*}
		\lvert \phi_{x,\sigma}(z) - \phi_{x,\sigma}(y) \rvert
		 & = \frac{1}{2} \lim_{t \searrow 0} \frac{\Big\lvert
			\left(d^2(\gamma(t), z) - d^2(x, z)\right) - \left(
		d^2(\gamma(t), y) - d^2(x, y) \right)  \Big \rvert }{t}            \\
		 & \leq \lim_{t \searrow 0} \frac{d(x, \gamma(t)) \cdot d(y,z)}{t} \\
		 & =  \lim_{t \searrow 0} \frac{t \cdot d(y,z)}{t}                 \\
		 & = d(y,z),
	\end{align*}
	as asserted.

	Next, we do the case $\kappa =1$.

	Taking into account that the argument of the $\arccos$ in the denominator
	of $\Psi$ is the inner product of the unit length vectors $(\cos(a),
		\sin(a) \cos(\theta), \sin(a) \sin(\theta))$ and $(\cos(b),\sin(b),0)$,
	which is less than one except for $a = 0 = b$, $a=-\pi =b$ or $\theta = 0$
	and $a=b$, series expansions there yield that the function $\Psi$ is
	continuous on $[0, \pi-\epsilon]^2\times [0, \pi]$ for every $0<\epsilon <
		\pi$, and hence, it must attain a maximum $C:=C_1(\epsilon)$. Further, due to
	Lemma \ref{lemma:directions_cat} the
	tangent cone $T_x$
	is a Hadamard
	space. Recalling from Remark \ref{rmk:pull-tangent-pull} that $\widetilde{d}$
	denotes the distance on the tangent cone, application of Remark
	\ref{rmk:pull-tangent-pull} and the case, already shown for $\kappa =0$, yields,
	taking into account Definition \ref{Def:Cones},
	\begin{align}
		\nonumber
		\lvert \phi_{x,\sigma}(z) - \phi_{x,\sigma}(y) \rvert & = \lvert \widetilde{\phi}_{\mathcal{O},\sigma}(\log_x(z)) - \widetilde{\phi}_{\mathcal{O},\sigma}(\log_x(y)) \rvert \\ \nonumber
		                                                      & \leq \widetilde{d}(\log_x(y), \log_x(z))                                                                            \\ \nonumber
		                                                      & =  \sqrt{d_x^2(y) + d_x^2(z) - 2 d_x(y) d_x(z) \cos(\angle_x(y,z))}                                                 \\ \nonumber
		                                                      & \leq  C\cdot \arccos\Big(\sin(d_x(y)) \sin(d_x(z)) \cos(\angle_x(y,z))
		\\ \label{eq:pull-difference-dist1}
		                                                      & \hspace*{3cm }+ \cos(d_x(y)) \cos(d_x(z)) \Big).
	\end{align}
	Further, due to Proposition \ref{prop:anglelimit} and Definition \ref{def:compangle}, obtain
	\begin{align*}
		\angle_x(y,z) \leq \angle_x^1(y,z) =  \arccos \left(\frac{\cos(d(y,z)) - \cos(d_x(y))\cos(d_x(z))}{\sin(d_x(y))\sin(d_x(z))} \right).
	\end{align*}
	Setting $\theta:= \angle_x^1(y,z)$, since $\cos$ is non-increasing on $[0,\pi]$, we have $\cos(\angle_x(y,z)) \geq \cos(\theta)$.
	As $\arccos$ is also non-increasing, we obtain
	\begin{align*}
		 & \arccos \left( \sin(d_x(y)) \sin(d_x(z)) \cos(\angle_x(y,z)) + \cos(d_x(y)) \cos(d_x(z)) \right) \\
		 & \leq \arccos \left( \sin(d_x(y)) \sin(d_x(z)) \cos(\theta) + \cos(d_x(y)) \cos(d_x(z)) \right)   \\
		 & = d(y,z).
	\end{align*}
	Combining this with (\ref{eq:pull-difference-dist1}) we have indeed
	\begin{align*}
		\lvert \phi_{x,\sigma}(z) - \phi_{x,\sigma}(y) \rvert & \leq C\cdot \arccos\Big(\sin(d_x(y)) \sin(d_x(z)) \cos(\angle_x(x,y))
		\\
		                                                      & \hspace*{3cm }+ \cos(d_x(y)) \cos(d_x(z)) \Big)
		\\
		                                                      & \leq C \cdot d(y,z)\,,
	\end{align*}
	as asserted.

	Finally, we reduce the general case $\kappa > 0$ to the case $\kappa =1$.
	Indeed, using the notation from Lemma \ref{lem:pull-change of curvature}
	\begin{align*}
		\vert \phi_{x,\sigma}(z) -  \phi_{x,\sigma}(z)\vert & = \frac{1}{\sqrt{\kappa}}~ \vert \breve{\phi}_{x,\sigma}(z) -  \breve{\phi}_{x,\sigma}(z)\vert
		\\
		                                                    & \leq \frac{1}{\sqrt{\kappa}}~ C\,\breve{d}(y,z) = C d(y,z)\,.
	\end{align*}
\end{proof}

\begin{theorem} 
	\label{theorem:derivcontr_sphere}

	In Scenario \ref{set:2} we have
	for $\prb, \prbalt \in \Qkapn$,
	\begin{align*}
		\lvert \nabla_\sigma F_ \prb (\mean)- \nabla_\sigma F_\prbalt(\mean) \rvert
		\leq C_\kappa(\epsilon) \cdot W_1(\prb, \prbalt),
	\end{align*}
	with the constant $C_\kappa(\epsilon)$ from Lemma \ref{lemma:philip}.
\end{theorem}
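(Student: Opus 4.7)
The plan is to combine the integral representation of the directional derivative from Theorem \ref{theorem:pull} with the Lipschitz estimate for the pull from Lemma \ref{lemma:philip}, and then to take an optimal $W_1$-coupling to turn the resulting integral of $d(y,z)$ into a Wasserstein distance.

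First, I would invoke Theorem \ref{theorem:pull} (equivalently Remark \ref{remark:derivative-Frechet-fcn-with-pull}). Since $\prb, \prbalt \in \Qkapn$ have support in $\Bkapn$ and $\mean \in \Bkapn$, the triangle inequality gives $d(\mean, z) < R_\kappa$ for every $z$ in either support, so $\prb(B_{R_\kappa}(\mean)) = \prbalt(B_{R_\kappa}(\mean)) = 1$ and both $\nabla_\sigma F_\prb(\mean)$, $\nabla_\sigma F_\prbalt(\mean)$ are well-defined and given by the integral of $-\phi_{\mean,\sigma}$. Picking any coupling $\varpi \in \Pi(\prb, \prbalt)$, whose existence follows from Scenario \ref{set:1} via \cite[Theorem 4.1]{V08}, I can express both integrals against the marginals of $\varpi$ and obtain
\begin{align*}
\nabla_\sigma F_\prb(\mean) - \nabla_\sigma F_\prbalt(\mean) = \int_{M \times M} \bigl( \phi_{\mean,\sigma}(z) - \phi_{\mean,\sigma}(y) \bigr)\, \diff \varpi(y, z).
\end{align*}

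Second, I would apply Lemma \ref{lemma:philip} pointwise to the integrand. For $\kappa \leq 0$ this is unconditional with $C_\kappa(\epsilon) = 1$, and for $\kappa > 0$ one can choose $0 < \epsilon < \pi$ such that $\supp(\prb) \cup \supp(\prbalt) \cup \{\mean\}$ lies inside $B_R(\mean)$ with $R = R_\kappa - \epsilon/\sqrt{\kappa}$. This yields $\lvert \phi_{\mean,\sigma}(z) - \phi_{\mean,\sigma}(y)\rvert \leq C_\kappa(\epsilon)\, d(y, z)$ for $\varpi$-almost every $(y,z)$. Taking absolute values inside the integral and using this bound gives
\begin{align*}
\lvert \nabla_\sigma F_\prb(\mean) - \nabla_\sigma F_\prbalt(\mean)\rvert \leq C_\kappa(\epsilon) \int_{M \times M} d(y, z)\, \diff \varpi(y, z).
\end{align*}
Infimizing the right-hand side over $\varpi \in \Pi(\prb, \prbalt)$ then produces $C_\kappa(\epsilon) \cdot W_1(\prb, \prbalt)$, which is the claim.

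The only real subtlety, and the point that needs the most care, is the bookkeeping for the range of validity of Lemma \ref{lemma:philip} when $\kappa > 0$: the supports live in the \emph{open} half-ball $\Bkapn$ and may approach its boundary, so $\epsilon$ (hence $C_\kappa(\epsilon)$) depends on the geometry of $\supp(\prb) \cup \supp(\prbalt)$ relative to $\mean$ rather than being universal. Once this dependence is accepted, the rest is a routine coupling/Lipschitz calculation and no further geometric input from the \cat~structure is needed.
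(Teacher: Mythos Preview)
Your proposal is correct and follows essentially the same route as the paper: express each directional derivative via the pull integral (Theorem \ref{theorem:pull}), rewrite the difference against a coupling, apply the Lipschitz bound from Lemma \ref{lemma:philip}, and pass to $W_1$. The only cosmetic difference is that the paper fixes an optimal coupling from the outset rather than infimizing at the end, and it does not spell out the $\epsilon$-bookkeeping for $\kappa>0$ that you (rightly) flag.
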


\begin{proof}
	Let $\varpi$ denote an optimal coupling of $\prb$ and $\prbalt$ for the cost
	function $d$. Since $\prb(B_{R_\kappa}(\mean)) = 1 = \prbalt(B_{R_\kappa}(\mean))$ in Scenario \ref{set:2}, by Theorem \ref{theorem:pull} and Lemma \ref{lemma:philip}, we have
	\begin{align*}
		\Bigg\lvert \nabla_\sigma F_{\prb} (\mean) - \nabla_\sigma F_{\prbalt} (\mean)
		\Bigg \rvert & = \Bigg \lvert \int_M \phi_{\mean,\sigma}(z) \ \text{d}\prb(z)
		- \int_M \phi_{\mean,\sigma}(y) \ \text{d}\prbalt(y)\Bigg \rvert                                     \\
		             & = \Bigg \lvert \int_{M \times M} \left(\phi_{\mean,\sigma}(z) -
		\phi_{\mean,\sigma}(y) \right) \ \text{d}\varpi(y, z)\Bigg \rvert                                    \\
		             & \leq  \int_{M \times M} \lvert \phi_{\mean,\sigma}(z) - \phi_{\mean,\sigma}(y) \rvert
		\ \text{d}\varpi(y, z)                                                                               \\
		             & \leq \int_{M \times M}  C_\kappa d(y,z) \ \text{d}\varpi(y,z)                         \\
		             & = C_\kappa W_1(\prb,\prbalt).
	\end{align*}

\end{proof}

Perturbation and sample stickiness along with the following regularity condition imply
directional stickiness.

\begin{definition}
	We say that $\prb \in \Qkap$ with $\fb(\prb) = \mean$ fulfills the \emph{pull condition} (pc) if there is \emph{no} direction $\sigma \in \Sigma_\mean$ such that the pull $\phi_{\mean,\sigma}$ (see Equation \ref{eq:phi}) vanishes $\prb$-almost everywhere, i.e.
	\begin{equation}
		\label{pullcond}
		\forall \sigma \in \Sigma_\mean: \ \prb\Big( \phi_{\mean, \sigma}^{-1}(\{0\})\Big) < 1 . \tag{pc}
	\end{equation}
\end{definition}

\begin{remark}
	Assumption (pc)
	ensures that the measure lives on a large
	enough subspace. In particular, it rules out that the measure is
	just a point mass at $\mean$. The distribution considered in Example \ref{ex:samplenotW1} provides a non-trivial counterexample
	where the assumption is violated.
\end{remark}

\begin{theorem} 
	\label{theorem:mainthmdir}
	In Scenario \ref{set:2} for $\prb\in \Qkapn$  each of the following statements
	\begin{enumerate}
		\item[(P)] $\prb$ is $\Bkapn$-perturbation sticky at $\mean$,
		\item[(Spc)] $\prb$ is sample sticky at $\mean$ and fulfills the pull condition (\ref{pullcond}),
	\end{enumerate}
	implies
	\begin{enumerate}
		\item[($\nabla$)] $\prb$ is directionally sticky at $\mean$.
	\end{enumerate}
\end{theorem}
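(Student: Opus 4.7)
Both implications share a common starting point: by Theorem \ref{theorem:meanderivsI}, since $\mean$ is the Fr\'echet mean of $\prb$, we already have $\nabla_\sigma F_\prb(\mean) \geq 0$ for every $\sigma \in \Sigma_\mean$. The task in each case is therefore to upgrade this non-strict inequality to a strict one for all $\sigma$. I would achieve this in (P) by choosing a perturbation point along a geodesic representing $\sigma$ and using linearity of the Fr\'echet function in the measure, and in (Spc) by contradicting sample stickiness via a one-dimensional random walk argument on the pull-evaluations $\phi_{\mean,\sigma}(X_i)$.

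\textbf{Case (P) $\Rightarrow$ ($\nabla$).} Fix $\sigma \in \Sigma_\mean$ with a geodesic representative $\gamma$, $\gamma(0)=\mean$. Since $\mean \in \Bkapn$ is interior and $\gamma$ is continuous, $y := \gamma(s) \in \Bkapn$ for every sufficiently small $s > 0$. Perturbation stickiness provides $t_y > 0$ with $\mathfrak{b}(\prb_y^t)=\{\mean\}$ for all $t \in (0,t_y)$ (uniqueness by Proposition \ref{prop:uniquefrech}). Linearity of the Fr\'echet function in the measure gives
\begin{align*}
F_{\prb_y^t}(x) = (1-t)\, F_\prb(x) + \frac{t}{2}\,d^2(x,y),
\end{align*}
so, by Remark \ref{remark:derivative-Frechet-fcn-with-pull},
\begin{align*}
\nabla_\sigma F_{\prb_y^t}(\mean) = (1-t)\,\nabla_\sigma F_\prb(\mean) - t\,\phi_{\mean,\sigma}(y).
\end{align*}
Since $y=\gamma(s)$ lies on a representative of $\sigma$, $\angle_\mean(\sigma,y)=0$ and $d(\mean,y)=s$, hence $\phi_{\mean,\sigma}(y)=s$. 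Theorem \ref{theorem:meanderivsI} applied to $\prb_y^t$ yields $(1-t)\,\nabla_\sigma F_\prb(\mean) \geq ts$, so $\nabla_\sigma F_\prb(\mean) \geq ts/(1-t) > 0$, which is ($\nabla$).

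\textbf{Case (Spc) $\Rightarrow$ ($\nabla$).} Suppose for contradiction that $\nabla_\sigma F_\prb(\mean)=0$ for some $\sigma \in \Sigma_\mean$. Let $Y_i := \phi_{\mean,\sigma}(X_i)$ for an i.i.d.\ sample $X_1,X_2,\ldots \sim \prb$. By Remark \ref{remark:derivative-Frechet-fcn-with-pull}, $\EE[Y_1] = -\nabla_\sigma F_\prb(\mean) = 0$; by (pc), $Y_1$ is not almost surely zero; and $|Y_i| \leq d(\mean,X_i)$ is integrable since $\prb \in \wst[1]{M}$. Sample stickiness furnishes a random $N$ with $\mathfrak{b}(\prb_n)=\{\mean\}$ for all $n \geq N$ a.s., and since $\supp(\prb_n) \subseteq \Bkapn \subseteq B_{R_\kappa}(\mean)$ a.s., Theorem \ref{theorem:meanderivsI} applied to $\prb_n$ gives
\begin{align*}
0 \leq \nabla_\sigma F_{\prb_n}(\mean) = -\frac{1}{n}\sum_{i=1}^n Y_i, \qquad n \geq N,
\end{align*}
so the random walk $S_n := \sum_{i=1}^n Y_i$ satisfies $S_n \leq 0$ for all $n \geq N$ a.s.

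\textbf{Main obstacle.} The crux is to rule out this eventual non-positivity of $S_n$. The plan is to invoke the Chung--Fuchs recurrence theorem: a one-dimensional random walk with integrable, mean-zero, non-degenerate i.i.d.\ increments is recurrent, in particular $\limsup_n S_n = +\infty$ a.s., which directly contradicts $S_n \leq 0$ for $n \geq N$. When $\prb \in \wst[2]{M}$ (automatic for $\kappa > 0$ since $\Bkapn$ is bounded), finite variance of $Y_1$ is immediate and one may instead use the central limit theorem together with the Hewitt--Savage $0$--$1$ law to conclude $S_n > 0$ infinitely often a.s., reaching the same contradiction.
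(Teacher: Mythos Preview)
Your proof is correct and follows essentially the same route as the paper. For (P) $\Rightarrow$ ($\nabla$) both you and the paper perturb by a point mass along a geodesic in direction $\sigma$, use linearity of the Fr\'echet function in the measure, and extract strict positivity from $\nabla_\sigma F_{\prb_y^t}(\mean)\geq 0$; for (Spc) $\Rightarrow$ ($\nabla$) both arguments reduce to the oscillation dichotomy for a mean-zero, non-degenerate random walk (the paper cites Durrett's Theorem 5.4.8/Exercise 5.4.1, you phrase it via Chung--Fuchs, with the same conclusion $\limsup S_n=+\infty$ a.s.). One small point the paper makes explicit that you leave implicit: in (Spc) the fact that $\mean$ is the population Fr\'echet mean is deduced from sample stickiness via the strong law of large numbers for Fr\'echet means, rather than assumed.
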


\begin{proof}

	``(P) $\Rightarrow$ ($\nabla$)'': We have $\nabla_\sigma F_{\prb} (\mean) \geq 0$ for all $\sigma \in \Sigma_{\mean}$ by Theorem \ref{theorem:meanderivs}, so we need to show that $\nabla_\sigma F_{\prb} (\mean) \neq 0$ for arbitrary $\sigma \in \Sigma_{\mean}$. Choose $\gamma_\mu^y \in \sigma$ for suitable $y\in M$. By assumption $P$ is perturbation sticky, hence there is $0 < t_y < 1$ such that $\mean$ is the unique mean of
	\begin{align*}
		\prbalt_t = (1-t) \cdot \prb + t \cdot \delta_y, \quad t \in [0,1]\,.
	\end{align*}
	In consequence, for all $0 < t \leq t_y$, again by Theorem \ref{theorem:meanderivs},
	\begin{align*}
		0 \leq & \nabla_\sigma F_{\prbalt_t} (\mean)                                                    \\
		=      & (1-t) \cdot \nabla_\sigma F_{\prb} (\mean)+ t \cdot \nabla_\sigma F_{\delta_y} (\mean) \\
		=      & (1-t) \cdot \nabla_\sigma F_{\prb} (\mean) + \frac{t}{2} \cdot \frac{d}{ds}
		d^2(y,\gamma_{\mean}^y(s)) \Big\lvert_{s=0}                                                     \\
		=      & (1-t) \cdot \nabla_\sigma F_{\prb} (\mean) + \frac{t}{2} \cdot \frac{d}{ds} \left(
		(d(\mean,y)-s)^2 \right) \Big\lvert_{s=0}                                                       \\
		=      & (1-t) \cdot \nabla_\sigma F_{\prb} (\mean) -t \cdot d(\mean,y),
	\end{align*}
	where the penultimate equality holds as $\gamma_{\mean}^y$ is a geodesic from $\mean$ to $y$, which has unit speed. Hence,
	\begin{align*}
		\nabla_\sigma F_{\prb} (\mean) \geq \frac{t}{1-t} \cdot d(\mean,y) > 0,
	\end{align*}
	yielding ``($\nabla$)''.

	``(Spc) $\Rightarrow$ ($\nabla$)'': By assumption, there is a.s. random $N$ such that
	\begin{equation*}
		\mathfrak{b}(\prb_n) = \{\mean\} \quad \forall n \geq N.
	\end{equation*}
	By the strong law of large numbers (see \cite[Theorem 6.3, Propostion 6.6]{Sturm} and \cite[Proposition 24]{yokota}),
	we have $\mu \in \mathfrak{b}(\prb)$ and since $\prb\in \cQ$ has a unique mean by assumption,
	$$\{\mu\} = \mathfrak{b}(\prb)\,.$$
	Now, suppose that ($\nabla$) was false. Then there is $\sigma \in \Sigma_{\mean}$ with $\nabla_\sigma F_\prb(\mean) \leq 0$, and hence
	\begin{equation}
		\label{eq:zeroderiv}
		\nabla_\sigma F_\prb(\mean) = 0\,,
	\end{equation}
	by Theorem \ref{theorem:meanderivs}. Due to sample stickiness, again by Theorem \ref{theorem:meanderivs}, we have
	\begin{align}
		\label{ineq:empdeg}
		Y_n = n \cdot \nabla_\sigma F_{\prb_n}(\mean) = -\sum_{i = 1}^n \phi_{\mean,\sigma}(X_i) \geq 0 \quad
		\forall n \geq N,
	\end{align}
	where, due to (\ref{eq:zeroderiv}) and definition of the pull, we have
	\begin{align*}
		\mathbb{E}(\phi_{\mean,\sigma}(X_i)) = -\nabla_\sigma F_\prb(\mean) = 0 \quad \forall i \in \NN.
	\end{align*}


	It follows that the random walk $(Y_n)_{n \in \mathbb{N}}$ is reversible, see
	\cite[Theorem 5.4.8]{durrett}. Thus, with probability 1, we have that either $Y_n = 0$ for all $n \in
		\mathbb{N}$, or $- \infty = \liminf_n Y_n < \limsup_n Y_n = \infty$,
	see e.g. \cite[Exercise 5.4.1]{durrett}. The first case cannot occur
	since, due to (Spc), there is no direction $\sigma \in \Sigma_\mean$ such
	that $\phi_{\mean,\sigma} = 0$ $\prb$-almost surely. The second case
	contradicts (\ref{ineq:empdeg}). Hence, we have shown that $\nabla_\sigma
		F_\prb(\mean) > 0$ for all $\sigma \in \Sigma_\mean$, i.e. ($\nabla$).

\end{proof}

\subsection{Wasserstein Stickiness in \cat~Spaces}

The following theorem was initially stated and proven for the kale in
\cite[Theorem 7.6]{HMMN15}. We provide a
generalization to separable, locally compact and geodesically complete
\cat-spaces.

\begin{setting}
	\label{set:3}

	In addition to Scenario \ref{set:2}, assume that $(M,d)$ is \emph{geodesically complete} and \emph{locally compact}.
\end{setting}
%

Recall from Proposition \ref{prop:uniquefrech} that already Scenario \ref{set:2} ensures unique  Fr\'echet means.

\begin{theorem} 
	\label{theorem:mainthm}

	In Scenario \ref{set:3} let $q\geq 1$ 
	and $\prb \in \Qkapn[q]$.  Then, the following statements are equivalent.

	\begin{enumerate}
		\item[($W_q$)] $\prb$ is $(\Qkapn[q], W_{q})$-sticky at $\mean$,
		\item[(P)] $\prb$ is $\Bkapn$-perturbation sticky at $\mean$,
		\item[(Spc)] $\prb$ is sample sticky at $\mean$ and fulfills the pull condition (\ref{pullcond}),
		\item[($\nabla$)] $\prb$ is directionally sticky at $\mean$.
	\end{enumerate}
\end{theorem}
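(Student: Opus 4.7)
The plan is to derive the four-way equivalence by closing the cycle $(W_q)\Rightarrow(P)\Rightarrow(\nabla)\Rightarrow(W_q)$ and treating $(\mathrm{Spc})$ on the side. Three arrows are already in hand: $(W_q)\Rightarrow(P)$ and $(W_q)\Rightarrow$ (sample sticky) from Theorem \ref{theorem:immediateflvrs}(v), and both $(P)\Rightarrow(\nabla)$ and $(\mathrm{Spc})\Rightarrow(\nabla)$ from Theorem \ref{theorem:mainthmdir}. Two things therefore remain: the key implication $(\nabla)\Rightarrow(W_q)$, and the observation $(\nabla)\Rightarrow$ (pc), which combined with $(W_q)\Rightarrow$ (S) yields $(W_q)\Rightarrow(\mathrm{Spc})$ and closes the diagram around (Spc) as well.

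The pull condition falls out immediately: if $\phi_{\mean,\sigma}$ vanished $\prb$-a.s.\ for some $\sigma\in\Sigma_\mean$, Remark \ref{remark:derivative-Frechet-fcn-with-pull} would force $\nabla_\sigma F_\prb(\mean)=-\int\phi_{\mean,\sigma}\,\diff\prb=0$, contradicting directional stickiness.

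The core step is $(\nabla)\Rightarrow(W_q)$, for which the idea is to lift the pointwise strict positivity of $\sigma\mapsto\nabla_\sigma F_\prb(\mean)$ to a uniform gap and then transfer this gap to nearby measures via $W_1$-Lipschitz continuity of directional derivatives. In Scenario \ref{set:3}, Proposition \ref{prop:cat_dir}(i) renders $\Sigma_\mean$ compact, and Corollary \ref{corollary:contfrech} renders $\sigma\mapsto\nabla_\sigma F_\prb(\mean)$ Lipschitz on it, so by $(\nabla)$
\begin{align*}
\delta := \min_{\sigma\in\Sigma_\mean}\nabla_\sigma F_\prb(\mean) > 0.
\end{align*}
For $\kappa>0$ choose $\epsilon>0$ with $\Bkapn\subseteq B_{R_\kappa-\epsilon/\sqrt{\kappa}}(\mean)$, which is possible since $\mean\in\Bkapn$ yields $\sup_{z\in\Bkapn}d(\mean,z)\leq R_\kappa/2+d(\mean,x_0)<R_\kappa$; for $\kappa\leq 0$ no such choice is needed. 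Theorem \ref{theorem:derivcontr_sphere} then furnishes a constant $C_\kappa(\epsilon)$ with $|\nabla_\sigma F_\prb(\mean)-\nabla_\sigma F_\prbalt(\mean)|\leq C_\kappa(\epsilon)\,W_1(\prb,\prbalt)$ for all $\prbalt\in\Qkapn[q]\subseteq\Qkapn$ and all $\sigma\in\Sigma_\mean$. Using $W_1\leq W_q$ from Theorem \ref{theorem:immediateflvrs}(iv), any $\prbalt\in\Qkapn[q]$ with $W_q(\prb,\prbalt)<\delta/C_\kappa(\epsilon)$ satisfies $\nabla_\sigma F_\prbalt(\mean)>0$ for every $\sigma\in\Sigma_\mean$, and Theorem \ref{theorem:meanderivs} then identifies $\mean$ as the unique Fr\'echet mean of $\prbalt$, establishing $(W_q)$-stickiness.

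The main obstacle is producing the uniform gap $\delta>0$: pointwise positivity from $(\nabla)$ alone would not translate into a Wasserstein neighborhood of stickiness without compactness of $\Sigma_\mean$, and that compactness is exactly the reason Scenario \ref{set:3} (local compactness and geodesic completeness) is invoked rather than Scenario \ref{set:2}. A secondary, routine concern is choosing $\epsilon$ so that Lemma \ref{lemma:philip} gives a finite Lipschitz constant valid on the relevant supports; strict interior containment of $\mean$ in $\Bkapn$ handles this.
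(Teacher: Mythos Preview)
Your proof is correct and follows essentially the same route as the paper: close the cycle $(W_q)\Rightarrow(P)\Rightarrow(\nabla)\Rightarrow(W_q)$, using compactness of $\Sigma_\mean$ (Proposition \ref{prop:cat_dir}) together with the $W_1$-Lipschitz bound (Theorem \ref{theorem:derivcontr_sphere}) for the key arrow, and tie in $(\mathrm{Spc})$ via $(W_q)\Rightarrow(\mathrm{S})$ plus the observation that $(\nabla)$ forces the pull condition. Your explicit choice of $\epsilon$ for $\kappa>0$ makes a detail precise that the paper leaves implicit in its appeal to Theorem \ref{theorem:derivcontr_sphere}, and the inequality $W_1\leq W_q$ is cited in the paper directly from Villani rather than via Theorem \ref{theorem:immediateflvrs}(iv).
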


\begin{proof}

	``($W_q$) $\Rightarrow$ (P)'' follows from Theorem
	\ref{theorem:immediateflvrs} (iii). 
	Both ``(P) $\Rightarrow$ ($\nabla$)'' and ``(Spc) $\Rightarrow$ ($\nabla$)'' were shown in Theorem\ref{theorem:mainthmdir}.

	``($\nabla$) $\Rightarrow$ ($W_q$)'':
	For any measure $\prbalt \in \Qkapn$, due to Theorem \ref{theorem:derivcontr_sphere}, there is $C$, such that
	\begin{equation*}
		\sup_{\sigma \in \Sigma_{\mean}} \Big \lvert \nabla_\sigma  F_{\prb}
		(\mean) - \nabla_\sigma F_{\prbalt} (\mean) \Big \rvert \leq C \cdot
		W_1(\prb,\prbalt) \leq C \cdot W_{q}(\prb, \prbalt)
	\end{equation*}
	where we used $W_1 \leq W_{q}$ for 	$q \geq 1$, cf.\cite[Remark 6.1]{V08}. Since $\Sigma_\mu \to (0,\infty), \sigma \mapsto \nabla_\sigma F_\prb(\mean)$, by assumption, is continuous (Corollary \ref{corollary:contfrech}) and $\Sigma_{\mean}$ is compact (Proposition \ref{prop:cat_dir}), there is a lower bound $c > 0$ s.t.
	\begin{align*}
		0 < c = \min_{\sigma \in \Sigma_\mean} \nabla_\sigma F_\prb(\mean)
	\end{align*}
	for all $\sigma \in \Sigma_{\mean}$. In consequence, for all $\prbalt \in \Qkapn[q]$ with  $W_{q}(\prb,\prbalt) < \frac{c}{C}$, we have
	\begin{align*}
		\sup_{\sigma \in \Sigma_{\mean}} \Big\lvert \nabla_\sigma F_{\prb}
		(\mean) - \nabla_\sigma F_{\prbalt} (\mean) \Big \rvert \leq
		C \cdot W_{q^\prime}(\prb,\prbalt) < c\,,
	\end{align*}
	yielding that $\inf_{\sigma \in \Sigma_{\mean}} \nabla_\sigma F_{\prbalt} (\mean) > 0.$ Since every $Q\in \Qkapn[q]$ has, by Theorem \ref{theorem:meanderivs}, only one local minimizer in $\Bkap$, which is its Fr\'echet mean, we have that $\fb{(\prbalt}) = \{\mean\}$ for all $\prbalt \in \Qkapn[q]$ with  $W_{q}(\prb,\prbalt) < \frac{c}{C}$, thus establishing (W$_q$).

	``($\nabla$) $\Rightarrow$ (Spc)'': Above, we have seen that (D) $\Rightarrow$ (W$_q$) and in Theorem \ref{theorem:immediateflvrs} that (W$_q$) $\Rightarrow$ (P). Moreover, due to ($\nabla$), all directional derivatives are positive, so that there can be no direction with $\phi_{\mean,\sigma} = 0$ $\prb$-almost surely, yielding (Spc).

\end{proof}

\begin{remark}
	\begin{enumerate}
		\item The assumption of geodesic completeness could be relaxed to hold only locally at the Fr\'echet mean, making Theorem \ref{theorem:mainthm} also applicable to bounded CAT(0)-spaces like, say, a tripod, cf. Example \ref{ex:kspider}, with truncated legs.
		\item In consequence of Theorems \ref{theorem:immediateflvrs} and \ref{theorem:mainthm}, a
		      distribution in $\Qkapn$ featuring a $1\leq q$-th moment is
		      W$_q$-sticky if and only it is W$_1$-sticky.
	\end{enumerate}
\end{remark}

\begin{corollary} 
	\label{cor:stickytangent}

	In Scenario \ref{set:3}, a distribution $\prb \in \Qkapn[q]$ is $(\Qkapn[q], W_1)$ sticky at $\mean \in \Bkap$ if and only
	if the distribution $\log_{\mean}\# \prb$ is $(\wst[q]{T_\mean}, W_q)$-sticky at $\cO$.
\end{corollary}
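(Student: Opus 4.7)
The plan is to use directional stickiness as a bridge between the ambient \cat~space and its tangent cone, chaining two applications of Theorem~\ref{theorem:mainthm} through the transfer result of Theorem~\ref{thm:directional-sticky-tangent-cone}.

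First, apply Theorem~\ref{theorem:mainthm} on $(M,d)$ to $\prb \in \Qkapn[q]$: in Scenario~\ref{set:3}, being $(\Qkapn[q], W_1)$-sticky at $\mu$ is equivalent to being directionally sticky at $\mu$ (using that for $\prb \in \Qkapn[q]$, $W_1$- and $W_q$-stickiness coincide by the remark following that theorem, since both are equivalent to directional stickiness).

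Second, transfer directional stickiness from $\mu$ in $M$ to $\cO$ in the tangent cone by Theorem~\ref{thm:directional-sticky-tangent-cone}. Its hypothesis $\Sigma_\mu = \cl(\Sigma_\mu)$ is automatic in Scenario~\ref{set:3} by Proposition~\ref{prop:cat_dir}(i), which guarantees $\Sigma_\mu$ is metrically complete; the same theorem also identifies $\cO$ as the unique Fr\'echet mean of $\log_\mu \# \prb$.

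Third, apply Theorem~\ref{theorem:mainthm} a second time, now to $\log_\mu \# \prb$ on $(T_\mu, \widetilde{d})$. The structural hypotheses are supplied by Proposition~\ref{prop:cat_dir}(ii): under Scenario~\ref{set:3} on $M$, $T_\mu$ is a locally compact, separable, geodesically complete Hadamard space, so Scenarios~\ref{set:1}--\ref{set:3} all hold on $T_\mu$ with curvature bound $\kappa'=0$. Consequently $R_{\kappa'} = \infty$ and $\mathcal{Q}^q_{\cO,\kappa'} = \wst[q]{T_\mu}$, matching the family appearing on the right-hand side of the corollary. One must also verify that $\log_\mu \# \prb \in \wst[q]{T_\mu}$, which follows immediately from the identity $\widetilde{d}(\cO, \log_\mu z) = d(\mu, z)$ (see Remark~\ref{rmk:pull-tangent-pull} and Definition~\ref{Def:Cones}) together with $\prb \in \wst[q]{M}$. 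Hence directional stickiness of $\log_\mu \# \prb$ at $\cO$ is equivalent to $(\wst[q]{T_\mu}, W_q)$-stickiness at $\cO$. Concatenating the three equivalences yields the claim.

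The only genuine work lies in the bookkeeping of Step~3 -- confirming that every scenario-level assumption transports to $T_\mu$ -- but this is entirely mechanical once Proposition~\ref{prop:cat_dir} and the isometric relationship $\widetilde{d}(\cO, \log_\mu z) = d(\mu, z)$ are invoked; so no nontrivial obstacle remains beyond correctly matching hypotheses.
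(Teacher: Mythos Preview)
Your proposal is correct and follows essentially the same route as the paper: use Theorem~\ref{thm:directional-sticky-tangent-cone} to transfer directional stickiness between $M$ and $T_\mu$, check via Proposition~\ref{prop:cat_dir} that $T_\mu$ again satisfies Scenario~\ref{set:3}, and invoke Theorem~\ref{theorem:mainthm} on both sides. Your version is in fact more careful than the paper's terse proof, as you explicitly verify completeness of $\Sigma_\mu$, the moment condition for $\log_\mu\#\prb$, and the identification $\mathcal{Q}^q_{\cO,0}=\wst[q]{T_\mu}$.
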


%
%

\begin{proof}

	By Theorem \ref{thm:directional-sticky-tangent-cone}, we know that $\prb$ is directionally 	sticky if $\log_{\mean}\# \prb$ is so. Under the assumptions of Scenario \ref{set:3}, $T_x$ also fulfills the assumptions of Scenario \ref{set:3} as a CAT(0)-space due to Theorem \ref{prop:cat_dir}. Thus, Theorem \ref{theorem:mainthm} is applicable to both $\prb$ and $\log_{\mean}\# \prb$,
	yielding the claim.


\end{proof}

In this section, thus far, we have considered topological stickiness only with respect to the topology induced by the Wasserstein distance. Curiously, in principle, topological stickiness with respect to the topologies induced by an $f$-divergence with $f$ satisfying (fc) from the introduction can only occur if the space $(M,d)$ is bounded. If we have an unbounded CAT(0)-space, there cannot be any $f$-divergence sticky distributions as is shown in Theorem \ref{theorem:fdiver_unbounded} in the next Section \ref{scn:rule-out}.

\begin{theorem} 
	\label{thm:fdiver_bounded}
	In Scenario \ref{set:3} assume $\diam (\Bkap) < \infty$.
	Then, the following are equivalent for $\prb \in \Qkap$ with $\fb(\prb) = \mean$.
	\begin{enumerate}
		\item[(W)] $\prb$ is $(\Qkapn[q], W_q)$-sticky for any $q\geq 1$
		\item[(TV)] $\prb$ is $(\Qkap, \TV)$-sticky
		\item[($D_f$fc)] $\prb$ is $(\Qkap, D_f)$-sticky if $f$ is twice-differentiable
		      and $f^{\prime\prime}(x) >0$ for all $x \neq 1$.
	\end{enumerate}
\end{theorem}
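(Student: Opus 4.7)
The plan is to close a cycle $(W) \Rightarrow (TV) \Rightarrow (D_f\textrm{fc}) \Rightarrow (W)$ by assembling results already on the table. The preparatory observation is that, because we are in Scenario \ref{set:3}, Theorem \ref{theorem:mainthm} renders all $(\Qkap[q], W_q)$-stickiness conditions mutually equivalent (each being equivalent to directional stickiness $(\nabla)$). So the quantifier ``for any $q \geq 1$'' in $(W)$ is harmless: $(\Qkap[q], W_q)$-stickiness for some $q$ is the same as for every $q$, and in particular is the same as $(\Qkap[1], W_1)$-stickiness.

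For $(W) \Rightarrow (TV)$, I would simply specialize to $q=1$ and then invoke Theorem \ref{theorem:immediateflvrs}(i), whose hypothesis of a bounded domain is guaranteed by the added assumption $\diam(\Bkap) < \infty$. For $(TV) \Rightarrow (D_f\textrm{fc})$, the hypothesis (fc)---$f$ twice differentiable with $f''(x)>0$ for $x\neq 1$---is exactly the assumption of Theorem \ref{theorem:immediateflvrs}(ii), which gives the implication directly. For $(D_f\textrm{fc}) \Rightarrow (W)$, Theorem \ref{theorem:immediateflvrs}(iii) turns $(D_f)$-stickiness into $\Bkap$-perturbation stickiness $(P)$, and Theorem \ref{theorem:mainthm} in Scenario \ref{set:3} then upgrades $(P)$ to $(\Qkap[q], W_q)$-stickiness for every $q \geq 1$, giving $(W)$.

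There is essentially no substantive obstacle here: the theorem is a bookkeeping consequence of Theorems \ref{theorem:immediateflvrs} and \ref{theorem:mainthm}. The only place where the new hypothesis $\diam(\Bkap) < \infty$ is used is in the step $(W) \Rightarrow (TV)$, where it supplies the bounded-diameter assumption needed to compare $W_1$ and $\TV$ via $W_1(\prb, \prbalt) \leq \diam(\Bkap) \cdot \TV(\prb, \prbalt)$. The boundedness plays no role in the other two implications, which reinforces the fact (to be sharpened in the subsequent Theorem \ref{theorem:fdiver_unbounded}) that unboundedness is what rules out $f$-divergence stickiness altogether.
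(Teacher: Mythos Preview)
Your proposal is correct and follows essentially the same cycle as the paper's proof: $(W_1) \Rightarrow (\TV) \Rightarrow (D_f\textrm{fc}) \Rightarrow (P) \Rightarrow (W_1)$, assembled from Theorem~\ref{theorem:immediateflvrs}(i)--(iii) and Theorem~\ref{theorem:mainthm}. Your additional remarks---that Theorem~\ref{theorem:mainthm} makes all $W_q$-stickiness conditions interchangeable, and that the boundedness hypothesis is consumed only in the step $(W)\Rightarrow(\TV)$---are accurate and slightly more explicit than the paper's terse version, but the argument is the same.
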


\begin{proof}

	From Theorem \ref{theorem:immediateflvrs} we have (W$_1$) $\Rightarrow$
	(TV) $\Rightarrow$ ($D_f$fc). Again by
	Theorem \ref{theorem:immediateflvrs}, ($D_f$fc) implies $\Bkap$-perturbation stickiness.
	This, in turn, implies $(\Qkap, W_1)$-stickiness by Theorem \ref{theorem:mainthm}.

\end{proof}

\subsection{Ruling Out Stickiness}
\label{scn:rule-out}

We begin by showing that directional stickiness, and, thus, all other
flavors are not possible if $(M,d)$ is a sufficiently regular Riemannian manifold.

\begin{theorem}
	\label{thm:rulingoutmfd}

	Suppose $(M,d)$ is a complete connected Riemannian manifold of finite dimension, equipped with
	its intrinsic distance, and has upper sectional curvature bound $\kappa \in
		\RR$. Then for any $x_0 \in M$, there is no distribution $\prb \in \Qkapn$
	that is directionally sticky and, consequently, $(\Qkapn, W_1)$-sticky or $\Bkapn$-perturbation sticky.
	Furthermore, $\prb \in \Qkap$ is sample sticky if and only if it is a
	Dirac-measure.
\end{theorem}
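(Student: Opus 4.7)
The plan is to exploit what distinguishes a Riemannian manifold from a general \cat~space: at every $\mean\in M$ the space of directions $\Sigma_\mean$ is (isometric to) the unit sphere of the tangent space $T_\mean M$, so for every $\sigma\in\Sigma_\mean$ the antipode $-\sigma$ also lies in $\Sigma_\mean$, and for any other direction $\tau\in\Sigma_\mean$ one has $\angle_\mean(\sigma,\tau)+\angle_\mean(-\sigma,\tau)=\pi$.

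First, I rule out directional stickiness. Applying this antipodal identity with $\tau=\dir_\mean(z)$ for each $z\in M\setminus\{\mean\}$ yields $\cos(\angle_\mean(-\sigma,z))=-\cos(\angle_\mean(\sigma,z))$. Substituting into the pull representation of Theorem \ref{theorem:pull} gives
\begin{align*}
  \nabla_\sigma F_\prb(\mean)+\nabla_{-\sigma} F_\prb(\mean)
    &=-\int_M d(\mean,z)\bigl[\cos(\angle_\mean(\sigma,z))+\cos(\angle_\mean(-\sigma,z))\bigr]\diff\prb(z)\\
    &=0.
\end{align*}
Hence $\nabla_\sigma F_\prb(\mean)$ and $\nabla_{-\sigma}F_\prb(\mean)$ cannot both be strictly positive, which already contradicts directional stickiness. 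Since the Riemannian hypotheses place us, for distributions supported in $\Bkapn$, in the setting of Scenario \ref{set:3} (geodesic completeness follows from Hopf--Rinow, local compactness is immediate, and within the half-ball $\Bkapn$ the upper curvature bound yields the \cat~property), the equivalences of Theorem \ref{theorem:mainthm} then rule out $(\Qkapn,W_1)$-stickiness and $\Bkapn$-perturbation stickiness as immediate consequences.

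For the sample-stickiness characterisation, the ``$\Leftarrow$'' direction is immediate: if $\prb=\delta_\mean$ then $X_i=\mean$ a.s.\ for all $i$ and hence $\fb(\prb_n)=\{\mean\}$ for every $n$. For ``$\Rightarrow$'', suppose $\prb$ is sample sticky at $\mean$, so there is an a.s.\ finite $N$ with $\smean=\mean$ for all $n\geq N$. Within $\Bkapn$ on a Riemannian manifold with sectional curvature bounded above by $\kappa$, the function $z\mapsto d^2(z,X_i)$ is $C^2$-smooth at $\mean$ with Riemannian gradient $-2\log_\mean(X_i)$, so the first-order optimality condition for the interior empirical Fr\'echet minimizer reads $\sum_{i=1}^n\log_{\smean}(X_i)=0$. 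Substituting $\smean=\hat{\mean}_{n+1}=\mean$ and subtracting the conditions at $n$ and $n+1$ produces $\log_\mean(X_{n+1})=0$, i.e.\ $X_{n+1}=\mean$, for every $n\geq N$. Thus on the event of sample stickiness $X_n=\mean$ eventually a.s., and the second Borel--Cantelli lemma applied to the independent events $\{X_n\neq\mean\}$ forces $\prb(\{\mean\})=1$.

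The main obstacle is to justify the first-order optimality equation on the manifold rigorously: one must argue that $F_{\prb_n}$ is differentiable at $\mean$ with vanishing gradient, which rests on $C^2$ regularity of each squared distance $d^2(\cdot,X_i)$ at $\mean$ and on $\mean$ being interior to $\Bkapn$. Both are standard within a ball of radius $R_\kappa/2$ on a Riemannian manifold of upper curvature bound $\kappa$, because such a ball contains no cut points of its centre. A secondary subtlety is that a complete connected Riemannian manifold with sectional curvature $\leq\kappa$ need not be globally \cat; however, since both the directional and the sample-stickiness arguments only refer to behavior on $\Bkapn$, one may simply localise and exploit that $\Bkapn$ itself is \cat~under the upper curvature bound.
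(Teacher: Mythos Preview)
Your proof is correct. For the first part (no directional stickiness), your antipodal identity $\nabla_\sigma F_\prb(\mean)+\nabla_{-\sigma}F_\prb(\mean)=0$ is essentially the pull-formula version of the paper's one-line observation that on a Riemannian manifold the Riemannian gradient of $F_\prb$ vanishes at the Fr\'echet mean, so all directional derivatives are zero. The content is the same; your version is more self-contained within the paper's own framework, whereas the paper simply cites this as well-known.

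For the sample-stickiness characterisation your route is genuinely different. The paper argues by contradiction: if $\prb$ is not Dirac, some direction $\sigma$ has non-vanishing pull, and since $\nabla_\sigma F_\prb(\mean)=0$ the random-walk argument from the proof of $(\mathrm{Spc})\Rightarrow(\nabla)$ in Theorem~\ref{theorem:mainthmdir} (recurrence of a mean-zero walk) contradicts the eventual nonnegativity of the empirical directional derivatives. You instead exploit the full vector first-order condition $\sum_{i=1}^n\log_\mean(X_i)=0$ at consecutive sample sizes, telescope to get $X_{n+1}=\mean$ for $n\ge N$, and finish with Borel--Cantelli. Your argument is more elementary and avoids the random-walk machinery; the paper's argument has the advantage of reusing an implication already established and of only needing the scalar pull in one direction rather than the vector-valued log map. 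Both are valid; your acknowledgement of the localisation issue (global \cat\ vs.\ sectional curvature bound) is a point the paper's proof leaves implicit.
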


\begin{proof}

	In case of differentiability it is well-known that the gradient vanishes at the Fr\'echet mean for a
	distribution $\prb \in \Qkapn$ (see e.g. \cite[Section 7]{kendall_mean}). Thus, all directional derivatives of
	$F_\prb$ vanish as well. By the assumptions above, we are in Scenario \ref{set:3} and Theorem \ref{theorem:mainthm} rules
	out the remaining flavors also.\\
	Let $\prb \in \Qkapn$ be sample sticky at $\mean \in \Bkapn$ and suppose it
	is not a Dirac-measure. Then there must be a
	direction $\sigma \in \Sigma_\mean$ such that $\prb(\phi_{\mean_\sigma}^{-1}(\{0\})) < 1$.
	However, as we saw above that $\nabla_\sigma F_\prb(\mean) =0$, this leads to
	a contradiction by the argument of the direction "$(\text{Spc}) \implies (\nabla)$" in Theorem \ref{theorem:mainthmdir}, recalling that Scenario \ref{set:2} holds.

\end{proof}

To also rule out stickiness at a point in a more general space, one can resort to the tangent cone at that point, since
Corollary \ref{cor:stickytangent} shows that stickiness of a probability measure is inherently tied to the tangent cone of the Fr\'echet mean. In Theorem \ref{theorem:rulingout}, we will see that the structure of the tangent cone at a point rules out 
potential stickiness at that point.

\begin{definition}
	Let $(M,d)$ be a \cat-space and take $x \in M$. The \emph{shadow} of a
	direction $\sigma \in \Sigma_x$ is given by the set
	\begin{align*}
		\mathfrak{S}_\sigma = \mathfrak{S}_{x, \sigma} = \{ \tau \in \Sigma_x \
		\vert \ \angle_x(\sigma, \tau) = \pi \}
	\end{align*}
	We call a shadow \emph{non-trivial} if $\lvert \mathfrak{S}_{\sigma} \rvert > 1$.
	The point $x$ is called \emph{prismatic} if all directions have a non-trivial shadow.
\end{definition}

\begin{figure}[!h]
	\label{fig:prism}

	\centering
	\includegraphics[trim={0 2cm 0 2cm},clip]{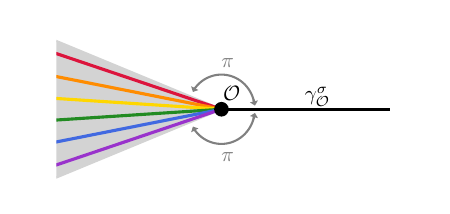}
	\caption{\it The cone point in a kale is prismatic, cf. Fig. \ref{fig:kalegeod} Each colored half line (in the shadow of the black half line) is a valid
		continuation of $\gamma_\mathcal{O}^\sigma$ resulting in a geodesic path, i.e. the
		corresponding directions lie in $\mathfrak{S}_{x, \sigma}$.}
\end{figure}

For the proof of Theorem \ref{theorem:rulingout} we need the following lemma, which is related to \cite[Corollary 4.4]{buildings}.

\begin{lemma} 
	\label{lemma:trivialshadow}

	Assume that $(M,d)$ is a geodesically complete and locally compact \cat~space, $\kappa \in \RR$, and that there is a point $x\in M$ with direction
	$\sigma \in \Sigma_x$ such that $\lvert \mathfrak{S}_{x,\sigma}\rvert = 1$.
	Then, there exists a Hadamard space $(M^\prime, d^\prime)$ with isometry
	\begin{align*}
		T_x\cong M^\prime \times \RR.
	\end{align*}
\end{lemma}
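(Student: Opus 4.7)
The strategy is to exploit the uniqueness of the antipode $\tau$ of $\sigma$ in $\Sigma_x$ to realise $\Sigma_x$ as a spherical suspension with poles $\sigma$ and $\tau$, and then to pass to the Euclidean cone, where a spherical suspension corresponds metrically to a product with $\RR$.

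By Proposition~\ref{prop:cat_dir}, local compactness and geodesic completeness of $(M,d)$ guarantee that $\Sigma_x$ is a compact, geodesically complete CAT$(1)$ space and that $T_x$ is a Hadamard space. Let $\tau$ denote the unique element of $\mathfrak{S}_{x,\sigma}$. Pick any $\xi \in \Sigma_x \setminus \{\sigma, \tau\}$; then $\alpha := \angle_x(\sigma, \xi) \in (0, \pi)$, and there is a unique minimising geodesic $c : [0, \alpha] \to \Sigma_x$ from $\sigma$ to $\xi$. Geodesic completeness extends $c$ to a local geodesic on $[0, \pi]$, and since local geodesics of length at most $\pi$ in a CAT$(1)$ space remain globally minimising, the endpoint satisfies $\angle_x(\sigma, c(\pi)) = \pi$, which by uniqueness of the antipode forces $c(\pi) = \tau$. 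Thus every $\xi \in \Sigma_x$ lies on a $\sigma$-to-$\tau$ geodesic of length exactly $\pi$, and
\[
\angle_x(\sigma, \xi) + \angle_x(\xi, \tau) = \pi.
\]

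Define the ``equator'' $\Sigma' := \{\xi \in \Sigma_x : \angle_x(\sigma, \xi) = \pi/2\}$. Sending $(\eta, t) \in \Sigma' \times [0, \pi]$ to the time-$t$ point of the unique $\sigma$-to-$\tau$ geodesic through $\eta$ yields a continuous surjection onto $\Sigma_x$ that collapses $\Sigma' \times \{0, \pi\}$ to the two poles. Applying CAT$(1)$ comparison to triangles with two vertices at the antipodal pair $\sigma, \tau$, whose reference triangles in $S^2$ degenerate into great-circle arcs through the poles, one verifies the spherical join identity
\[
\cos \angle_x\big((\eta_1, t_1), (\eta_2, t_2)\big) = \cos t_1 \cos t_2 + \sin t_1 \sin t_2 \cos \angle_x(\eta_1, \eta_2),
\]
which is precisely the statement that $\Sigma_x$ is isometric to the spherical join $\Sigma' * \{\sigma, \tau\}$. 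Passing to Euclidean cones and invoking the join--product correspondence $C_0(A * B) \cong C_0(A) \times C_0(B)$ (see e.g.\ \cite[I.5]{BH11}) together with $C_0(\{\sigma, \tau\}) \cong \RR$ then gives
\[
T_x \cong C_0(\Sigma') \times \RR =: M' \times \RR.
\]
Since $T_x$ is a Hadamard space and the product splits metrically, $M' = C_0(\Sigma')$ is itself a Hadamard space, proving the claim.

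The main technical obstacle is the spherical join identity above. Its ``$\geq$'' direction is a straightforward triangle-inequality computation in the suspension model, but the matching ``$\leq$'' direction requires propagating the CAT$(1)$ four-point condition along the whole one-parameter family of $\sigma$-to-$\tau$ geodesics parametrised by $\Sigma'$. The uniqueness of $\tau$ is indispensable here: if $\sigma$ admitted additional antipodes, two points of $\Sigma_x$ could be connected through distinct antipodes, yielding shortcuts incompatible with a genuine spherical suspension -- and this is exactly the failure mechanism that produces the stickiness observed on the kale of Example~\ref{ex:kale}.
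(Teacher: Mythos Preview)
Your approach is correct in spirit but takes a genuinely different route from the paper, and the key step is asserted rather than proved. The paper works entirely inside the Hadamard space $T_x$: it fixes the line $\gamma$ through $\mathcal{O}$ in directions $\sigma$ and $\tau$, and for each $y\in T_x\setminus\gamma(\RR)$ it \emph{explicitly constructs} a geodesic $\beta_y$ through $y$ (using the unique $\sigma$--$\tau$ geodesic in $\Sigma_x$ passing through $\dir_{\mathcal{O}}(y)$, which exists by exactly your extension argument) and then checks by direct cone-metric computation that $\beta_y$ is a unit-speed geodesic at constant distance from $\gamma$. This puts the paper in a position to invoke the product decomposition theorem for Hadamard spaces \cite[Theorem II.2.14]{BH11} and be done.

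By contrast, you stay in the CAT$(1)$ space $\Sigma_x$ and aim for the stronger structural statement that $\Sigma_x$ is the spherical suspension of its equator $\Sigma'$, after which the join--product correspondence for cones gives the splitting. This is cleaner conceptually, but your ``one verifies the spherical join identity'' hides exactly the work the paper carries out: CAT$(1)$ comparison only yields inequalities, and obtaining the \emph{equality} in the suspension formula is a rigidity statement that you do not prove. You even flag this as the ``main technical obstacle'' and describe the $\leq$ direction only in outline. In practice the cleanest way to establish that rigidity is precisely to pass to the cone and use the product decomposition theorem---i.e.\ to do what the paper does---so your argument as written either needs an external citation for a CAT$(1)$ suspension rigidity theorem, or it collapses back onto the paper's computation.
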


\begin{proof}
	For $\sigma \in \Sigma_x$ such that $\lvert \mathfrak{S}_\sigma\rvert = 1$  and $\tau \in \Sigma_x$, the only direction with $\angle_x(\sigma, \tau) = \pi$, define the geodesic $\gamma: \mathbb{R} \to T_x$ by
	\begin{align*}
		\gamma(t) = \left\{
		\begin{array}{ll}
			(\sigma, t) & \quad \text{if } t \geq 0, \\
			(\tau, -t)  & \quad \text{if } t < 0.    \\
		\end{array}
		\right.
	\end{align*}
	The assertion of the lemma then follows, if we show that every $y \in T_x \setminus\gamma(\RR)$ lies on a geodesic, parallel to $\gamma$, since then the product decomposition theorem for
	Hadamard spaces \cite[Theorem II.2.14]{BH11}, (which is applicable recalling that Proposition \ref{prop:cat_dir} guarantees that under our assumptions $T_x$ is a Hadamard space) asserts that there is a Hadamard space $M'$ such that $M$ is isometric to $M'\times \gamma(\RR)$.

	Hence, we now fix an arbitrary point $y \in T_x \setminus\gamma(\RR)$ and show that is lies on a geodesic with constant distance to $\gamma$.

	By our assumption, $\angle_\mathcal{O}(y,		\sigma) < \pi$. Since $\Sigma_x$ is a CAT(1)-space by Proposition \ref{prop:cat_dir} and, consequently, $\pi$-geodesic, and also geodesically complete, we have a minimizing geodesic $\alpha : [0, \pi] \to \Sigma_x$ with $\alpha(0) = \sigma$, $\alpha(\angle_\mathcal{O}(\sigma, y)) = \dir_\mathcal{O}(y)$ and $\alpha(\pi) = \tau$.

	\paragraph{First case, assume that $\angle_\mathcal{O}(\sigma, y) \neq \pi / 2$.} Then, let $r_y := \widetilde{d}(\mathcal{O}, y)$, where $\widetilde{d}$ denotes the distance on $T_x$ and set
	\begin{align*}
		y^\prime := \big (\alpha(\pi - \angle_\mathcal{O}(y,
				\sigma)), r_y \big ) \in T_x\setminus\gamma(\RR)\,.
	\end{align*}
	Set $l := d(y, y^\prime)/2$.
	Then,
	$\beta_y: \RR \to T_x$ with
	\begin{align*}
		\beta_y(t) = (\theta(t), r(t)) \quad t \in \RR\,,
	\end{align*}
	where $r(t) = \sqrt{t^2+ r_y^2 - l^2}$,
	$\theta(t) = \alpha(\arccos(t / r(t)))$ is a curve through $y=\left(\theta(l), r_y\right) =\beta(l)$ and $y'=\beta(-l)$, since
	\begin{align*}
		\arccos\left(\frac{l}{r_y}\right) & = \arccos\left(\frac{\widetilde{d}(y,y^\prime)}{2 r_y}\right)                                                           \\
		                                  & = \arccos\left(\frac{\sqrt{r_y^2 + r_y^2 - 2 r_y^2 \cos(\pi -2 \cdot \angle_\mathcal{O}(y, \sigma)))}}{2 r_y}\right)    \\
		                                  & = \arccos\left(\sqrt{\frac{1 + \cos(2 \cdot \angle_\mathcal{O}(y, \sigma))}{2}}\right)                                  \\
		                                  & = \arccos\left( \sqrt{\frac{2 + (e^{2i\angle_\mathcal{O}(y, \sigma)} + e^{2i\angle_\mathcal{O}(y, \sigma)})}{4}}\right) \\
		                                  & = \arccos\left( \sqrt{\frac{(e^{i\angle_\mathcal{O}(y, \sigma)} + e^{i\angle_\mathcal{O}(y, \sigma)})^2}{4}} \right)    \\
		                                  & = \arccos\left( \cos(\angle_\mathcal{O}(y, \sigma)) \right)                                                             \\
		                                  & = \angle_\mathcal{O}(y, \sigma) \,,
	\end{align*}
	where, we used that $\angle_\mathcal{O}(y,y^\prime) = \lvert \pi - 2 \cdot \angle_{O}(\sigma, y)\rvert$ and $\cos(u)=\cos(-u)$, $u \in \RR$, for the second equality and for the third that $\cos(\pi - u) = - \cos(u)$ for $u \in \RR$.


	Further, $\beta_y$ is a geodesic, since for arbitrary $t_1, t_2 \in \RR$ with
	$t_1 \leq t_2$, we have
	\begin{align*}
		\widetilde{d}^2(\beta_y(t_1), \beta_y(t_2)) & = r(t_1)^2 + r(t_2)^2 - 2 r(t_1)r(t_2) \cos\left(\angle_{\mathcal{O}}(\theta(t_1),\theta(t_2)\right)                                                    \\
		                                            & = r(t_1)^2 + r(t_2)^2 - 2 r(t_1)r(t_2) \cos\left(\arccos\left(\frac{t_2}{r(t_2)}\right) - \arccos\left(\frac{t_2}{r(t_2)}\right)\right)                 \\
		                                            & = r(t_1)^2 + r(t_2)^2 - 2 r(t_1)r(t_2) \left(\frac{t _1 t_2}{r(t_1)r(t_2)} - \sqrt{1 - \frac{t_1^2}{r(t_1)^2}}\sqrt{1 - \frac{t_2^2}{r(t_2)^2}})\right) \\
		                                            & = t_1^2 + t_2^2 + 2(r_y^2 - l^2) - 2 t_1 t_2 - 2 \sqrt{r(t_1)^2 - t_1^2}\sqrt{r(t_2)^2 - t_2^2}                                                         \\
		                                            & = (t_1 - t_2)^2 + 2(r_y^2 - l^2) - 2 (r_y^2 - l^2)                                                                                                      \\
		                                            & = (t_1 - t_2)^2\,,
	\end{align*}
	where the first equality follows from Definition \ref{Def:Cones}, the second, since  $\theta$ is a geodesic, and the third utilized $\cos(u-v) = \cos(u) \cos(v) - \sin(u) \sin(v)$ and 	$\sin(\arccos(w)) = \sqrt{1 - w^2}$ for $u, v,w \in \RR$ with $w^2\leq 1$.


	Finally, we show that $\beta_y$ has constant distance to $\gamma_y$. For arbitrary $s,t \in \RR$, due to Definition \ref{Def:Cones},  we have 
	\begin{align*}
		\widetilde{d}^2(\beta_y(t), \gamma(s)) & =
		s^2 + r(t)^2 - 2|s| r(t) \cos(\angle_\mathcal{O}(\beta(t), \gamma(s))                                                           \\
		                                       & =
		\begin{cases}
			s^2+t^2 + r_y^2 - l^2 + 2s r(t) \cos(\angle_\mathcal{O}(\tau, \theta(t))) \quad   & s < 0 \,, \\
			s^2+t^2 + r_y^2 - l^2 - 2s r(t) \cos(\angle_\mathcal{O}(\sigma, \theta(t))) \quad & s \geq 0  \\
		\end{cases}
		\\
		                                       & =
		\begin{cases}
			s^2+t^2 + r_y^2 - l^2 + 2s r(t) \cos(\pi - \angle_\mathcal{O}(\sigma, \theta(t))) \quad & s < 0 \,, \\
			s^2+t^2 + r_y^2 - l^2 - 2s r(t) \cos(\angle_\mathcal{O}(\sigma, \theta(t))) \quad       & s \geq 0  \\
		\end{cases}
		\\
		                                       & = s^2+t^2 +  r_y^2 - l^2 - 2s r(t) \cos\left(\arccos\left(\frac{t}{r(t)}\right)\right) \\
		                                       & = (s-t)^2 +r_y^2 - l^2\,.                                                              
	\end{align*}
	The minimum is attained for $s=t$ and this minimum is indeed constant over $t\in\RR$, completing the proof of the lemma in the first case.

	\paragraph{Second case, assume that $\angle_\mathcal{O}(\sigma, y) = \pi / 2$.}

	Then, consider the point $z = (\alpha(\pi/4), \sqrt{2} r_y)$. Construct the geodesic $\beta_z : \RR \to T_x$ and the point $z^\prime$ as before.
	We obtain
	\begin{align*}
		\beta_z(0) & = \left(\alpha(\pi/2), \sqrt{d^2(\mathcal{O}, z) - \frac{d^2(z, z^\prime)}{4}} \right) = \left(\dir_\cO (y), \sqrt{d^2(\cO, z) - \frac{2 d^2(\cO, z) - 2 d^2(\cO, z) \cos(\pi/2)}{4}}\right) \\
		           & = \left(\dir_\cO (y), \frac{d(\cO, z)}{\sqrt{2}}\right) = (\dir_\cO, r_y) = y\,.
	\end{align*}

\end{proof}

\begin{theorem} 
	\label{theorem:rulingout}
	In Scenario \ref{set:3} (recall that $\mean \in \Bkapn$) there exists a distribution $\prb \in \Qkapn$ that is
	$(\Qkapn, W_1)$-sticky at $\mean \in M$ if and only if $\mean$ is prismatic.
\end{theorem}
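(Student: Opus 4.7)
The plan is to reduce to the tangent cone. By Corollary~\ref{cor:stickytangent}, a $(\Qkapn, W_1)$-sticky distribution on $M$ at $\mean$ exists iff one exists on the Hadamard space $T_\mean$ at $\cO$, which inherits the Scenario~\ref{set:3} assumptions by Proposition~\ref{prop:cat_dir}. Applying Theorem~\ref{theorem:mainthm} together with Theorem~\ref{theorem:meanderivs} inside $T_\mean$ further rephrases the question as: does there exist $\nu \in \cP(\Sigma_\mean)$, regarded as a measure on the unit sphere in $T_\mean$, such that
\[\int_{\Sigma_\mean} \cos\bigl(\angle(\sigma, \tau)\bigr) \, d\nu(\tau) \;<\; 0 \qquad \text{for every } \sigma \in \Sigma_\mean?\]

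I would handle the ``only if'' direction by contrapositive. If $\mean$ is not prismatic, fix $\sigma_0 \in \Sigma_\mean$ with trivial shadow $\mathfrak{S}_{\sigma_0} = \{\tau_0\}$, whence Lemma~\ref{lemma:trivialshadow} yields an isometric splitting $T_\mean \cong M' \times \RR$ mapping $\sigma_0$ and $\tau_0$ to the unit vectors $\pm 1$ of the $\RR$-factor. A direct computation from Definition~\ref{Def:Cones} gives $\phi_{\cO, \sigma_0}(m, r) = r$, and since Fr\'echet means in Hadamard products factor into their marginal means (\cite[Lemma~3.9, Proposition~5.5]{Sturm}), any $\nu$ on $T_\mean$ with mean $\cO$ has vanishing $\RR$-marginal mean and hence $\nabla_{\sigma_0} F_\nu(\cO) = 0$, so no such $\nu$ can be directionally sticky.

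For the converse, assume $\mean$ is prismatic. My plan is a compactness-based construction in $\Sigma_\mean$, which is compact by Proposition~\ref{prop:cat_dir}(i). For each $\sigma \in \Sigma_\mean$, prismaticity supplies two distinct antipodes $\tau_\sigma \neq \tau'_\sigma \in \mathfrak{S}_\sigma$, and the continuous (Corollary~\ref{corollary:contfrech}) function $g_\sigma(\theta) = \cos\angle(\theta, \tau_\sigma) + \cos\angle(\theta, \tau'_\sigma)$ satisfies $g_\sigma(\sigma) = -2$, hence is strictly negative on an open neighborhood $U_\sigma \ni \sigma$. A finite subcover $U_{\sigma_1}, \ldots, U_{\sigma_N}$ then delivers finitely many antipodal pairs from which to assemble the candidate $\nu$.

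The main obstacle is making $\int \cos\angle(\theta, \tau)\, d\nu(\tau)$ uniformly strictly negative in $\theta$: at each $\theta$ only one term $g_{\sigma_i}(\theta)$ is guaranteed negative, while each of the remaining pair-contributions may be as large as $+2$. My plan to close this gap is either to replace the finite mixture with an integral $\nu = \int_{\Sigma_\mean} (\delta_{\tau_\sigma} + \delta_{\tau'_\sigma}) \, d\lambda(\sigma)$ against a suitable auxiliary probability measure $\lambda$ and exploit joint continuity of $(\sigma, \theta) \mapsto g_\sigma(\theta)$ to transfer local negativity to all of $\Sigma_\mean$, or to invoke Hahn--Banach / LP duality: the absence of a sticky $\nu$ should yield a balanced measure $\lambda$ with $\int \cos\angle(\cdot, \sigma)\, d\lambda(\sigma) \equiv 0$, and then convexity in the Hadamard space $T_\mean$ combined with geodesic completeness would produce a parallel line through $\cO$ via the product decomposition theorem \cite[Theorem~II.2.14]{BH11}, hence a direction with trivial shadow, contradicting prismaticity.
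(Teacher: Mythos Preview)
Your ``only if'' direction is essentially the paper's argument: the trivial-shadow hypothesis feeds Lemma~\ref{lemma:trivialshadow}, the product splitting transfers to the Fr\'echet mean via \cite[Proposition~5.5]{Sturm}, and then perturbing along the $\RR$-factor kills perturbation (equivalently $W_1$-) stickiness of the pushforward.

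Your ``if'' direction has a real gap, and you have correctly located it yourself. Your building block $\tfrac{1}{2}(\delta_{\tau_\sigma}+\delta_{\tau'_\sigma})$ is designed so that its contribution $g_\sigma(\theta)$ is strictly negative \emph{near} $\sigma$, but you have no control at all on $g_\sigma$ away from $\sigma$; in particular nothing prevents it from being as large as $+2$. Your proposed remedies (integrating the building blocks against an auxiliary $\lambda$, or a Hahn--Banach/duality argument producing a parallel line) are both speculative and neither is worked out; the duality route in particular would need a separate argument to pass from ``$\int\cos\angle(\cdot,\sigma)\,d\lambda(\sigma)\equiv 0$'' to an honest geodesic line in $T_\mean$, which is not immediate.

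The paper closes this gap with one extra idea: make each building block already have Fr\'echet mean $\mean$. Concretely, for each $\tau\in\Sigma_\mean$ pick an antipode $\sigma\in\mathfrak{S}_\tau$ and a second $\tau'\neq\tau$ in $\mathfrak{S}_\sigma$, and set
\[
\prb_\tau=\tfrac{1}{2}\delta_y+\tfrac{1}{4}\delta_z+\tfrac{1}{4}\delta_{z'}
\]
with $y,z,z'$ at fixed distance $R$ from $\mean$ in directions $\sigma,\tau,\tau'$. Since $\mean$ is the midpoint of both $(y,z)$ and $(y,z')$, it is the Fr\'echet mean of $\prb_\tau$, so Theorem~\ref{theorem:meanderivs} forces $\nabla_\upsilon F_{\prb_\tau}(\mean)\ge 0$ for \emph{every} $\upsilon\in\Sigma_\mean$, while a direct computation gives $\nabla_\tau F_{\prb_\tau}(\mean)=\tfrac{R}{4}(1-\cos\angle_\mean(\tau,\tau'))>0$. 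Now the finite-cover averaging you already wrote down works immediately: in $\prb=\tfrac{1}{K}\sum_i\prb_{\tau_i}$ every summand contributes a nonnegative directional derivative and at least one contributes a strictly positive one, so $\nabla_\upsilon F_\prb(\mean)>0$ for all $\upsilon$. The missing ingredient is precisely the balancing point $y$ with weight $\tfrac{1}{2}$.
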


\begin{proof}

	First, we show ``$\mean$ prismatic $\Rightarrow$ $\exists \prb$ $(\Qkapn,W_1)$ sticky''.

	Pick arbitrary $\tau \in \Sigma_\mean$ and $\sigma= \sigma_\tau \in
		\mathfrak{S}_{\mu,\tau}$. Then $\tau \in \mathfrak{S}_{\mu,\sigma}$ and by hypothesis,
	there is another $\tau^\prime \in \mathfrak{S}_{\mu,\sigma}$ with $\tau \neq
		\tau^\prime$.

	Let $R >0$ be small enough such that $\overline{B_R(\mean)} \subset \Bkap$.
	Pick $y, z, z^\prime$ with	
	\begin{align*}
		\dir_\mean(y)        & = \sigma,                               \\
		\dir_\mean(z)        & = \tau,                                 \\
		\dir_\mean(z^\prime) & = \tau^\prime,                          \\
		d(\mean, y)          & = d(\mean, z) = d(\mean, z^\prime) = R, \\
	\end{align*}
	giving rise to the probability measure
	\begin{align*}
		\prb_\tau = \frac{1}{2} \left(\frac{1}{2}\left(\delta_{y} +
			\delta_{z}\right) + \frac{1}{2}\left(\delta_{y}
			+ \delta_{z^\prime}\right) \right).
	\end{align*}

	The point $\mean$ is the geodesic midpoint of both $y$ and $z$
	and $y$ and $z^\prime$, respectively. Consequently, we have that $$\mean = \fb\left(\frac{1}{2}\left(\delta_{y} +
			\delta_{z}\right)\right) = \fb\left(\frac{1}{2}\left(\delta_{y} +
			\delta_{z^\prime}\right)\right).$$
	Thus, $\mean = \mathfrak{b}(\prb_\tau)$ as
	\begin{align*}
		\min_{v \in M} F_{\prb_\tau}(v) & = \min_{v \in M} \frac{1}{2} \left(F_{\frac{1}{2}\left(\delta_{y} +
				\delta_{z}\right)}(v) + F_{\frac{1}{2}\left(\delta_{y} +
		\delta_{z^\prime}\right)}(v)\right)                                                                       \\
		                                & \geq  \frac{1}{2} \left(\min_{v \in M} F_{\frac{1}{2}\left(\delta_{y} +
				\delta_{z}\right)}(v) +\min_{v \in M} F_{\frac{1}{2}\left(\delta_{y} +
				\delta_{z^\prime}\right)}(v)\right).
	\end{align*}
	Furthermore, as we assumed that $\angle_\mean(\tau, \tau^\prime) > 0$, observe, due to Theorem \ref{theorem:pull},
	\begin{align*}
		\nabla_{\tau} F_{\prb_\tau}(\mean)
		 & = -\frac{R}{4}\Big(\cos \angle_\mean(\tau,\sigma) +\cos \angle_\mean(\tau,\tau)+\cos \angle_\mean(\tau,\sigma)+\cos \angle_\mean(\tau,\tau')\Big) \\
		 & = \frac{R}{4}\Big(1 - \cos \angle_\mean(\tau,\tau')\Big) > 0.
	\end{align*}
	As $\Sigma_\mean \to \RR, \upsilon \mapsto \nabla_{\upsilon}F_{\prb_\tau}(\mean)$ is continuous (Corollary \ref{corollary:contfrech}), there is an open set
	$\tau \in U_\tau \subset \Sigma_\mean$, such that
	$$\nabla_{\upsilon}F_{\prb_\tau}(\mean) >0 \mbox{ for all }\upsilon \in U_\tau\,.$$
	Recalling that $\tau\in \Sigma_\mean$ was arbitrary, take the union over all $\tau\in \Sigma_\mean$, to  obtain an open cover,
	i.e.
	\begin{align*}
		\bigcup_{\tau \in \Sigma_\mean} U_{\tau} = \Sigma_\mean.
	\end{align*}
	But since the space of directions is compact by Proposition
	\ref{prop:cat_dir}, already a finite number of directions $\tau_1, \ldots,
		\tau_K$ yield a covering
	\begin{align*}
		\Sigma_\mean = \bigcup_{i = 1}^K U_{\tau_i}.
	\end{align*}
	Thus, the probability measure $\prb = \frac{1}{K} \sum_{i=1}^K
		\prb_{\tau_i}$, which is supported on $\overline{B_{R}(\mean)} \subset
		\Bkap$, has a unique mean that is $(\Qkapn, W_1)$-sticky by Theorem
	\ref{theorem:mainthm} as all directional derivatives are positive.

	%
	Now the other direction ``$\exists \prb$ sticky at $\mean$ $\Rightarrow \lvert \mathfrak{S}_{\mu, \sigma} \rvert > 1 $ for all $\sigma \in \Sigma_\mean$''.
	Let $\prb \in \wst{M}$ be $(\Qkapn, W_1)$-sticky sticky at $\mean$. Due to Corollary \ref{cor:stickytangent} and Theorem 	\ref{theorem:mainthm}, the pushforward measure $\widetilde{\prb} = \log_\mean \# \prb$ must be $T_\mean$-perturbation sticky at $\mathcal{O} \in	T_\mean$.

	We now show that there cannot be any direction $\sigma \in \Sigma_\mean$ such that there is only one single $\tau \in \mathfrak{S}_{\mu,\sigma}$, i.e. $\angle_\mean(\sigma, \tau) = \pi$. For otherwise, due to Lemma \ref{lemma:trivialshadow}, we know that
	$T_\mean \cong M^\prime \times \RR$ for some Hadamard space $(M^\prime,d^\prime)$. Let $\psi: T_\mean \to M^\prime \times \RR$ be such an isometry and
	w.l.o.g. $\psi(\mathcal{O}) =
		(\mathcal{O}^\prime,0)$. However, we have by
	\cite[Proposition 5.5]{Sturm} for any $t \in (0,1)$ that
	\begin{align*}
		\mathfrak{b}((1-t) \psi\#\widetilde{\prb} + t \cdot \delta_{(\mathcal{O},1)}) = (\mathcal{O},t).
	\end{align*}
	Therefore, $\psi\#\widetilde{\prb}$ is not $M^\prime \times \RR$-perturbation sticky at $\psi(\cO)$, and by Theorem
	$\ref{theorem:mainthm}$ also not $(\wst[1]{M^\prime \times \RR}, W_1)$-sticky.
	As $\psi \#: \wst{T_\mu} \to M^\prime \times \RR$ is an isometry \cite[Lemma 5.1]{Sturm}, this implies that
	$\widetilde{\prb}$ is not $(T_\mean, W_1)$-sticky, leading to a contradiction.
\end{proof}


Theorem \ref{thm:rulingoutmfd} and \ref{theorem:mainthmdir} show that stickiness
appears only in rather unwieldy spaces and thus, in part answers questions raised in \cite{HE_MFO_2014}.

%
%

This following result is a direct consequence of Theorem \ref{theorem:rulingout}.

\begin{corollary} 
	\label{cor:ruling-out-main}

	In Scenario \ref{set:3}, there are no distributions that are $(\Qkapn, W_1)$-sticky at $\mean\in \Bkapn$ if every shadow in $\Sigma_\mean$ is trivial.
\end{corollary}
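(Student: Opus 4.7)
The plan is to derive this directly from Theorem \ref{theorem:rulingout} by contraposition. Recall that a point $\mean$ is called \emph{prismatic} if every direction $\sigma\in\Sigma_\mean$ has a non-trivial shadow, i.e.\ $\lvert \mathfrak{S}_{\mean,\sigma}\rvert > 1$. The hypothesis of the corollary asserts that every shadow in $\Sigma_\mean$ is trivial, which means there is at least one direction $\sigma\in\Sigma_\mean$ (indeed, every direction) whose shadow is a singleton. In particular, $\mean$ fails to be prismatic.

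Next, I invoke Theorem \ref{theorem:rulingout}, which, under Scenario \ref{set:3}, gives the equivalence: there exists a $(\Qkapn,W_1)$-sticky distribution at $\mean$ if and only if $\mean$ is prismatic. Since $\mean$ is not prismatic by the hypothesis, the ``only if'' direction rules out the existence of any such sticky distribution.

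Thus the proof is essentially a one-line contrapositive: not prismatic $\Rightarrow$ no sticky measure exists at $\mean$. There is no real obstacle here, as all the hard work (the construction of a sticky distribution in the prismatic case via the pairs $y,z,z'$ with controlled directional derivatives, and the product-decomposition argument ruling out stickiness in the non-prismatic case) has already been carried out in Theorem \ref{theorem:rulingout} and Lemma \ref{lemma:trivialshadow}. The only subtlety to flag is that the hypothesis ``every shadow in $\Sigma_\mean$ is trivial'' is stronger than needed: a single direction with trivial shadow already suffices to conclude via the product-decomposition half of Theorem \ref{theorem:rulingout}, but of course the stronger hypothesis immediately implies this weaker one, so the corollary follows at once.
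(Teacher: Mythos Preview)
Your proposal is correct and matches the paper's own treatment: the paper simply states that the corollary is a direct consequence of Theorem \ref{theorem:rulingout}, which is exactly the contrapositive argument you spell out. Your additional observation that a single trivial shadow already suffices (so the hypothesis is stronger than necessary) is accurate and not made explicit in the paper.
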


Corollary \ref{cor:ruling-out-main} is a generalization of  the previous Theorem \ref{thm:rulingoutmfd}
on Riemannian manifolds, that can be
useful to rule out stickiness in more general manifolds.

%
%
%
%
%

\begin{corollary}\label{cor:nostickymfd-curvature-bound-from-below}
	In Scenario \ref{set:3} assume that $(M,d)$ has no topological boundary and features a lower curvature bound. Then there is no $(\Qkapn, W_1)$-sticky distribution for any $x_0 \in M$.
\end{corollary}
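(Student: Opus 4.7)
The plan is to reduce the claim to the Riemannian manifold case already handled by Theorem~\ref{thm:rulingoutmfd}. Under the corollary's hypotheses, $(M,d)$ enjoys simultaneously an upper Alexandrov curvature bound (namely $\kappa$, built into Scenario~\ref{set:3} via the \cat~condition) and the given lower Alexandrov curvature bound, and it is geodesically complete, locally compact, and without topological boundary. These are precisely the hypotheses of the classical Berestovskii--Nikolaev theorem, which endows a space of two-sided bounded Alexandrov curvature without boundary with the structure of a $C^{1,\alpha}$-Riemannian manifold of some finite dimension $n$, whose sectional curvatures are bounded above by $\kappa$.

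Once this Riemannian structure has been obtained, I would conclude in either of two equivalent ways. The shortest is to apply Theorem~\ref{thm:rulingoutmfd} directly: it covers complete connected Riemannian manifolds with an upper sectional curvature bound, and rules out directional (hence also Wasserstein and perturbation) stickiness at any point. Alternatively, one can pass through Corollary~\ref{cor:ruling-out-main}: in the Riemannian setting, every space of directions $\Sigma_\mean$ is isometric to a round unit sphere $S^{n-1}$ in $T_\mean M$, every point of which admits a unique antipode, so $|\mathfrak{S}_{\mean,\sigma}|=1$ for all $\sigma\in\Sigma_\mean$; Corollary~\ref{cor:ruling-out-main} then yields the non-existence of $(\Qkapn,W_1)$-sticky distributions at any $\mean\in\Bkapn$, for every $x_0 \in M$.

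The main obstacle is the appeal to Berestovskii--Nikolaev, which is a nontrivial structural result in Alexandrov geometry. A more self-contained Alexandrov-geometric route would try to show directly that, for a space with lower curvature bound and no boundary, every geodesic extends uniquely through every point, so every direction has a unique antipode and every shadow is trivial; Corollary~\ref{cor:ruling-out-main} would then deliver the conclusion without recourse to Riemannian structure. However, even this reformulation rests on substantive Alexandrov-geometric input (uniqueness of two-sided geodesic extension at non-boundary points), so the Berestovskii--Nikolaev reduction is arguably the cleanest route.
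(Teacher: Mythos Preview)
Your proposal is correct and essentially matches the paper's approach: the paper invokes the Berestovskii--Nikolaev theorem (cited as \cite[Theorem 10.10.13]{burago}) to conclude that $(M,d)$ is a (thrice-differentiable) Riemannian manifold, then observes that the space of directions at any point is a sphere and applies Corollary~\ref{cor:ruling-out-main}. This is exactly your second route; your first route via Theorem~\ref{thm:rulingoutmfd} is a valid alternative that the paper does not spell out here.
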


\begin{proof}


	Spaces without topological boundary, that are of curvature bounded from below and above are
	thrice-differentiable manifolds \cite[Theorem 10.10.13]{burago}. As we
	assume geodesic completeness, $(M,d)$ cannot have a boundary. As such,
	the space of directions in such a space is also a sphere.
\end{proof}

\begin{remark}

	As we have seen in our scenario that stickiness 
	is not possible on manifolds, it occurs, however, on stratified spaces, as we have also seen, like the BHV spaces and the other spaces introduced in Section \ref{scn:3-flavors}. 

	On a similar note, Theorem \ref{thm:rulingoutmfd} implies that stickiness cannot occur in the space of symmetric positive definite matrices equipped the usual Killing geometry \cite[Chapter XII]{lang}. However, 	it has been conjectured that its subspace, the \emph{wald space} of phylogenetic forests \cite{wald1,wald2} 	features stickiness.

\end{remark}

\begin{remark}
	\label{example:partlysticky}

	In the proof of Theorem \ref{theorem:rulingout}, the individual
	measures $\prb_\tau$ are examples of what has been coined \emph{partly
		sticky} in for example \cite{HHL+13} and \cite{HMMN15}, meaning only some
	directional derivatives of the Fr\'echet function at the Fr\'echet mean
	are positive. We have seen that $\nabla_\tau F_{P_\tau} > 0$, however
	$\nabla_\sigma F_{P_\tau} = R\cdot (\frac{1}{2} - \frac{1}{2}) = 0$.
\end{remark}

One might ask for which distributions $\prb$ the classical condition $\nabla_\sigma F_\prb (x) =
	0$ holds for all directions $\sigma \in \Sigma_\mean$. We state the following
conjecture and prove it for special cases.

\begin{conjecture} 

	In Scenario \ref{set:3} let $\mean \in M$ be prismatic. 	Then for $\prb \in \Qkap$n, one has $\nabla_\sigma F_\prb(\mean) = 0$ for all 	$\sigma \in \Sigma_\mean$ if and only if $\prb = \delta_\mean$.
\end{conjecture}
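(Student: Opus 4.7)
The ``if'' direction is immediate: if $P=\delta_\mu$ then $F_P(x)=\tfrac12 d^2(\mu,x)$ and $\nabla_\sigma F_P(\mu) = -\phi_{\mu,\sigma}(\mu) = 0$ for every $\sigma\in\Sigma_\mu$. So the real content is the converse, and my plan is to transfer the problem to the tangent cone at $\mu$ and disintegrate onto the space of directions, where prismaticity becomes a usable constraint.

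First I would push $P$ forward under $\log_\mu$ to obtain $\tilde P = \log_\mu\# P$. By Remark \ref{rmk:pull-tangent-pull} the pull of $z$ in direction $\sigma$ agrees with the tangent pull of $\log_\mu z$ at $\cO$ in direction $\sigma$, so via identity (\ref{eq:directional-derivs-tangent-cone}) the hypothesis $\nabla_\sigma F_P(\mu)=0$ for all $\sigma\in\Sigma_\mu$ is equivalent to $\nabla_\sigma F_{\tilde P}(\cO)=0$ for all $\sigma\in\Sigma_\mu$. After this reduction I may assume the underlying space is the Hadamard space $\cl(T_\mu)$ (Lemma \ref{lemma:directions_cat}) with prismatic cone point $\cO$, and it suffices to prove $\tilde P = \delta_\cO$. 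Suppose otherwise; then the weighted total mass $R:=\int r\,d\tilde P(r,\theta)$ is strictly positive, and I would introduce the probability measure $\tilde Q$ on $\cl(\Sigma_\mu)$ defined by
\[
\tilde Q(A) \;=\; \frac{1}{R}\int_{\{(r,\theta)\,:\,\theta\in A\}} r\,d\tilde P(r,\theta), \qquad A\subseteq \cl(\Sigma_\mu)\ \text{Borel},
\]
so that Theorem \ref{theorem:pull} rewrites the hypothesis as
\begin{equation}
\int_{\cl(\Sigma_\mu)} \cos\bigl(\angle_\cO(\sigma,\theta)\bigr)\,d\tilde Q(\theta) \;=\; 0 \qquad \text{for every }\sigma\in\Sigma_\mu,
\label{eq:conj-disint}
\end{equation}
and, via Corollary \ref{corollary:contfrech} together with density, for every $\sigma\in\cl(\Sigma_\mu)$.

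Next I would exploit prismaticity to derive a contradiction from (\ref{eq:conj-disint}). For the $K$-spider with $K\ge 3$ (which is exactly when $\cO$ is prismatic) the direction space is discrete with all pairwise angles equal to $\pi$, and (\ref{eq:conj-disint}) collapses to the linear system $2q_i=\sum_{j=1}^{K}q_j$ for $i=1,\dots,K$, whose only solution is $q\equiv 0$, contradicting $\sum_i q_i = 1$; the same computation handles the open books and the kale in Examples \ref{ex:open-books} and \ref{ex:kale} via the folded moments noted in Remark \ref{remark:folded-moments-directional-derivatives}. For a general prismatic $\cO$, my strategy would be to pick some $\sigma_0\in\supp(\tilde Q)$, observe that by continuity of the angle the integrand in (\ref{eq:conj-disint}) at $\sigma=\sigma_0$ is bounded below by a positive constant on a neighborhood $U$ of $\sigma_0$ carrying positive $\tilde Q$-mass, deduce the existence of compensating mass in the far-side set $\{\theta:\angle_\cO(\sigma_0,\theta)>\pi/2\}$, and then iterate the same argument at the at least two antipodal directions $\tau\in\mathfrak{S}_{\sigma_0}$ guaranteed by prismaticity, which should yield independent sources of constraints.

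The hard part is turning the pointwise vanishing (\ref{eq:conj-disint}) into a global contradiction from the bare geometric input that every direction has at least two antipodes. Lemma \ref{lemma:trivialshadow} guarantees that the tangent cone at $\cO$ splits off no Euclidean factor, and this non-splitting is morally what should force any direction measure $\tilde Q$ to be ``off-center''; but making the argument go through in general appears to need either a polyhedral or finite-dimensional structure on $\cl(\Sigma_\mu)$ (so that the family of constraints (\ref{eq:conj-disint}) indexed by $\sigma$ reduces to a finite linear system whose unique solution is zero), or some decomposition-of-unity type argument on $\cl(\Sigma_\mu)$ compatible with the shadow relation. Without such an additional ingredient, handling measures $\tilde Q$ that are diffuse across infinitely many mutually non-antipodal shadow classes is where the argument appears to genuinely stall, which is presumably why the general statement is posed only as a conjecture.
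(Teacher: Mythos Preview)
You correctly recognize that the statement is a conjecture and that the general argument stalls; the paper does not prove it in full either. Your reduction to the tangent cone and the disintegration yielding a probability measure $\tilde Q$ on $\cl(\Sigma_\mu)$ with $\int\cos(\angle_\cO(\sigma,\theta))\,d\tilde Q(\theta)=0$ for all $\sigma$ are sound and give a clean reformulation.

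The paper's established special case is broader than yours, however. Rather than assuming $\Sigma_\mu$ is discrete, the paper assumes only that $\Sigma_\mu$ decomposes into $k\ge 2$ (necessarily pairwise $\pi$-separated) connected components $\Sigma_\mu^1,\dots,\Sigma_\mu^k$. Writing $m_i=\int_{T_\mu^i}d(\cO,z)\,d\tilde\prb$ and $\tilde m_i=\int_{T_\mu^i}d(\cO,z)\cos\angle_\cO(\sigma_i,z)\,d\tilde\prb$ for one fixed $\sigma_i\in\Sigma_\mu^i$, summing the $k$ vanishing directional derivatives gives $(k-2)\sum_i m_i=\sum_i(\tilde m_i-m_i)$; the left-hand side is nonnegative and the right-hand side nonpositive, so both vanish, which (after a short extra argument for $k=2$ exploiting prismaticity) forces all $m_i=0$. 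Your $K$-spider computation is the specialization in which each component is a single point, so that $\tilde m_i=m_i$ automatically and your linear system $2q_i=\sum_j q_j$ drops out directly.

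Your assertion that ``the same computation handles the open books and the kale'' is too quick. For open books $\mathfrak{B}_K^d$ with $d\ge 2$, points on the spine are \emph{not} prismatic (a pure $\RR^{d-1}$-direction has only its Euclidean antipode in its shadow), so the conjecture's hypothesis fails there and the claim is vacuous. For the kale, $\Sigma_\cO$ is a \emph{connected} circle of length $\alpha>2\pi$, so neither your discrete linear system nor the paper's disconnected-components argument applies; showing that the cosine-integral constraint has no nontrivial solution on that circle would require a genuinely different analysis, which you have not supplied. Your final paragraph accurately identifies where the general argument runs out.
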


One direction is trivial: if $\prb= \delta_\mean$, then also $\nabla_\sigma
	\prb(\mean) = 0$ for all $\sigma \in \Sigma_\mean$. We shall provide a proof
for the other direction for the special case that $\Sigma_\mean$ consists of
at least two disconnected components.

Under the assumptions of Scenario \ref{set:3}, we have that the space of directions at a point $x$ is a
CAT(1) space and as such $\pi$-geodesic. However, it might still be a disconnected space as long as the pairwise distances between the connected components is $\pi$. The simplest example for such a space would be the space of directions at the origin of a $K$-spider, consisting of $K$ points with pairwise distance $\pi$. Recall that $\Sigma_x$ is compact in Scenario \ref{set:3}, due to Proposition \ref{prop:cat_dir}, 
and thus, may only consist of finitely many connected components.

\begin{lemma} 

	In Scenario \ref{set:3} let $\mean \in M$ be prismatic. Furthermore, assume that $\Sigma_\mean$ consists of pairwise disjoint connected components $\Sigma_\mean^1, \ldots, \Sigma_\mean^k$. Then for $\prb \in \Qkapn$, one has
	$\nabla_\sigma F_\prb(\mean) = 0$ for all $\sigma \in \Sigma_\mean$ if and
	only if $\prb = \delta_\mean$.
\end{lemma}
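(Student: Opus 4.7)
The ``if'' direction is immediate: $\nabla_\sigma F_{\delta_\mean}(\mean) = 0$ for all $\sigma \in \Sigma_\mean$. For the converse, the plan is to exploit the fact that in the $\pi$-geodesic CAT(1) space $\Sigma_\mean$ (Proposition \ref{prop:cat_dir}), directions in distinct connected components must be at angle exactly $\pi$: if $\sigma \in \Sigma_\mean^i$ and $\tau \in \Sigma_\mean^j$ with $i \neq j$ had $\angle_\mean(\sigma, \tau) < \pi$, the $\pi$-geodesic property would supply a connecting geodesic, placing them in the same component. Since Alexandrov angles lie in $[0, \pi]$, we obtain $\angle_\mean(\sigma, \tau) = \pi$ for directions in different components.

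Partition $\supp(\prb) \setminus \{\mean\}$ into $M_i := \{z : \dir_\mean(z) \in \Sigma_\mean^i\}$ and set $D_i := \int_{M_i} d(\mean, z)\,\diff \prb(z) \geq 0$. By Theorem \ref{theorem:pull}, for $\sigma \in \Sigma_\mean^j$, splitting the integral over the $M_i$ and using $\cos \pi = -1$ on $M_i$ for $i \neq j$ gives
\begin{align*}
0 = \nabla_\sigma F_\prb(\mean) = -\int_{M_j} d(\mean, z) \cos \angle_\mean(\sigma, \dir_\mean(z))\,\diff \prb(z) + \sum_{i \neq j} D_i.
\end{align*}
Since $\cos \leq 1$, we obtain $\sum_{i \neq j} D_i \leq D_j$ for each $j$. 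Summing over $j = 1, \ldots, k$ yields $(k-1) \sum_i D_i \leq \sum_i D_i$, i.e., $(k-2) \sum_i D_i \leq 0$. For $k \geq 3$ this forces $D_i = 0$ for all $i$; since $d(\mean, z) > 0$ on $M_i$, this means $\prb(M_i) = 0$, and hence $\prb = \delta_\mean$.

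The case $k = 2$ is more delicate. There the pair of inequalities yields $D_1 = D_2$, and the equality case in $\int_{M_j} d(\mean, z) \cos \angle_\mean(\sigma, \dir_\mean(z))\,\diff \prb(z) \leq D_j$ forces $\angle_\mean(\sigma, \dir_\mean(z)) = 0$ for $\prb$-a.e.\ $z \in M_j$, i.e., $\dir_\mean(z) = \sigma$ a.e.\ on $M_j$, for every $\sigma \in \Sigma_\mean^j$. The prismatic hypothesis rules out both components being singletons, since then the shadow of the unique direction in $\Sigma_\mean^1$ would consist only of the unique direction of $\Sigma_\mean^2$, contradicting non-triviality. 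Thus some $\Sigma_\mean^j$ contains two distinct directions $\sigma \neq \sigma'$, and the equality condition then forces $\prb(M_j) = 0$; combined with $D_1 = D_2$, this gives also $\prb(M_{3-j}) = 0$, whence $\prb = \delta_\mean$.

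The main obstacle is precisely this $k = 2$ case: the crude additive inequality is not strict enough and yields no information when summed, so one must extract the equality case of $\cos \leq 1$ and invoke the prismatic hypothesis to exclude the pathological configuration of two singleton components.
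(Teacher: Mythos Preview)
Your proof is correct and follows essentially the same route as the paper: both exploit that directions in distinct components of $\Sigma_\mean$ lie at angle $\pi$, compute the directional derivatives as a within-component integral plus the sum of the other components' first moments, and sum the resulting inequalities to obtain $(k-2)\sum_i D_i \leq 0$; the $k=2$ case is then handled in both arguments by the equality condition in $\cos\leq 1$ together with the prismatic hypothesis. The only cosmetic differences are that the paper first pushes $\prb$ forward to the tangent cone via $\log_\mean$ (which you avoid, legitimately, since the pulls agree), and that the paper phrases the $k=2$ endgame as ``both components would be singletons'' whereas you argue ``some component is not a singleton, forcing its mass to vanish''---these are contrapositives of the same observation.
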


\begin{proof}

	Due to (\ref{eq:directional-derivs-tangent-cone}), 
	we can equivalently consider the pushforward $\log_\mu \sharp P$ of $\prb \in \Qkapn$ as well as the distance $\widetilde{d}$ on the tangent cone. For simplicity, we call both $\prb$ and $d$, respectively.
	Write $T_\mean^1, \ldots, T_\mean^k$ for the components of the tangent cone solely connected by
	paths over the cone point corresponding to $\Sigma_\mean^1, \ldots, \Sigma_\mean^k$, fix arbitrary directions $\sigma_i \in \Sigma_\mean^i$ for $1 \leq i \leq k$ and define
	\begin{align*}
		m_i              = \int_{T_\mean^i} d(\mathcal{O}, z) \ \diff \prb(z) \,, \quad
		\widetilde{m}_i  = \int_{T_\mean^i} d(\mathcal{O}, z) \cdot \cos(\angle_x(\sigma_i, z)) \ \diff \prb(z)\,.
	\end{align*}
	If	$\nabla_\sigma F_\prb(\mathcal{O}) = 0$ for all $\sigma \in \Sigma_\mean$, then
	\begin{align*}
		0 & = -\sum_{i=1}^k \nabla_{\sigma_i} F_\prb(\mathcal{O}) = \sum_{i=1}^k \left(\widetilde{m}_i - \sum_{\substack{j = 1 \\ j \neq i}}^k m_j \right)
		= \sum_{i=1}^k \widetilde{m}_i - (k-1) \sum_{i=1}^k m_i
	\end{align*}
	and, hence,
	\begin{align*}
		(k-2) \sum_{i=1}^k m_i = \sum_{i=1}^k (\widetilde{m}_i - m_i).
	\end{align*}

	Obviously $(k-2) \sum_{i=1}^k m_i \geq 0$, whereas $\sum_{i=1}^k
		(\widetilde{m}_i - m_i) \leq 0$ since $(\widetilde{m}_i - m_i) \leq 0$ for $1\leq i \leq k$. Thus, both sides must be equal to 0.

	If $k > 2$, it follows that $m_i = 0$ for all $i$ and hence $\prb = \delta_\mathcal{O}$.

	If $k = 2$, note that $\widetilde{m}_i = m_i$ for $i=1,k$. Suppose $\prb \neq \delta_\mathcal{O}$, then $m_1 \neq 0$ or $m_2 \neq 0$. If only one of them is $0$, say, $m_1 >  0 = m_2$, then 	$\widetilde{m}_2 =0$, too, and hence $0 = \nabla_{\sigma_2}
		F_\prb(\mathcal{O}) = m_1 > 0$, a contradiction.

	Hence, we have $m_1,m_2 > 0$. Then $\prb$-almost surely $\cos(\angle_\mathcal{O}(\sigma_i, \dir_\mathcal{O}(z))) = 1$ on $T_\mean^i$, which, since the choice of $\sigma_i$ was arbitrary, must hold for all $\sigma_i \in \Sigma_\mean^i$, $i=1,2$. This implies that each $\Sigma_\mean^1$ and $\Sigma_\mean^1$ can only contain a single element. As $k=2$, this contradicts the assumption that all directions have a non-trivial shadow. Therefore, $\prb = \delta_\mathcal{O}$ as asserted.

\end{proof}

\paragraph{No $D_f$ stickiness on unbounded \cat~spaces.}

Now comes the curious result, that on unbounded spaces there are no sticky $f$-divergence distributions for sufficiently regular $f$. To this end, for $\prb \in \wst{M}$, $0\leq t \leq 1$ and $y\in M$ recall from Definition \ref{def:sticky} the perturbed measure
$$ \prb_t^y = (1-t) \prb + t\delta_y.$$
In Scenario \ref{set:1} we always have that for every $\epsilon > 0$ and fixed $y\in M$ that there is
$t_{y,\epsilon} > 0$ such that $$ W_1(\prb,\prb_t^y) < \epsilon\mbox{ for all }0\leq t \leq t_{y,\epsilon}\,,$$
cf. (\ref{eq:Wasserstein-perturbtion}). In contrast, for regular $f$-divergences we
have the following.

\begin{lemma} 
	\label{lem:pertb-in-div-neighborhood}

	Suppose that $(M,d)$ is an unbounded geodesic metric space and let $\prb \in \wst{M}$, $0\leq t \leq 1$, $x \in M$ and $\epsilon > 0$. Then there is $t_\epsilon > 0$ and a sequence $y_1,y_2, \ldots \in M$ with $d(x,y_n) \to \infty$ such that
	$$D_f(\prb \Vert \prb^{y_n}_t) < \epsilon\mbox{ for all } 0\leq t < t_\epsilon\mbox{and }n\in \NN$$
	for all $f$ giving rise to an $f$-divergence as in $D_f$ Definition \ref{def:divergence}.

\end{lemma}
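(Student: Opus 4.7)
The plan is to exploit the fact that the explicit formula for $D_f(\prb \Vert \prb_t^y)$ derived in equation (\ref{eq:fdvirpert}) of the proof of Theorem \ref{theorem:immediateflvrs}(iii) depends on $y$ only through the atom weight $w_y = \prb(\{y\})$. In particular, if $y$ is chosen to be non-atomic, i.e.\ $w_y = 0$, the divergence reduces to an expression in $t$ alone, and so a sequence of non-atomic $y_n$ with $d(x,y_n)\to\infty$ will satisfy the bound uniformly once $t$ is small.

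Concretely, I would first specialize (\ref{eq:fdvirpert}) to $w_y = 0$, which collapses it to
\[
D_f(\prb \Vert \prb_t^y) \;=\; (1-t)\, f\!\left(\tfrac{1}{1-t}\right) + t\, f(0),
\]
an expression no longer depending on $y$. Using the hypotheses $\lim_{x\to 1} f(x) = f(1) = 0$ and $f(0) < \infty$ from Definition \ref{def:divergence}, the first summand tends to $f(1)=0$ and the second to $0$ as $t \searrow 0$. Hence there is $t_\epsilon \in (0,1)$ such that the above expression is strictly less than $\epsilon$ for every $0 \leq t < t_\epsilon$.

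Second, I would construct the sequence $(y_n)$ with the required properties. Since $\prb$ is a Borel probability measure, its set of atoms $A_\prb := \{y \in M : \prb(\{y\}) > 0\}$ is at most countable (for each $k\in\NN$ there can be at most $k$ points of mass larger than $1/k$). Because $(M,d)$ is unbounded and geodesic, for each $n \in \NN$ there is some $z_n \in M$ with $d(x,z_n) \geq n+1$ together with a minimizing unit-speed geodesic $\gamma_n : [0,d(x,z_n)] \to M$ joining $x$ to $z_n$. The restriction $\gamma_n|_{[n, n+1]}$ is injective (minimizing geodesics are injective, since $d(x,\gamma_n(s))=s$) and therefore has cardinality of the continuum, so it must contain at least one point $y_n \notin A_\prb$. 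This $y_n$ satisfies $w_{y_n}=0$ and $d(x, y_n) \geq n$. Combining both steps yields the lemma; the only conceptual subtlety is the availability of non-atomic points arbitrarily far from $x$, which is resolved at once by the uncountable-minus-countable argument on each geodesic ray.
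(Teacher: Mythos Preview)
Your proof is correct and follows essentially the same approach as the paper: pick non-atomic points $y_n$ with $d(x,y_n)\to\infty$ so that, via equation (\ref{eq:fdvirpert}), the divergence $D_f(\prb\Vert\prb_t^{y_n})$ becomes a function of $t$ alone that vanishes as $t\searrow 0$. You are in fact slightly more careful than the paper on two counts: you retain the term $t\,f(0)$ (the paper silently drops it, though the conclusion is unaffected since $f(0)<\infty$), and you explicitly use the geodesic hypothesis to guarantee uncountably many points at every large distance, whereas the paper simply asserts the existence of the sequence.
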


\begin{proof}

	Since every $\prb \in \mathcal{P}(M)$ has at most countably many point
	masses, there is a sequence $y_1,y_2, \ldots \in M$ with $d(x,y_n) \to
		\infty$ and $\prb\{y_n\} =0$ for all $n\in \NN$. Recalling Equation
	(\ref{eq:fdvirpert}),
	\begin{align*}
		D_f(\prb \Vert \prb^{y_n}_t) & = (1-t)	 \cdot f\left( \frac{1}{1- t}\right)  \mbox{ for all }n\in \NN
	\end{align*}
	which is independent of $y_n$,  the assertion follows at once.
\end{proof}

To be able to also consider distributions possibly supported on the whole \cat~space, even when $\kappa>0$, we shall
use the following result generalizing Theorem \ref{theorem:pull} and \ref{theorem:meanderivs}
for distributions that are possibly supported on the hole space.

\begin{theorem}
	\label{theorem:meanderivsII}

	Let $(M,d)$ be a \emph{geodesic} and \emph{locally compact} \cat~space, $\kappa \in \RR$ and let $\mean \in M$.
	Then $\nabla_\sigma F_\prb(\mean)$ is well-defined for all $\sigma \in \Sigma_\mean$ and $\prb \in \wst{M}$.
	Furthermore, if $\mean \in \fb(\prb)$, it holds
	\begin{align*}
		\nabla_\sigma F_\prb(\mean) \geq 0 \text{ for all } \sigma \in \Sigma_\mean\,.
	\end{align*}

\end{theorem}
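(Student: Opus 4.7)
The plan is to adapt the argument of Theorem \ref{theorem:pull} and Theorem \ref{theorem:meanderivsI} by only weakening the assumption $\prb(B_{R_\kappa}(\mean)) = 1$. For $\kappa \leq 0$ we have $R_\kappa = \infty$ and the statement is exactly Theorem \ref{theorem:pull} combined with Theorem \ref{theorem:meanderivsI}, so the interesting case is $\kappa > 0$, where $\prb$ may place mass outside $B_{R_\kappa}(\mean)$.

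First, I would establish well-definedness of $\nabla_\sigma F_\prb(\mean)$. Fix $\sigma \in \Sigma_\mean$ with unit-speed representative $\gamma : [0,a] \to M$ and some $y \in M$, and, as in Theorem \ref{theorem:pull}, work with the Fr\'echet difference $F_\prb^y$. Set
\[
f_t(z) := \frac{d^2(\gamma(t), z) - d^2(\mean, z)}{2t}, \qquad t \in (0, a].
\]
The triangle inequality yields the uniform bound $|f_t(z)| \leq d(\mean, z) + t/2$, exactly as in the proof of Theorem \ref{theorem:pull}, so that the family $\{f_t\}_{t \in (0,a]}$ is dominated by a $\prb$-integrable function because $\prb \in \wst[1]{M}$.

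Second, I would show pointwise convergence of $f_t(z)$ as $t \searrow 0$, splitting on whether $z$ lies in $B_{R_\kappa}(\mean)$ or not. For $z$ with $d(\mean, z) < R_\kappa$ the geodesic from $\mean$ to $z$ is unique by Proposition \ref{prop:catgeod}(i), and the first variation formula (\cite[Corollary 4.5.7]{BBI01}) gives $\lim_{t \searrow 0} f_t(z) = -d(\mean, z)\cos(\angle_\mean(\sigma, z))$, just as in Theorem \ref{theorem:pull}. For the remaining $z$ with $d(\mean, z) \geq R_\kappa$ (possible only for $\kappa > 0$), the geodesic from $\mean$ to $z$ need not be unique, but the generalised first variation formula \cite[Remark 4.5.12]{BBI01}, which is applicable because $(M,d)$ is a locally compact geodesic space, still ensures that $\lim_{t \searrow 0} \tfrac{d(\gamma(t),z)-d(\mean,z)}{t}$ exists, and a straightforward manipulation then gives existence of $\lim_{t\searrow 0} f_t(z)$. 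Dominated convergence now yields that
\[
\nabla_\sigma F_\prb(\mean) = \lim_{t \searrow 0} \int_M f_t(z)\, \diff \prb(z)
\]
exists, proving the first assertion.

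Third, for the inequality $\nabla_\sigma F_\prb(\mean) \geq 0$ in case $\mean \in \fb(\prb)$, I would copy the argument of Theorem \ref{theorem:meanderivsI}: since every minimizer of $F_\prb$ is also a minimizer of $F_\prb^y$ by Remark \ref{rmk:Frechet-diff}, we have $F_\prb^y(\gamma(t)) \geq F_\prb^y(\mean)$ for all $t \in [0, a]$, and dividing by $t$ and letting $t \searrow 0$ yields $\nabla_\sigma F_\prb(\mean) \geq 0$. The main obstacle in this plan is the handling of the far-away mass $z \notin B_{R_\kappa}(\mean)$ in the $\kappa > 0$ case, where the CAT($\kappa$) machinery (unique geodesics, convexity of $d^2$) is unavailable; but only existence of the pointwise limit of $f_t(z)$ is needed there, so the generalised first variation formula in locally compact geodesic spaces suffices.
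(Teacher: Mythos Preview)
Your proposal is correct and follows essentially the same route as the paper's own proof, which simply invokes \cite[Corollary 4.5.8]{BBI01} (in place of your combination of Corollary 4.5.7 and Remark 4.5.12) to obtain existence of $\nabla_\sigma d_z(\mean)$ for arbitrary $z$, and then refers back to the dominated-convergence argument of Theorem \ref{theorem:pull} and the direct argument of Theorem \ref{theorem:meanderivsI}. Your explicit case split on $d(\mean,z) \lessgtr R_\kappa$ is not strictly necessary, since the generalised first variation formula already covers both cases uniformly, but it does no harm.
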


\begin{proof}
	Due to \cite[Corollary 4.5.8]{BBI01}, we have that $\nabla_\sigma d_z(\mean)$ exists for every $\sigma \in \Sigma_\mean$.
	The remainder of the proof is as in Theorem \ref{theorem:pull} and \ref{theorem:meanderivs}.

\end{proof}

\begin{theorem} 
	\label{theorem:fdiver_unbounded}


	Suppose that $(M,d)$ is a locally compact and complete \emph{geodesic} \cat~space, $\kappa \in \RR$ and that $f$ gives rise to an $f$-divergence $D_f$ as in Definition \ref{def:divergence}.
	For $\mean \in M$, assume there is a minimizing geodesic $\gamma: [0,\infty) \to M$.
	Then, there is no distribution that is $(\wst{M},D_f)$-sticky on a closed $S \subset M$ with $\mean \in S$.

\end{theorem}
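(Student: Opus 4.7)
I argue by contradiction. Assume $P \in \wst{M}$ is $(\wst{M},D_f)$-sticky on a closed $S \ni \mu$; fix $\epsilon>0$ witnessing this, i.e.\ $D_f(P \Vert Q)<\epsilon \Rightarrow \fb(Q)\subseteq S$. The key leverage is Lemma \ref{lem:pertb-in-div-neighborhood}: perturbing $P$ by a point mass at an arbitrarily far point does not drive the $D_f$-divergence above $\epsilon$ once the mixing weight is small. Hence stickiness forces every such perturbation's Fr\'echet mean to lie in $S$, while the first-order optimality of Theorem \ref{theorem:meanderivsII} simultaneously forces those Fr\'echet means to escape to infinity, which will contradict the confinement to $S$.

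\paragraph{Key steps.} Using Lemma \ref{lem:pertb-in-div-neighborhood} with $x=\mu$ along the assumed geodesic ray $\gamma$, pick $t_0>0$ and a sequence $y_n=\gamma(n)$ (thinned, if necessary, to avoid the countably many atoms of $P$) so that $d(\mu,y_n)\to\infty$ and $D_f(P\Vert P_{t_0}^{y_n})<\epsilon$ uniformly in $n$. By stickiness, $\fb(P_{t_0}^{y_n})\subseteq S$; pick $\mu_n$ in this set. Apply Theorem \ref{theorem:meanderivsII} to $P_{t_0}^{y_n}$ at $\mu_n$ in the direction $\sigma_n = \dir_{\mu_n}(y_n)$; combined with the first variation formula $\nabla_{\sigma_n} F_{\delta_{y_n}}(\mu_n) = -d(\mu_n, y_n)$ this yields
\begin{equation*}
(1-t_0)\,\nabla_{\sigma_n} F_P(\mu_n) \;\geq\; t_0\,d(\mu_n, y_n).
\end{equation*}
Bounding the left side via the integral representation of Theorem \ref{theorem:pull} and $|\phi_{\mu_n,\sigma_n}(z)| \leq d(\mu_n, z) \leq d(\mu_n,\mu) + d(\mu, z)$ gives $\nabla_{\sigma_n} F_P(\mu_n) \leq d(\mu_n,\mu) + L_\mu$ with $L_\mu = \int d(\mu, z)\,dP(z) < \infty$. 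Coupling this with $d(\mu_n, y_n) \geq d(\mu, y_n) - d(\mu_n, \mu)$ and rearranging yields
\begin{equation*}
d(\mu_n, \mu) \;\geq\; t_0\,d(\mu, y_n) - (1-t_0)\,L_\mu \;\longrightarrow\; \infty,
\end{equation*}
so $(\mu_n)\subseteq S$ is unbounded. In the prototypical setting $S=\{\mu\}$ implicit in Scenarios \ref{set:2}--\ref{set:3}, this is an immediate contradiction with $d(\mu_n,\mu)=0$; more generally, any closed $S$ in whose bounded part around $\mu$ the Fr\'echet means should remain is contradicted by the same estimate.

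\paragraph{Main obstacle.} The delicate point is controlling $|\nabla_{\sigma_n} F_P(\mu_n)|$ \emph{uniformly} as the base point $\mu_n$ itself drifts off to infinity; this rests on the integral representation of Theorem \ref{theorem:pull} (immediate for $\kappa\leq 0$ since $R_\kappa=\infty$, and derivable from Theorem \ref{theorem:meanderivsII} together with the first variation formula for the distance function in the $\kappa>0$ case), and on trading growth of $\mu_n$ for growth of $y_n$ through the triangle inequality. A routine check is that $P_{t_0}^{y_n}\in\wst{M}$, which follows from $P\in\wst{M}$ and $y_n$ being a single point.
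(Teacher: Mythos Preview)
Your argument is correct for the case $S=\{\mu\}$ (which is what both you and the paper actually establish), but it is considerably more roundabout than the paper's own proof. The paper evaluates the directional derivative at the \emph{fixed} point $\mu$ rather than at the drifting Fr\'echet means $\mu_n$: with $\sigma=\dir_\mu(\gamma)$ one has
\[
\nabla_\sigma F_{P_t^y}(\mu)=(1-t)\,\nabla_\sigma F_P(\mu)-t\,d(\mu,y),
\]
which is negative once $d(\mu,y)>\frac{1-t}{t}\nabla_\sigma F_P(\mu)$, whence $\mu\notin\fb(P_t^y)$ by Theorem~\ref{theorem:meanderivsII}. Lemma~\ref{lem:pertb-in-div-neighborhood} then supplies such $y$ with $D_f(P\Vert P_t^y)<\epsilon$, and one is done in two lines. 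Your route---applying first-order optimality at $\mu_n\in\fb(P_{t_0}^{y_n})$, bounding $\nabla_{\sigma_n}F_P(\mu_n)$ via the integral representation, and rearranging through the triangle inequality---does yield the stronger intermediate conclusion $d(\mu_n,\mu)\to\infty$, but this extra strength is not needed and comes at the price of several technical checks (existence and selection of $\mu_n$, well-definedness of $\dir_{\mu_n}(y_n)$ for $\kappa>0$, availability of the integral formula of Theorem~\ref{theorem:pull} at a point that may fall outside the support assumptions when $\kappa>0$).

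Your final clause about ``any closed $S$ in whose bounded part around $\mu$ the Fr\'echet means should remain'' does not close the general-$S$ case: nothing in the statement forces the perturbed means to stay in a bounded portion of $S$, so for unbounded $S$ (say $S=\gamma([0,\infty))$) your estimate $d(\mu_n,\mu)\to\infty$ yields no contradiction. This is not a defect relative to the paper, however: the paper's proof likewise only shows $\mu\notin\fb(P_t^y)$ and hence only literally excludes stickiness at $S=\{\mu\}$. Both arguments are complete for the singleton case that is the paper's operative scenario.
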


\begin{proof}

	Let $y\in M$, $0\leq t\leq 1$ and $\prb\in \wst{M}$ such that $\mean \in \fb(\prb)$.
	In both cases, let
	$\sigma$ be the direction of $\gamma$ at $\mean$. Then,
	\begin{align*}
		\nabla_\sigma F_{\prb_t^y} (\mean) & = (1-t) \cdot\nabla_\sigma F_\prb(\mean) - t \cdot d(\mu,y)
	\end{align*}
	which is negative, if $d(\mu,y) > \frac{1-t}{t}\,\nabla_\sigma F_\prb(\mean)$, i.e. in that case $\mu \not\in \fb(\prb_t^y)$, due to Theorem \ref{theorem:meanderivsII}.
	Due to Lemma \ref{lem:pertb-in-div-neighborhood}, however, for every $\epsilon > 0$ and $t>0$ there is $y \in \gamma([0,\infty))$ such that $D_f(\prb \Vert \prb^{y}_t) < \epsilon$. Thus, $\prb$ is not $(\wst{M},D_f)$-sticky.
\end{proof}

\begin{remark}

	Theorem \ref{theorem:fdiver_unbounded} covers many classical Hadamard spaces such as BHV-spaces and $K$-spiders.\\
	It, however, does not cover all unbounded Hadamard spaces.
	Consider the sets $\{ [0,n] \ : \ n \in NN\}$ and glue them together at $0$, i.e. a "$\NN$-spider" with truncated legs.
	This space is clearly unbounded but neither locally compact nor is there a minimizing geodesic defined on $[0,\infty)$.
\end{remark}

\section{Modulation Stickiness and Its Asymptotics}
\label{scn:mod-sticky-asymp}

In this section, we will show some asymptotic results involving the directional derivatives using empirical process theory. Let us introduce some basic notions from
\cite{vdVaart}.

\begin{definition}

	The covering number $N(\epsilon, M, d)$ of a metric space $(M, d)$ is given by the minimal number of balls with radius $\epsilon > 0$ with
	respect to the metric $d$ such that their union covers the whole space.

\end{definition}

In the following, we will often require an assumption on the covering numbers of the space of directions, essentially requiring it to be finite dimensional of some dimension $k >0$.

\begin{setting}
	\label{set:4}

	In addition to Scenario \ref{set:3}, assume that there
	are $k > 0$ and $V > 0$ such that $N(\epsilon,
		\Sigma_y, \angle_y) \leq \frac{V} {\epsilon^k}$ for every $y \in \Bkapn$.
\end{setting}

%

\subsection{Modulation Stickiness}\label{scn:mod-sticky}

A useful tool in studying Fr\'echet means of distributions exhibiting non-standard limiting behavior is 
the \emph{variance modulation} from \cite{pennec2019curvature}, given for $\prb \in \wst[2]{M}$ by
\begin{align*}
	\mathfrak{m}_n^2(\prb) = \frac{n \cdot \EE(d^2(\mean, \smean))}{\EE(d^2(\mean, X_1))},
\end{align*}
for a sample 
$X_1, \ldots, X_n \iid \prb$.
With the population and sample means $\mu$ and $\hat \mu_n$, respectively, which are unique, already in Scenario \ref{set:2},  consider for $q \geq 1$ the more general  \emph{$q$-th moments modulation}:
\begin{align*}
	\mathfrak{m}_n^q(\prb) = \frac{n^{\frac{q}{2}} \cdot \EE(d^q(\mean, \smean))}{\EE(d^q(\mean, X_1))},
\end{align*}

We define a new flavor of stickiness through the limiting behavior of the $q$-th moments modulation under the pushforward of the log map at the population Fr\'echet mean.

\begin{definition}[Modulation Stickiness, Scenario \ref{set:3}]
	\label{def:modsticky}

	The distribution $\prb
		\in \Qkap[q]$ is called \emph{$q$-modulation sticky} at $\mean$, for $q \geq 1$, if for the
	pushforward measure $\widetilde{\prb} = \log_\mean \# \prb$ on $T_\mean$
	\begin{align*}
		\lim_{n \to \infty} \mathfrak{m}_n^q(\widetilde{\prb}) = 0.
	\end{align*}
\end{definition}

We have defined modulation stickiness on the tangent cone because it is there that the fundamental properties are rooted. For this reason we begin with the following theorem. Its elaborate proof is deferred to Section \ref{scn:mod-sticky-asymp}, starting on Page \pageref{proof:theorem:modunbounded}.  For $a, b \in \RR$, write $a \lor b= \max\{a,b\}$.

\begin{theorem} 
	\label{theorem:modunbounded}

	Let $(M,d)$ be a \catzero~space and a Euclidean cone over its space of directions $\Sigma_\cO$ at its cone point $\cO\in M$. Furthermore, assume that $\Sigma_\cO$ is compact as well as that there
	are $k > 0$ and $V > 0$ such that $N(\epsilon, \Sigma_\cO, \angle_\cO) \leq \frac{V} {\epsilon^k}$,
	Then, for $q \geq 1$, the following statements hold true.
	\begin{enumerate}
		\item If $\prb \in \wst[2\lor (q+\epsilon)]{M}$, $\epsilon >0$, is directionally sticky at
		      $\mean = \mathcal{O}$, we have
		      \begin{align*}
			      n^\frac{q}{2} \EE \left( d^q(\mean, \smean) \right) \to 0.
		      \end{align*}
		\item  If $\prb \in \wst[q \lor 2]{M}$ fulfills the pull condition \eqref{pullcond} and
		      \begin{align*}
			      \lim_{n \to \infty} n^\frac{q}{2} \cdot \EE \left( d^q(\mean, \smean)\right) = 0,
		      \end{align*}
		      then $\prb$ is directionally sticky at $\mean = \mathcal{O}$.
	\end{enumerate}

\end{theorem}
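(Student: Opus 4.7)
The backbone of both directions is an exact formula for $\smean$ available because $M$ is a Euclidean cone. Writing $\bar\phi_\sigma := \frac{1}{n}\sum_{i=1}^n \phi_{\cO,\sigma}(X_i) = -\nabla_\sigma F_{\prb_n}(\cO)$, Definition \ref{Def:Cones} gives $d^2(\gamma_\sigma(t),z) = t^2 + d^2(\cO,z) - 2t\,\phi_{\cO,\sigma}(z)$, so along every ray the empirical Fr\'echet function is the explicit parabola
\begin{align*}
F_{\prb_n}(\gamma_\sigma(t)) = F_{\prb_n}(\cO) - t\,\bar\phi_\sigma + \tfrac{t^2}{2},
\end{align*}
whose minimum over $t\geq 0$ equals $F_{\prb_n}(\cO) - \tfrac12(\bar\phi_\sigma)_+^2$, attained at $t=(\bar\phi_\sigma)_+$. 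Since $\sigma\mapsto\bar\phi_\sigma$ is continuous (Corollary \ref{corollary:contfrech}) on compact $\Sigma_\cO$, a further sup over $\sigma$ locates the unique Fr\'echet mean and yields the identity
\begin{align*}
d(\cO,\smean) \;=\; \bigl(\textstyle\sup_{\sigma\in\Sigma_\cO}\bar\phi_\sigma\bigr)_+,
\end{align*}
which is the lever that powers both statements.

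For (1), directional stickiness together with compactness and continuity give $c := \min_\sigma\nabla_\sigma F_\prb(\cO) > 0$. Setting $Z_n := \sup_\sigma |\bar\phi_\sigma - \mathbb{E}\phi_{\cO,\sigma}(X)|$, the cone identity implies $d(\cO,\smean) \leq (Z_n - c)_+$, and the crude bound $\mathbf{1}_{\{Z_n \geq c\}} \leq (Z_n / c)^\epsilon$ collapses the $q$-th moment into an $L^{q+\epsilon}$-bound
\begin{align*}
\mathbb{E}\,d(\cO,\smean)^q \;\leq\; c^{-\epsilon}\,\mathbb{E}\,Z_n^{q+\epsilon}.
\end{align*}
The class $\mathcal{F} = \{\phi_{\cO,\sigma}\}_{\sigma\in\Sigma_\cO}$ admits envelope $F(x) = d(\cO,x)\in L^{q+\epsilon}(\prb)$ and, from $|\phi_{\cO,\sigma}(x)-\phi_{\cO,\tau}(x)| \leq d(\cO,x)\,\angle_\cO(\sigma,\tau)$, the polynomial covering bound $N(\epsilon,\Sigma_\cO,\angle_\cO)\leq V/\epsilon^k$ lifts to a polynomial uniform-entropy bound on $\mathcal{F}$. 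Standard empirical-process maximal inequalities (symmetrisation plus chaining, completed by a Hoffmann--Jorgensen step to promote the first moment to the $(q+\epsilon)$-th) then deliver $\mathbb{E}\,Z_n^{q+\epsilon} = O(n^{-(q+\epsilon)/2})$, whence $n^{q/2}\,\mathbb{E}\,d(\cO,\smean)^q = O(n^{-\epsilon/2})\to 0$.

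For (2), I would argue by contradiction. If $\prb$ fails to be directionally sticky then, since $\nabla_\sigma F_\prb(\cO)\geq 0$ for all $\sigma$ by Theorem \ref{theorem:meanderivsI}, there exists $\sigma_0$ with $\mathbb{E}\phi_{\cO,\sigma_0}(X) = 0$, and the pull condition \eqref{pullcond} forces $\mathrm{Var}\,\phi_{\cO,\sigma_0}(X) > 0$. The cone identity gives the one-direction lower bound $d(\cO,\smean) \geq (\bar\phi_{\sigma_0})_+$, and the classical CLT applied to the i.i.d.\ summands $\phi_{\cO,\sigma_0}(X_i)$ (which lie in $L^2$ because $\prb\in\wst[q\lor 2]{M}$ and $|\phi_{\cO,\sigma_0}| \leq d(\cO,\cdot)$), combined with Fatou's lemma for the continuous non-negative map $x\mapsto x_+^q$, produces
\begin{align*}
\liminf_{n\to\infty}\; n^{q/2}\,\mathbb{E}\,d(\cO,\smean)^q \;\geq\; \mathbb{E}\,Z_+^q \;>\;0, \qquad Z\sim N\bigl(0,\mathrm{Var}\,\phi_{\cO,\sigma_0}\bigr),
\end{align*}
contradicting the hypothesis.

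The technically most delicate piece is the $L^{q+\epsilon}$-moment bound on $Z_n$ with the \emph{unbounded} envelope $d(\cO,\cdot)$ and only polynomial covering numbers: the Hoffmann--Jorgensen step has to be executed so that the uniform-entropy integral dominates the envelope tails, and it is precisely here that the extra $\epsilon$-moment hypothesis in (1) is consumed. The cone identity, the compactness/continuity extraction of $c>0$, and the CLT/Fatou step in (2) are comparatively routine.
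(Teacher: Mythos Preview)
Your proof is correct and follows essentially the same route as the paper: the cone identity $d(\cO,\smean)=(\sup_\sigma\bar\phi_\sigma)_+$ (stated there as Lemma~\ref{lemma:conemean}), then for (1) the van der Vaart--Wellner maximal inequalities to bound $\mathbb{E}\lVert\mathbb{G}_n\rVert_{\Phi_{\Sigma_\mu}}^{q+\epsilon}$ uniformly in $n$, and for (2) a CLT contradiction at a direction with $\nabla_{\sigma_0}F_\prb(\cO)=0$. The only cosmetic differences are that the paper uses H\"older's inequality with exponents $(q+\epsilon)/q$ and $(q+\epsilon)/\epsilon$ in place of your pointwise bound $\mathbf{1}_{\{Z_n\geq c\}}\leq (Z_n/c)^\epsilon$ (the two yield the identical estimate $n^{q/2}\mathbb{E}\,d^q\leq c^{-\epsilon}n^{-\epsilon/2}\mathbb{E}\lVert\mathbb{G}_n\rVert^{q+\epsilon}$), and in (2) argues via uniqueness of weak limits rather than your Fatou/Portmanteau lower bound.
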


Next, we extend Part 1. of Theorem \ref{theorem:modunbounded} to CAT(0)-spaces that are not cones but sufficiently regular. Since $d$ is a convex function, there can at most be countably many points where the derivative of $d^2_z \circ \gamma$ is not defined. However, both left- and right derivatives exist for any $t \in [0, d(\mean,\meanalt)]$, and the left derivative is at most as large as the right derivative \cite[Chapter 4.4]{burago}. We assume below
that every geodesic segment features only finitely many points of discontinuity of the directional derivatives along it.

\begin{theorem}

	\label{thm:modunbounded}

	In Scenario \ref{set:4} let $(M,d)$ be a Hadamard space such that  for every geodesic segment $\gamma: [0,a] \to M$, $a>0$, $\prb \in \wst{M}$ and $z \in M$, the function $F_\prb \circ \gamma$ is twice-differentiable everywhere except for at most finitely points. 
	Then, for $\prb \in \wst[(q + \epsilon) \lor 2]{M}$, $q \geq 1$ and $\epsilon >0$, with Fr\'echet mean $\mean$, that is  directionally sticky, we have
	\begin{align*}
		n^\frac{q}{2} \EE \left( d^q(\mean, \smean) \right) \to 0.
	\end{align*}
\end{theorem}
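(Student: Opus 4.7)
The plan is to mirror the cone-case argument of Theorem \ref{theorem:modunbounded}, Part 1, exploiting that the NPC inequality available in Scenario \ref{set:4} supplies the same $1$-strong convexity of $F_\prb \circ \gamma$ along unit-speed geodesics emanating from $\mean$ that one automatically has in a Euclidean cone, so the same sample-stickiness-plus-Hölder dichotomy can be executed.

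First, directional stickiness combined with continuity of $\sigma \mapsto \nabla_\sigma F_\prb(\mean)$ (Corollary \ref{corollary:contfrech}) and compactness of $\Sigma_\mean$ (Proposition \ref{prop:cat_dir}) yields $c := \min_{\sigma \in \Sigma_\mean} \nabla_\sigma F_\prb(\mean) > 0$. For any unit-speed geodesic $\gamma$ starting at $\mean$ in direction $\sigma$, integrating the NPC inequality (Proposition \ref{prop:catgeod}(iv) with $\kappa = 0$, $C_{0,\epsilon} = 1$) against $\prb$ and against $\prb_n$ shows that both $F_\prb \circ \gamma$ and $F_{\prb_n} \circ \gamma$ are $1$-strongly convex, so using the right-derivative at $0$ as a subgradient gives
\[ F_\prb(\gamma(s)) - F_\prb(\mean) \geq s\,\nabla_\sigma F_\prb(\mean) + s^2/2, \qquad F_{\prb_n}(\gamma(s)) - F_{\prb_n}(\mean) \geq s\,\nabla_\sigma F_{\prb_n}(\mean) + s^2/2. \]
When $\smean \neq \mean$, applying the second bound along the geodesic from $\mean$ to $\smean$ of length $\hat r_n = d(\mean, \smean)$ in direction $\hat\sigma_n$, and using the minimising property $F_{\prb_n}(\smean) \leq F_{\prb_n}(\mean)$, yields $\nabla_{\hat\sigma_n} F_{\prb_n}(\mean) \leq -\hat r_n/2$. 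Combined with $\nabla_{\hat\sigma_n} F_\prb(\mean) \geq c$ this produces the deterministic bound
\[ \hat r_n \;\leq\; 2\,(\Delta_n - c)_+, \qquad \Delta_n := \sup_{\sigma \in \Sigma_\mean} \bigl\lvert \nabla_\sigma F_{\prb_n}(\mean) - \nabla_\sigma F_\prb(\mean) \bigr\rvert, \]
which is the exact analogue of the cone-case dichotomy driving Theorem \ref{theorem:modunbounded}.

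The remaining work is an empirical process estimate of the form $\EE \Delta_n^{q+\epsilon} = O(n^{-(q+\epsilon)/2})$. By Theorem \ref{theorem:pull} the centred empirical functional equals $\tfrac{1}{n}\sum_{i=1}^n (\phi_{\mean,\sigma}(X_i) - \EE \phi_{\mean,\sigma})$; the class $\{\phi_{\mean,\sigma}\}_{\sigma \in \Sigma_\mean}$ has envelope $z \mapsto d(\mean, z) \in L^{(q+\epsilon)\lor 2}(\prb)$ and is Lipschitz in $\sigma$ with constant $d(\mean,z)$ with respect to $\angle_\mean$ (Lemma \ref{lemma:philip}, $\kappa=0$). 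Using the polynomial covering number hypothesis of Scenario \ref{set:4}, discretising over an $n^{-1/2}$-net of $\Sigma_\mean$, applying a Rosenthal bound at each net point, and controlling the residual gap by the Lipschitz estimate, produces the claimed $L^{q+\epsilon}$-bound on $\Delta_n$.

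Finally, Markov's inequality gives $\Pr(\Delta_n > c) \leq c^{-(q+\epsilon)} \EE \Delta_n^{q+\epsilon} = O(n^{-(q+\epsilon)/2})$, and Hölder's inequality with conjugate exponents $(q+\epsilon)/q$ and $(q+\epsilon)/\epsilon$ applied to $\hat r_n^q \leq 2^q \Delta_n^q \mathbf{1}_{\{\Delta_n > c\}}$ yields
\[ \EE d^q(\mean, \smean) \;\leq\; 2^q\,\bigl(\EE \Delta_n^{q+\epsilon}\bigr)^{q/(q+\epsilon)}\, \Pr(\Delta_n > c)^{\epsilon/(q+\epsilon)} \;=\; O(n^{-(q+\epsilon)/2}), \]
so that $n^{q/2}\,\EE d^q(\mean, \smean) = O(n^{-\epsilon/2}) \to 0$. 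The main obstacle is the uniform empirical process bound on $\Delta_n$: pushing a Rosenthal-type estimate through an unbounded envelope $d(\mean, \cdot)$ while retaining uniformity over $\Sigma_\mean$ needs a careful truncation-chaining argument, and the extra regularity assumption that $F_\prb \circ \gamma$ is twice-differentiable off a finite set is expected to enter exactly at this junction, ensuring the clean subgradient bound $h(s) - h(0) \geq s h'(0) + s^2/2$ used in Step 2 holds with the specific right-derivative $\nabla_\sigma F_\prb(\mean)$ at $s = 0$, rather than only at a.e. parameter value.
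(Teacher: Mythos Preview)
Your argument is correct, but it follows a genuinely different route from the paper. The paper does not redo the empirical-process Hölder argument in $M$; instead it transports the problem to the tangent cone. It invokes a comparison lemma (Lemma~\ref{lemma:tanmeandist}) asserting $d(\mean,\smean) \leq \widetilde d\bigl(\cO,\fb(\log_\mean \# \prb_n)\bigr)$, observes that $(\log_\mean X_i)_{i\in\NN}$ is i.i.d.\ with law $\log_\mean \# \prb$ on $T_\mean$, and then simply applies Theorem~\ref{theorem:modunbounded} (the cone case) to the pushforward. This makes the proof essentially two lines, at the cost of proving Lemma~\ref{lemma:tanmeandist}, whose proof integrates $\ddot F_\prb \geq 1$ piecewise along the geodesic from $\mean$ to $\smean$ and telescopes --- this is precisely where the ``twice-differentiable off finitely many points'' hypothesis is used in the paper.

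Your approach instead works directly in $M$: the NPC inequality already gives $1$-strong convexity of $F_{\prb_n}\circ\gamma$ in the sense of midpoint inequalities, and for a $1$-strongly convex function $h$ on an interval the bound $h(s)-h(0)\geq s\,h'_+(0)+s^2/2$ follows from elementary convex analysis (apply the subgradient inequality to $s\mapsto h(s)-s^2/2$), \emph{without} any twice-differentiability. So your hedging in the last paragraph is misplaced: your argument does not need that hypothesis at all, which is an advantage over the paper's route. The trade-off is that you must re-run the full maximal-inequality/Hölder machinery from the proof of Theorem~\ref{theorem:modunbounded} rather than quote it; your Rosenthal-plus-net sketch for $\EE\Delta_n^{q+\epsilon}=O(n^{-(q+\epsilon)/2})$ is a bit rough, and it would be cleaner simply to cite the same maximal inequalities \cite[Theorems~2.14.2 and 2.14.5]{vdVaart} that the paper uses, since the Lipschitz-in-$\sigma$ structure and the envelope $d(\mean,\cdot)\in L^{(q+\epsilon)\lor 2}$ are identical.
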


\begin{proof}

	By Lemma \ref{lemma:tanmeandist} further below, we have $d(\mean, \smean) \leq \widetilde{d}
		(\mathcal{O}, \mathfrak{b}(\log_\mean \# \prb_n))$. Furthermore, the
	sequence $(\log_\mean(X_i))_{i\in \NN}$ is an i.i.d. sequence of random
	variables following the distribution $\log_\mean \# \prb$. Therefore, we
	can apply Theorem \ref{theorem:modunbounded}. Thus,
	\begin{align*}
		n^\frac{q}{2} \EE \left( d^q(\mean, \smean) \right) \leq  n^\frac{q}{2} \EE \left( d^q(\cO, \fb((\log_\mean \# \prb_n)) \right)\to 0.
	\end{align*}
\end{proof}

By definition of modulation stickiness, Theorem \ref{theorem:modunbounded} translates at once to the general Scenario \ref{set:4} as follows.

\begin{theorem} 
	\label{theorem:modstickiness}

	In Scenario \ref{set:4} the following two statements hold for $\prb \in \wst[2 \lor q]{M}$,
	$q \geq 1$.
	\begin{enumerate}
		\item If $q>1$ and if $\prb$ is directionally sticky 
		      then it is $p$-modulation sticky
		      for all $1\leq  p < q$.
		\item If $\prb$ is $q$-modulation sticky and fulfills \eqref{pullcond}, then
		      $\prb$ is also directionally sticky.
	\end{enumerate}
\end{theorem}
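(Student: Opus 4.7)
The plan is to reduce both parts to Theorem \ref{theorem:modunbounded} applied to the pushforward measure $\widetilde{\prb} = \log_\mean \# \prb$ on the tangent cone $T_\mean$, since Definition \ref{def:modsticky} of modulation stickiness is set up precisely so that it takes place on this tangent cone. Hence, the bulk of the work is to check that Theorem \ref{theorem:modunbounded} is applicable to $\widetilde{\prb}$ on $T_\mean$ under our Scenario \ref{set:4}, and then to translate its two parts back through $\log_\mean$.

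Toward this, I first verify that the standing assumptions of Theorem \ref{theorem:modunbounded} carry over to $T_\mean$. By Proposition \ref{prop:cat_dir}, in Scenario \ref{set:3} the tangent cone $T_\mean$ is a locally compact, geodesically complete Hadamard space, and by Definition \ref{def:spaceofdir} it is the Euclidean cone over $\Sigma_\mean$. Since $\Sigma_\mean$ is compact and complete by the same proposition, the space of directions at $\cO \in T_\mean$ is canonically isometric to $(\Sigma_\mean,\angle_\mean)$, so the covering-number bound of Scenario \ref{set:4} supplies exactly the bound Theorem \ref{theorem:modunbounded} asks for. Because $\widetilde{d}(\cO,\log_\mean z) = d(\mean,z)$, moments carry over: $\widetilde{\prb} \in \wst[r]{T_\mean}$ if and only if $\prb \in \wst[r]{M}$. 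Finally, by Theorem \ref{thm:directional-sticky-tangent-cone} the Fr\'echet mean of $\widetilde{\prb}$ is $\cO$, and $\widetilde{\prb}$ is directionally sticky at $\cO$ if and only if $\prb$ is directionally sticky at $\mean$.

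For Part 1, given directional stickiness of $\prb$ and any $1 \leq p < q$, I pick $\epsilon>0$ small enough that $p+\epsilon \leq q$, so that $\widetilde{\prb} \in \wst[2 \lor (p+\epsilon)]{T_\mean}$. Theorem \ref{theorem:modunbounded} Part 1 applied with $p$ in place of its $q$ then yields $n^{p/2}\,\EE(\widetilde{d}^p(\cO,\hat{\cO}_n)) \to 0$; dividing by the finite positive constant $\EE(\widetilde{d}^p(\cO,\log_\mean X_1))$ (positive because a Dirac mass at $\mean$ has vanishing directional derivatives and so cannot be directionally sticky) delivers $p$-modulation stickiness. For Part 2, $q$-modulation stickiness of $\prb$ is by definition that of $\widetilde{\prb}$, and the pull condition \eqref{pullcond} transfers from $\prb$ to $\widetilde{\prb}$ via the identity $\phi_{\mean,\sigma}(z) = \widetilde{\phi}_{\cO,\sigma}(\log_\mean z)$ from Remark \ref{rmk:pull-tangent-pull}. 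Theorem \ref{theorem:modunbounded} Part 2 then produces directional stickiness of $\widetilde{\prb}$ at $\cO$, which lifts back to $\prb$ at $\mean$ by Theorem \ref{thm:directional-sticky-tangent-cone}.

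The only genuinely delicate point is the $+\epsilon$ moment gap in Theorem \ref{theorem:modunbounded} Part 1: it is the reason we need the strict inequality $p<q$ rather than $p\leq q$ in the statement, and why the reduction is slightly asymmetric between the two parts. Apart from this, the proof is essentially bookkeeping, verifying that the Scenario \ref{set:4} structure on $M$ passes through $\log_\mean$ to supply the cone-setting hypotheses of Theorem \ref{theorem:modunbounded}; the hard analytic work lies entirely in that theorem, which may be invoked as a black box.
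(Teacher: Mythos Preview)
Your proposal is correct and follows exactly the paper's approach: the paper's own proof is the single sentence ``By definition of modulation stickiness, Theorem \ref{theorem:modunbounded} translates at once to the general Scenario \ref{set:4} as follows,'' and you have simply spelled out the bookkeeping behind that sentence, checking that the tangent-cone hypotheses of Theorem \ref{theorem:modunbounded} are met in Scenario \ref{set:4} and that directional stickiness, moments, and the pull condition transfer through $\log_\mean$. Your handling of the $\epsilon$-gap via $p+\epsilon \leq q$ is precisely the reason the statement requires $p<q$ rather than $p\leq q$.
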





If we assume that our distribution $\prb$ is supported by a bounded subset $A \subset M$, we have, in case of stickiness, faster convergence rates.

\begin{theorem}[Distributions with bounded support
	]
	\label{theorem:modbounded}

	Suppose that in Scenario \ref{set:4} -- recall that $k$ gives the covering rate of the space of directions --  a distribution $\prb \in \Qkapn$ is directionally sticky at $\mean$ with
	$\supp(\prb) \subset A$ for a closed convex $A \subset M$ with $\diam(A) < \infty$. Then
	for every sample, $X_1, \ldots, X_n \iid \prb$, $n\in \NN$, there are constants $c_{min} =\min_{\sigma \in 			\Sigma_\mean} \nabla_\sigma F_\prb (\mean) > 0$ and $C >0$ that
	\begin{align*}
		\mathbb{P} \left( \mathfrak{b}(\prb_n) \neq \mean \right)  \leq C \left(\sqrt{n}c_{min}\right)^k \exp\left(- 2 n c_{min}^2 \right)
	\end{align*}
	where $C$ solely depends on $k$ and $V$. In particular, we have for every $q \geq 1$ that
	\begin{align*}
		\EE \left( d^q (\mean, \smean) \right) \leq C \cdot \diam(A)^q \cdot \left(\sqrt{n}c_{min}\right)^k \exp\left(- 2 n c_{min}^2 \right).
	\end{align*}
\end{theorem}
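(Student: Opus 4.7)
The plan is to reduce the event $\{\mathfrak{b}(\prb_n) \neq \mean\}$ to the event that some directional derivative of the empirical Fr\'echet function becomes negative, and then control the supremum over $\sigma \in \Sigma_\mean$ by a covering-plus-concentration argument. By Proposition \ref{prop:uniquefrech}, the sample Fr\'echet mean in $\Bkap$ is unique, and by Theorem \ref{theorem:meanderivs} it equals $\mean$ if and only if $\nabla_\sigma F_{\prb_n}(\mean) \geq 0$ for every $\sigma \in \Sigma_\mean$. Using Theorem \ref{theorem:pull} and Remark \ref{remark:derivative-Frechet-fcn-with-pull}, the empirical derivative is a centered sum
\begin{align*}
	\nabla_\sigma F_{\prb_n}(\mean) = -\frac{1}{n}\sum_{i=1}^n \phi_{\mean,\sigma}(X_i)
\end{align*}
of i.i.d. bounded variables with $\phi_{\mean,\sigma}(X_i) \in [-\diam(A), \diam(A)]$ and expectation $-\nabla_\sigma F_\prb(\mean) \leq -c_{min}$, where $c_{min} > 0$ exists thanks to directional stickiness combined with the compactness of $\Sigma_\mean$ from Proposition \ref{prop:cat_dir} and the continuity of $\sigma \mapsto \nabla_\sigma F_\prb(\mean)$ provided by Corollary \ref{corollary:contfrech}.

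Next, I would discretize $\Sigma_\mean$ using Scenario \ref{set:4}: pick an $\epsilon$-net $\sigma_1,\ldots,\sigma_N \in \Sigma_\mean$ of cardinality $N \leq V\epsilon^{-k}$. Corollary \ref{corollary:contfrech} applied to the empirical measure supplies the uniform Lipschitz estimate $|\nabla_\sigma F_{\prb_n}(\mean) - \nabla_{\sigma_i}F_{\prb_n}(\mean)| \leq \diam(A)\,\angle_\mean(\sigma,\sigma_i)$, so for any $\sigma \in \Sigma_\mean$ there is a net point $\sigma_i$ with $\nabla_\sigma F_{\prb_n}(\mean) \geq \nabla_{\sigma_i}F_{\prb_n}(\mean) - \diam(A)\,\epsilon$. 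Consequently,
\begin{align*}
	\{\mathfrak{b}(\prb_n) \neq \mean\} \subseteq \bigcup_{i=1}^N \bigl\{ \nabla_{\sigma_i} F_{\prb_n}(\mean) < \diam(A)\,\epsilon \bigr\}.
\end{align*}

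At each net point, Hoeffding's inequality for bounded i.i.d. summands yields a one-sided tail estimate of the order $\exp\bigl(-c\,n(c_{min} - \diam(A)\epsilon)^2/\diam(A)^2\bigr)$. A union bound over the $N$ net points gives
\begin{align*}
	\mathbb{P}(\mathfrak{b}(\prb_n) \neq \mean) \leq V\,\epsilon^{-k}\exp\bigl(-c\,n(c_{min} - \diam(A)\epsilon)^2/\diam(A)^2\bigr).
\end{align*}
Optimizing $\epsilon$ on the scale $1/(\sqrt{n}\, c_{min})$, so that the Lipschitz slack is asymptotically small compared to $c_{min}$ while the net count contributes exactly the factor $(\sqrt{n}\,c_{min})^k$, produces the stated probability bound after absorbing $V$, $\diam(A)$, and the Hoeffding constant into $C$. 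The moment estimate then follows from the deterministic bound $d(\mean, \smean) \leq \diam(A)$, valid because the sample Fr\'echet mean of a distribution supported in the closed convex set $A$ remains in $A$ (a consequence of the convexity of $F_{\prb_n}$ along geodesics, cf. Proposition \ref{prop:catgeod}(iv)), so that $\EE[d^q(\mean, \smean)] \leq \diam(A)^q\,\mathbb{P}(\smean \neq \mean)$.

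The main obstacle is balancing the three quantities, namely the net resolution, the Lipschitz slack, and the Hoeffding tail, so that the prefactor comes out as $(\sqrt{n}\, c_{min})^k$ rather than as an unrelated power of $\epsilon^{-1}$, and so that the exponent matches $\exp(-2nc_{min}^2)$ after absorption of $\diam(A)$-dependent factors into $C$. The conceptual steps themselves (reduction to signs of directional derivatives, net, Hoeffding, Lipschitz) are standard, but the precise dependence on $c_{min}$ in both the prefactor and the exponent forces a careful, simultaneous choice of these parameters.
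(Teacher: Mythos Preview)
Your proposal is essentially correct and follows the same logical skeleton as the paper's proof: reduce $\{\mathfrak{b}(\prb_n)\neq\mean\}$ to the event that some empirical directional derivative is negative (via Theorem~\ref{theorem:meanderivs}), control the supremum over $\Sigma_\mean$, and then bound the $q$-th moment deterministically by $\diam(A)^q$ times the probability (using that both $\mean$ and $\smean$ lie in the closed convex set $A$).

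The difference is in how the supremum is controlled. The paper does not build the $\epsilon$-net and apply Hoeffding by hand; instead it invokes the ready-made finite-sample tail bound \cite[Theorem~2.14.9]{vdVaart} for an empirical process indexed by a bounded class with polynomial covering numbers, applied directly to Inequality~(\ref{ineq:empmeanprob}). Your net-plus-Hoeffding-plus-union-bound argument is precisely the elementary mechanism underlying such results, so the two proofs are the same in spirit. What the paper's approach buys is brevity and the exact prefactor/exponent form stated; what your approach buys is transparency and independence from the empirical-process black box. Note, however, that your Hoeffding exponent is of order $n c_{min}^2/\diam(A)^2$ rather than $2nc_{min}^2$, and the Lipschitz slack at scale $\epsilon\sim 1/(\sqrt{n}\,c_{min})$ introduces a correction of order $\sqrt{n}\,c_{min}$ in the exponent; so your version naturally produces a bound of the shape $C'\,(\sqrt{n}\,c_{min})^k\exp(-c\,n c_{min}^2/\diam(A)^2)$ with $C'$ depending on $\diam(A)$, which is qualitatively identical but does not literally match the constants as stated. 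The paper's citation of \cite[Theorem~2.14.9]{vdVaart} sidesteps this bookkeeping.
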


\begin{proof}

	Since we have that $\lvert \phi_{x,\sigma}(z) \rvert \leq \diam(A)$ for $z$ $\prb$-almost surely, and $N(\epsilon, \Sigma_\mean, \angle_\mean)
		\lesssim \frac{V}{\epsilon^q}$, applying \cite[Theorem 2.14.9]{vdVaart} to
	Inequality (\ref{ineq:empmeanprob}) yields
	\begin{align*}
		\mathbb{P} \left( \smean \neq \mean \right) & \leq \mathbb{P} \left( \sup_{\sigma \in \Sigma_\mean}\lvert \nabla_\sigma
		F_\prb(\mean) - \nabla_\sigma F_{\prb_n}(\mean)\rvert > c_{min}\right)                                                        \\
		                                            & \leq (\sqrt{n}\cdot c_{min})^k \cdot C \cdot \exp\left(- 2 n c_{min}^2 \right),
	\end{align*}
	where $C$ depends only on $V$ and $k$, yielding the first claim.

	As the $\supp(\prb_n) \subset A$, almost surely, both the population and sample Fr\'echet mean reside in $A$, the latter a.s., see \cite[Proposition 6.1]{Sturm} and \cite[Corollary 16]{yokota}. Thus,
	\begin{align*}
		\mathbb{E} \left(d^q(\mean, \smean)\right) & = \mathbb{E} \left( \mathbbm{1}_{\{\smean \neq \mean\}} \cdot d^q(\mean, \smean)\right) \\
		                                           & \leq \mathbb{P} \left( \smean \neq \mean \right) \cdot \diam(A)^q\,,
	\end{align*}
	and the second claim follows.
\end{proof}

Before embarking on proving the technical details of Theorem \ref{theorem:modunbounded} we first introduce some basic empirical process jargon.
\subsection{Some Empirical Process Terminology}

The rest of the notation introduced here will be
standard notation of empirical process theory. It will only be used in the subsequent proofs.

To derive asymptotic results, we will need to \emph{uniformly} control the convergence of the directional derivatives of the Fr\'echet function for samples. We know that $\nabla_\sigma F_\prb (\mean) =
	\int_M \phi_{\mean,\sigma}(z) \ \diff\prb(z)$ for any $\prb \in \wst{M}$.
Following standard notation in empirical process theory, we write for an integrable function $\psi: M \to \mathbb{R}$ and some 
measure $\prbalt$ on $M$,
\begin{align*}
	\prbalt \psi = \int_M \psi(z) \ \diff \prbalt(z).
\end{align*}
In this notation, we have $\nabla_\sigma F_\prb (\mean) = -\prb \phi_{\mean, \sigma}$.
Therefore, we can interpret the negative directional derivatives as integrals over the
set of pulls, indexed by the space of directions, i.e. the function class
\begin{align*}
	\Phi_{\Sigma_\mean} = \{ \phi_{\mean,\sigma} : M \to \mathbb{R} \ \vert \
	\sigma \in \Sigma_\mean\}
\end{align*}
for $\mean \in M$ in a \cat-space $(M,d)$. 

The \emph{centered and scaled empirical process} $\mathbb{G}_n$ is given by
\begin{align}\label{eq:centered-scaled-emp-process}
	\mathbb{G}_n = \sqrt{n} (\prb_n - \prb)\,,
\end{align}
and its supremum
\begin{align}\label{eq:sup-emp-process}
	\lVert \mathbb{G}_n \rVert_{\Phi_{\Sigma_\mean}} := \sup_{\psi \in \Phi_{\Sigma_\mean}} \lvert \mathbb{G}_n \psi \rvert\,.
\end{align}

We are interested in the \emph{$\Phi_{\Sigma_\mean}$-indexed empirical process}, given by
\begin{align*}
	\{ \mathbb{G}_n  \phi : \phi \in \Phi_{\Sigma_\mean} \}.
\end{align*}
Typically, in empirical process theory, one to verifies for a distribution $\prb \in \Qkap$ and  every $y \in M$ that
$$\sup_{\psi \in
		\Phi_{\Sigma_\mean}} \lvert \psi(y) - \prb \psi \rvert < \infty\,.$$
This is indeed the case in Scenario \ref{set:3} since
\begin{align*}
	\lvert \phi_{\mean, \sigma}(y) - \prb \phi_{\mean, \sigma} \rvert & = \lvert \phi_{\mean, \sigma}(y) - \nabla_\sigma F_\prb(\mean)\rvert           \\
	                                                                  & = \lvert \nabla_\sigma F_{\delta_y}(\mean) - \nabla_\sigma F_\prb(\mean)\rvert \\
	                                                                  & \leq C \cdot W_1(\delta_y, \prb)                                               \\
	                                                                  & = C \cdot \int_M d(y, z) \ \diff\prb(z) < \infty\,,
\end{align*}
for all $\sigma \in \Sigma_\mean$, where Theorem \ref{theorem:derivcontr_sphere} provides suitable $C>0$, and the assumption $\prb \in \wst{M}$ ensures that the integral at the end is finite.


\subsection{Details on Modulation Convergence
} \label{scn:moment-convergence}

Modulation stickiness hinges on the rate of convergence of the expected distance between sample and population mean. Given a distribution $\prb 	\in \wst[q]{M}$ with unique Fr\'echet mean $\mu$ and a sequence $X_1, X_2, \ldots \iid \prb$ with unique sample mean $\hat\mu_n$, we have
\begin{align*}
	\mathbb{E} \left(d^q(\mean, \smean)\right) & = \mathbb{E} \left( \mathbbm{1}_{\{\smean \neq \mean\}} \cdot d^q(\mean, \smean)\right),
\end{align*}
allowing both sides to be infinite as of now. We shall use this to obtain convergence rates that are faster than $n^{\frac{q}{2}}$ for the term on the left-hand sight by obtaining bounds of $\mathbb{P} \left(
	\mathfrak{b}(\prb_n) \neq \mean \right)$ in the case of sticky distributions.

As we saw in the proof of Theorem \ref{theorem:mainthm}, for a $W_1$-sticky distribution $\prb$ in Scenario \ref{set:3}, we can find a lower bound $c_{min} = \min_{\sigma \in \Sigma_\mean} \nabla_\sigma F_\prb (\mean) > 0$ on the directional derivatives of the Fr\'echet function. By Theorem \ref{theorem:meanderivs}, if $\mathfrak{b}(\prb_n) \neq \mean$, a directional derivative of the empirical measure's Fr\'echet function must be  negative. This then implies that $\sup_{\sigma \in \Sigma_\mean}\lvert \nabla_\sigma 	F_\prb(\mean) - \nabla_\sigma F_{\prb_n}(\mean)\rvert > c_{min}$ for some $c_{min}>0$, giving rise to
\begin{align}
	\label{ineq:empmeanprob}
	\mathbb{P} \left( \mathfrak{b}(\prb_n) \neq \mean \right) \leq \mathbb{P} \left( \sup_{\sigma \in \Sigma_\mean}\lvert \nabla_\sigma
	F_\prb(\mean) - \nabla_\sigma F_{\prb_n}(\mean)\rvert > c_{min}\right).
\end{align}

Using empirical process theory, we obtain bounds in terms of $n$ on the right-hand side by translating the problem into the well-behaved tangent cone, as Corollary \ref{cor:stickytangent} ensures that stickiness of a distribution in the original space is equivalent to stickiness of its pushforward
under the log map to the tangent cone at the population mean. On this Euclidean cone, we will derive a closed expression for the position of Fr\'echet means.

The following result determines the position of a Fr\'echet mean in the tangent cone terms of the directional derivatives of the Fr\'echet function at the cone point.

\begin{lemma} 
	\label{lemma:conemean}

	Let $(M,d)$ be a \catzero~space and a Euclidean cone over its space of directions $\Sigma_\cO$ at its cone point $\cO\in M$. Assuming that $\Sigma_\cO$ is compact, $\prb \in \wst{M}$ and $\sigma
		= \argmin_{\tau \in \Sigma_\mathcal{O}} \nabla_\tau F_\prb
		(\mathcal{O})$, we have
	\begin{align*}
		\mathfrak{b}(\prb) = (\sigma, 0 \lor (- \nabla_\sigma F_\prb (\mathcal{O}))).
	\end{align*}

\end{lemma}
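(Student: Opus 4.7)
The plan is to exploit the cone structure by parametrizing every point $x \in M$ as $(\tau, t)$ with $\tau \in \Sigma_{\mathcal{O}}$ and $t \geq 0$, and then reducing the minimization of the Fr\'echet function to a two-stage optimization. Since $(M,d)$ is a Euclidean cone with base $\Sigma_\cO$, the Alexandrov angle at $\cO$ between the geodesics $t \mapsto (\sigma, t)$ and $t \mapsto (\tau, t)$ equals $d_\pi(\sigma, \tau)$, as can be read directly from Definition \ref{def:alexangle} together with the cone distance formula. Working with the Fr\'echet difference $F_P^{\cO}$ (well-defined for $P \in \wst{M}$ by Remark \ref{rmk:Frechet-diff}), the cone distance gives, for $x = (\tau, t)$ and $z \in M$ written as $z = (\dir_\cO(z), d(\cO,z))$,
\[
d^2((\tau, t), z) - d^2(\cO, z) = t^2 - 2t\,\phi_{\cO, \tau}(z),
\]
with the pull $\phi_{\cO, \tau}$ from Definition \ref{def:pull}. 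Integrating against $P$ and invoking Remark \ref{remark:derivative-Frechet-fcn-with-pull} yields the key identity
\[
F_P^{\cO}\bigl((\tau, t)\bigr) = \tfrac{1}{2} t^2 + t\,\nabla_\tau F_P(\cO).
\]

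Next, I would carry out the two-stage minimization. For fixed $\tau$, the one-variable function $t \mapsto \tfrac{1}{2} t^2 + t\,\nabla_\tau F_P(\cO)$ is a convex quadratic on $[0,\infty)$ whose unconstrained minimizer is $-\nabla_\tau F_P(\cO)$; the constrained minimum is therefore attained at $t^*(\tau) = 0 \lor (-\nabla_\tau F_P(\cO))$, with minimum value $-\tfrac{1}{2}\bigl(0 \lor (-\nabla_\tau F_P(\cO))\bigr)^2$. Minimizing this over $\tau \in \Sigma_{\cO}$ amounts to minimizing $\nabla_\tau F_P(\cO)$ over $\Sigma_{\cO}$, since $r \mapsto (0 \lor (-r))^2$ is non-increasing. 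By Corollary \ref{corollary:contfrech}, $\tau \mapsto \nabla_\tau F_P(\cO)$ is continuous (in fact Lipschitz in the angle metric), and $\Sigma_{\cO}$ is compact by hypothesis, so the argmin $\sigma$ exists. Plugging back yields the claimed formula $\fb(P) = \bigl(\sigma,\, 0 \lor (-\nabla_\sigma F_P(\cO))\bigr)$.

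The main subtlety is handling the two regimes cleanly. When $\nabla_\sigma F_P(\cO) \geq 0$ (i.e.\ all directional derivatives at $\cO$ are nonnegative, the directionally sticky case), the argmin $\sigma$ need not be unique in $\Sigma_\cO$, but every candidate produces the same point $(\sigma, 0) = \cO$ in $M$, so the stated formula is unambiguous; in fact, Theorem \ref{theorem:meanderivsI} already identifies $\cO$ as the mean here. When $\nabla_\sigma F_P(\cO) < 0$, uniqueness of the argmin in $\Sigma_\cO$ is forced by uniqueness of the Fr\'echet mean itself, which holds in the Hadamard setting by Proposition \ref{prop:uniquefrech} (any two distinct argmins would give distinct Fr\'echet means of equal minimal $F_P$-value). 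Once this case distinction is dispatched, the formula follows immediately from the explicit minimization above.
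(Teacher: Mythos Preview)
Your proposal is correct and follows essentially the same approach as the paper: parametrize points of the cone as $(\tau,t)$, use the cone distance formula to obtain the explicit quadratic $F_P^{\cO}((\tau,t)) = \tfrac{1}{2}t^2 + t\,\nabla_\tau F_P(\cO)$, minimize first in $t$ and then in $\tau$ via continuity (Corollary~\ref{corollary:contfrech}) and compactness, and invoke uniqueness of the Fr\'echet mean in CAT(0) (Proposition~\ref{prop:uniquefrech}). Your version is in fact slightly more careful than the paper's in working with the Fr\'echet difference $F_P^{\cO}$ rather than $F_P$ (since the hypothesis is only $P\in\wst{M}$) and in spelling out the case distinction for the two regimes.
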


\begin{proof}

	Let $\tau \in \Sigma_\mathcal{O}$. $\gamma_\tau \in \tau$ and consider arbitrary $z = (\rho,d(\cO,z)) \in M$.   Since $M$ is a Euclidean cone, cf. Definition \ref{Def:Cones}, we have
	\begin{align*}
		d^2(z, \gamma_\tau(t)) = t^2 + d^2(\cO,z) - 2 d(\cO,z)\, t \cos \angle_\cO(\tau, \rho)\,,
	\end{align*}
	for all $t \geq 0$. Consequently, in conjunction with Theorem \ref{theorem:pull},
	\begin{align*}
		F_\prb(\gamma_\tau(t)) = \frac{1}{2}\int d^2(z, \gamma_\tau(t)) \,d\prb(z) = \frac{1}{2} t^2 +  \nabla_\tau F_\prb(\mathcal{O}) \cdot t + F_\prb(\mathcal{O}), \quad t \geq 0\,,
	\end{align*}
	which attains its minimum $F_\prb(\mathcal{O})$ or $-\nabla_\tau F_\prb(\mathcal{O})^2/2$, at $t_\tau =0 \lor (- \nabla_\tau F_\prb (\mathcal{O})))$. Since $\Sigma_\cO$ is compact by hypothesis, $\nabla_\tau F_\prb (\mathcal{O})$ varies continuously in $\tau\in \Sigma_\cO$, due to Corollary \ref{corollary:contfrech},  and the uniqueness of the Fr\'echet mean on \catzero~spaces, due to Proposition
	\ref{prop:uniquefrech}, taking into account that our geodesics have unit speed, the claim follows.

	%

\end{proof}

\paragraph{Proof of Theorem \ref{theorem:modunbounded}}
\label{proof:theorem:modunbounded}
We start by proving Part 1.\\
Since, due to Theorem \ref{theorem:mainthm}, all  directional derivatives of $F_\prb$ at $\mu$ are positive, we have
\begin{align*}
	0 \lor \Big(- \sqrt{n}\underbrace{ \inf_{\tau \in \Sigma_\mean} \nabla_\tau F_{\prb_n} (\mathcal{O})}_{\geq \inf_{\tau \in \Sigma_\mean} \nabla_\tau F_{\prb_n} (\mathcal{O})- \nabla_\tau F_\prb (\mathcal{O})} \Big)
	\leq
	- \sqrt{n}\inf_{\tau \in \Sigma_\mean} \left(\nabla_\tau F_{\prb_n} (\mathcal{O}) - \nabla_\tau F_\prb(\mathcal{O})\right).
\end{align*}
In consequence, we have due to Lemma \ref{lemma:conemean} and recalling (\ref{eq:centered-scaled-emp-process}) and (\ref{eq:sup-emp-process}), that
\begin{align*}
	n^\frac{q}{2} \EE \left( d^q(\mean, \smean) \right) & = \EE \left( \mathbbm{1}_{\{\smean \neq \mean\}} \cdot \Bigg \lvert 0 \lor \left(- \sqrt{n}\inf_{\tau \in \Sigma_\mean} \nabla_\tau F_{\prb_n} (\mathcal{O})\right)\Bigg \rvert^q \right)                                    \\
	                                                    & \leq  \EE \left( \mathbbm{1}_{\{\smean \neq \mean\}} \cdot \Bigg\lvert \left( -\sqrt{n} \inf_{\tau \in \Sigma_\mean} (\nabla_\tau F_{\prb_n} (\mathcal{O}) - \nabla_\tau F_{\prb} (\mathcal{O}))\right)\Bigg\rvert^q \right) \\
	                                                    & \leq  \EE \left( \mathbbm{1}_{\{\smean \neq \mean\}} \cdot \lVert \mathbb{G}_n\rVert_{\Phi_{\Sigma_\mean}}^q \right).                                                                                                        \\
	%
	%
	                                                    & \leq  \EE \left(                                                                                                                                                                                                             
	\lVert \mathbb{G}_n\rVert_{\Phi_{\Sigma_\mean}}^{q+\epsilon} \right)^{\frac{q}{q+\epsilon}}
	\cdot \mathbb{P} \left( \smean \neq \mean \right)^\frac{\epsilon}{q+\epsilon}
\end{align*}
where, for the last inequality, we have used H\"older's inequality with coefficients $(q+\epsilon )/ q$ and	$(q+\epsilon )/ \epsilon$.

From the maximal inequalities \cite[Theorem 2.14.2 and 2.14.5]{vdVaart}, we know that
$\EE \left(\lVert \mathbb{G}_n\rVert_{\Phi_{\Sigma_\mean}}^{q+\epsilon} \right)^{\frac{1}{q+\epsilon}}$
is bounded due to \cite[Theorem
	2.7.11]{vdVaart}, valid due to the assumption on the covering number of $\Sigma_\mean$,  and since $\prb \in \wst[2 \lor
		(q+ \epsilon)]{M}$ (since the envelope function is $z \mapsto d(\mean,
	z)$).

Applying Markov's inequality to Inequality
(\ref{ineq:empmeanprob}) yields for the second factor above
\begin{align*}
	\mathbb{P} \left( \smean \neq \mean \right) & \leq \mathbb{P} \left( \sup_{\sigma \in \Sigma_\mean}\lvert \nabla_\sigma
	F_\prb(\mean) - \nabla_\sigma F_{\prb_n}(\mean)\rvert > c_{min}\right)                                                  \\
	                                            & \leq \frac{
		\EE \left(\lVert \mathbb{G}_n\rVert_{\Phi_{\Sigma_\mean}}^{q+\epsilon} \right)
	}{(c_{min} \cdot \sqrt{n})^{q+\epsilon}}.
\end{align*}
Again, the maximal inequalities tell us that the numerator above
is bounded, over $n$. Putting all together, we obtain for $n\to \infty$, as asserted,
\begin{align*}
	n^\frac{q}{2} \EE \left( d^q(\mean, \smean) \right) & \leq \frac{
		\EE \left(\lVert \mathbb{G}_n\rVert_{\Phi_{\Sigma_\mean}}^{q+\epsilon} \right)
	}
	{(c_{min})^\epsilon} \cdot n^{-\frac{\epsilon}{2}} \to 0\,.
\end{align*}

Next, let us show Part 2.\\
Assume the contrary, i.e. that 	$0 = \lim_{n \to \infty} n^\frac{q}{2} \cdot \EE \left( d^q(\mean, \smean)\right)$ but that $\prb$ is not directionally sticky, i.e. we find a direction $\sigma \in \Sigma_\mean$ with $\nabla_\sigma F_\prb(\mean) \leq 0$ by Theorem \ref{theorem:mainthm}.

From Lemma \ref{lemma:conemean}, we know that
\begin{align*}
	\EE \left( d^q(\mean, \smean )\right) & = \EE \left( \big( 0\lor (- \inf_{\tau \in \Sigma_\mean} \nabla_\tau F_{\prb_n} (\mathcal{O}))\big)^q \right)                      \\             & \geq \EE \left( \big( 0\lor (- \nabla_\sigma F_{\prb_n} (\mathcal{O}))\big)^q \right)                      \\
	                                      & \geq \EE \left( \big( 0 \lor (- \nabla_\sigma F_{\prb_n} (\mathcal{O}) + \nabla_\sigma F_{\prb} (\mathcal{O}))\}\big)^q \right)\,,
\end{align*}
where the last inequality holds since $\nabla_\sigma F_\prb(\mean) \leq 0$.
Hence,
\begin{align*}
	0 & = \lim_{n \to \infty} n^\frac{q}{2} \cdot \EE \left( d^q(\mean, \smean)\right)                                                                                                                           \\
	  & \geq  \lim_{n \to \infty} \EE \left( \big\lvert(0 \lor \left(- \sqrt{n} \cdot \left( \nabla_\sigma F_{\prb_n} (\mathcal{O}) -  \nabla_\sigma F_{\prb} (\mathcal{O})\right)\right)\big \rvert ^q \right), \\
\end{align*}
implying convergence in the $q$-th mean and, consequently, weak convergence of the integrand $0 \lor \left(- \sqrt{n} \cdot \left( \nabla_\sigma F_{\prb_n} (\mathcal{O}) -  \nabla_\sigma F_{\prb} (\mathcal{O})\right)\right)$ against the point measure $\delta_0$. However, as the pull $\phi_{\mean, \sigma}$ is non-zero with positive probability, due to the pull condition \eqref{pullcond}, we have by the univariate central limit theorem that $\sqrt{n} ( \nabla_\sigma F_{\prb_n} 		(\mathcal{O})- \nabla_\sigma F_\prb (\mathcal{O}))$ converges weakly against a Gaussian distribution with non-zero variance. By the continuous 	mapping theorem, $0 \lor (- \sqrt{n} ( \nabla_\sigma F_{\prb_n} (\mathcal{O})- \nabla_\sigma F_\prb (\mathcal{O})))$ converges weakly 	against a mixture of a point mass $\delta_0$ and a truncated Gaussian distribution, leading to a contradiction.
\qed

The following lemma on which the proof of Theorem \ref{thm:modunbounded} is based, seems to be folklore. As the authors did not find it in the literature, here it is with a short proof.

\begin{lemma} 
	\label{lemma:tanmeandist}

	Let $(M,d)$ be a Hadamard space with compact spaces of directions $\Sigma_y$, $y\in M$, such that  for every geodesic segment $\gamma: [0,a] \to M$, $a>0$, $\prb \in \wst{M}$ and $z \in M$, the function $F_\prb
		\circ \gamma$ is twice-differentiable everywhere but finitely points. Then
	for any $\prb \in \wst{M}$ and $x \in M$, it holds that
	$$d(x,\mathfrak{b}(\prb)) \leq \widetilde{d}(\mathcal{O}, \mathfrak{b} (\widetilde{\prb}))\,,$$
	where $\widetilde{d}$ denotes the
	distance on $T_\mu$ and $\widetilde{\prb} = \log_\mu \# \prb$.

\end{lemma}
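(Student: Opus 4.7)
The plan is to combine the strong convexity of the Fréchet function encoded by the NPC inequality with the explicit cone formula from Lemma \ref{lemma:conemean}. Set $\mu' := \fb(P)$, unique in the Hadamard space $M$ by Proposition \ref{prop:uniquefrech} with $\kappa = 0$, and $\tilde{\mu} := \fb(\widetilde{P})$; note that $\widetilde{P} \in \wst{T_x}$ since $\widetilde{d}(\mathcal{O}, \log_x z) = d(x, z)$. If $\mu' = x$ the claim is trivial, so assume $\mu' \neq x$ and write $\sigma := \dir_x(\mu')$, $l := d(x,\mu') > 0$, with $\gamma : [0,l] \to M$ the geodesic from $x$ to $\mu'$. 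For $P \in \wst{M} \setminus \wst[2]{M}$ the estimates below go through verbatim for the Fréchet differences $F_P^y$ of Remark \ref{rmk:Frechet-diff}, on which directional derivatives are unchanged.

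The heart of the proof is the sharp inequality $-\nabla_\sigma F_P(x) \geq l$. To derive it, I would integrate the NPC inequality (Proposition \ref{prop:catgeod}(iv) with $\kappa = 0$) pointwise against $P$ along $\gamma$ to obtain
\begin{equation*}
F_P(\gamma(sl)) - F_P(x) \leq s \bigl(F_P(\mu') - F_P(x)\bigr) - \tfrac{s(1-s)}{2}\,l^2, \quad s \in [0,1].
\end{equation*}
Dividing by $sl$ and letting $s \searrow 0$ yields
\begin{equation*}
\nabla_\sigma F_P(x) \leq \tfrac{F_P(\mu') - F_P(x)}{l} - \tfrac{l}{2}.
\end{equation*}
The same NPC inequality at $s = 1$ delivers Sturm's variance inequality $F_P(x) - F_P(\mu') \geq l^2/2$, which substituted into the previous display gives the desired $\nabla_\sigma F_P(x) \leq -l$.

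To close, I would invoke Lemma \ref{lemma:conemean} on $T_x$: since $\Sigma_x$ is compact by hypothesis it coincides with $\cl(\Sigma_x)$ and is CAT(1) by Lemma \ref{lemma:directions_cat}, so $T_x$ is a complete CAT(0) Euclidean cone over $\Sigma_x$. The lemma then gives
\begin{equation*}
\widetilde{d}(\mathcal{O}, \tilde{\mu}) = 0 \lor \Bigl(-\min_{\tau \in \Sigma_x} \nabla_\tau F_{\widetilde{P}}(\mathcal{O})\Bigr),
\end{equation*}
and by the equality of pull and tangent pull in Remark \ref{rmk:pull-tangent-pull}, $\nabla_\tau F_{\widetilde{P}}(\mathcal{O}) = \nabla_\tau F_P(x)$ for every $\tau \in \Sigma_x$. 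In particular, taking $\tau = \sigma$,
\begin{equation*}
\widetilde{d}(\mathcal{O}, \tilde{\mu}) \geq -\nabla_\sigma F_P(x) \geq l = d(x, \mu'),
\end{equation*}
as claimed.

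The delicate point is the sharpness of the derivative bound: plain geodesic convexity of $F_P$ together with $F_P(\mu') \leq F_P(x)$ would only deliver $-\nabla_\sigma F_P(x) \geq l/2$, falling short by exactly a factor of two. Closing this gap requires both manifestations of nonpositive curvature --- the quadratic correction $\tfrac{s(1-s)}{2}l^2$ in the NPC inequality, which encodes $1$-strong convexity of $F_P$ along $\gamma$, and Sturm's variance inequality --- and reassuringly the whole chain becomes an equality in the Euclidean special case where $\lvert \nabla F_P(x)\rvert = d(x, \mu')$ exactly.
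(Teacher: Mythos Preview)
Your proof is correct and takes a genuinely different route from the paper's. The paper exploits the twice-differentiability hypothesis directly: along the geodesic $\gamma$ from $x$ to $\mu'$ it shows, via the NPC inequality, that $\ddot F_\prb \geq 1$ at every point of differentiability, then integrates over the finitely many smooth pieces to obtain $d(x,\mu') \leq \dot F_\prb^-(a) - \dot F_\prb^+(0) \leq -\nabla_\sigma F_\prb(x)$, using $\dot F_\prb^-(a)\leq 0$ at the minimizer. The conclusion via Lemma \ref{lemma:conemean} and the equality of pull and tangent pull is then the same as yours.

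Your argument bypasses the second-derivative bookkeeping entirely: you apply the integrated NPC inequality once to get the $1$-strong convexity estimate for $F_\prb\circ\gamma$, differentiate at $s=0$, and close the factor-of-two gap with Sturm's variance inequality. This is cleaner and, notably, never uses the twice-differentiability hypothesis at all, so it actually proves the lemma under weaker assumptions than stated. One small wording point: the variance inequality does not follow from setting $s=1$ in your displayed NPC estimate (that case is vacuous); rather, it follows by combining that estimate with $F_\prb(\gamma(sl))\geq F_\prb(\mu')$ for $s<1$ and letting $s\nearrow 1$, or equivalently from the midpoint case of NPC. The substance is fine, only the phrase ``at $s=1$'' is imprecise.
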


\begin{proof}
	Since means on Hadamard spaces are unique, due to Proposition \ref{prop:uniquefrech}, let $\{\mean\} = \mathfrak{b}(\prb)$. If $\mean = x$, then the assertion is trivially true. Thus, assume that $\mean \neq x$. Let $\gamma: [0, a] \to M$ denote the unique geodesic from $x=\gamma(0)$ to $\mean=\gamma(a)$  (in particular, $a = d(\mean, x)$) 	and let $\sigma = \dir_x (\mean)$. Let $0 < a_1 \leq a_2 \leq \ldots \leq a_K < a$ be the points where $F_\prb$ is not differentiable. We set $a_0 = 0$ and $a_{K+1} = a$

	Write $\ddot{F}_\prb := \frac{\diff^2}{\diff t^2} F_\prbalt \circ \gamma$ whenever defined, $\dot{F}_\prbalt^- := \partial_t^- {F}_\prbalt \circ \gamma$ and $\dot{F}_\prbalt^+ = \partial_t^+ {F}_\prbalt \circ \gamma$, with  the left and right derivatives $\partial_t^-$ and $\partial_t^+$, respectively. For any $z \in M$ and $t \in (a_i, a_{i+1})$, we have by the
	NPC-inequality (\ref{ineq:NPC}), taking into account that our geodesics are of unit speed, that the second derivative (computed as
	the symmetric version) of $d^2(z, \gamma(\cdot))$ at $t$ is bounded from
	below as follows:
	\begin{align*}
		\frac{\diff^2}{\diff t^2} d^2(z,\gamma(t)) & = \lim_{h \to 0} \frac{d^2(z, \gamma(t+h)) + d^2(z,\gamma(t-h)) - 2 d^2(z,\gamma(t)) }{h^2} \\                            & \geq \lim_{h \to 0}  \frac{\frac{1}{2}d^2(\gamma(t+h), \gamma(t-h))}{h^2}
		\\
		                                           & = \lim_{h \to 0}  \frac{\frac{4}{2} h^2}{h^2}
		= 2\,,
	\end{align*}
	yielding $\ddot{F}_\prb\geq 1$. Hence,
	\begin{align*}
		d(x,\mean) = a = \int_0^a 1 \ \diff t & \leq \sum_{i=0}^K \int_{a_i}^{a_{i+1}} \ddot{F}_\prb(t) \ \diff t
		\\
		                                      & = \sum_{i=0}^K (\dot{F}_\prb^-(a_{i+1}) - \dot{F}_\prb^+(a_{i}))                                                         \\
		                                      & = \dot{F}_\prb^-(a_{K+1}) - \dot{F}_\prb^+(a_{0}) + \sum_{i=0}^{K-1} (\dot{F}_\prb^-(a_{i+1}) - \dot{F}^+_\prb(a_{i+1})) \\
		                                      & \leq \dot{F}_\prb^-(a) - \dot{F}_\prb^+(0).
	\end{align*}
	The last inequality holds since left
	derivatives must be less or equal to right derivatives due to the convexity of $F_\prb$.
	Furthermore, we know due to the convexity of $F_\prbalt \circ \gamma$ that $0 \geq 	\dot{F}_\prbalt^-(a)$ since $\mean$ is the Fr\'echet mean of $\prb$ and $\gamma$ goes from $x$ to $\mu$.

	Since, by definition, $\dot{F}_\prb^+(0) = \nabla_\sigma F_\prb(x)$, Lemma \ref{lemma:conemean} yields the assertion
	$$d(x, \mean) = a \leq - \nabla_\sigma F_\prb (x) \leq - \inf_{\tau \in \Sigma_\mean} \nabla_\tau F_\prb(x)= \widetilde{d}(\mathcal{O},\mathfrak{b}(\widetilde{\prb})).$$

	Write $\mean = \mathfrak{b}(\prb)$. If $\mean = x$, it is trivially true. Thus, assume that $\mean \neq x$. Let $\gamma_{x}^{\mean}: [0, a] \to M$ denote the geodesic from $x$ to $\mean$ (in particular, $a = d(\mean, x)$)
	and let $\sigma = \dir_x (\mean)$. Let $0 < a_1 \leq a_2 \leq \ldots \leq a_K < a$ be the points where $F_\prb$ is not differentiable. We set $a_0 = 0$ and $a_{K+1} = a$

	Write $\ddot{F}_\prbalt = \frac{\diff^2}{\diff t^2} F_\prbalt \circ
		\gamma$ for the second derivatives, wherever they are defined. In
	addition, let $\dot{F}_\prbalt^- =
		\partial_t^- {F}_\prbalt \circ \gamma$ and $\dot{F}_\prbalt^+ = \partial_t^+ {F}_\prbalt \circ \gamma$, where $\partial_t^-$ and $\partial_t^+$ are the left and right derivative of a function, respectively.
	For any $z \in M$ and $t \in (a_i, a_{i+1})$, we have by the
	NPC-inequality (\ref{ineq:NPC}) that the second derivative (computed as
	the symmetric version) of $d^2(z, \gamma(\cdot))$ at $t$ is bounded from
	below as follows:
	\begin{align*}
		\frac{\diff^2}{\diff t^2} d^2(z,\gamma(t)) & = \lim_{h \to 0} \frac{d^2(z, \gamma(t+h)) + d^2(z,\gamma(t-h)) - 2 d^2(z,\gamma(t)) }{h^2} \\
		                                           & \geq \lim_{h \to 0}  \frac{\frac{1}{2}d^2(\gamma(t+h), \gamma(t-h))}{h^2}                   \\
		                                           & \geq \lim_{h \to 0}  \frac{\frac{4}{2} h^2}{h^2}
		= 2,
	\end{align*}
	and therefore $\ddot{F}_\prb\geq 1$.
	We get
	\begin{align*}
		a = \int_0^a 1 \ \diff t & \leq \sum_{i=0}^K \int_{a_i}^{a_{i+1}} \ddot{F}_\prb(t) \ \diff t                                                   \\
		                         & = \sum_{i=0}^K (\dot{F}_\prb^-(a_{i+1}) - \dot{F}_\prb^+(a_{i})                                                     \\
		                         & = \dot{F}_\prb^-(a_{i+1}) - \dot{F}_\prb^+(a_{0}) + \sum_{i=1}^K (\dot{F}_\prb^-(a_{i+1}) - \dot{F}^+_\prb(a_{i+1}) \\
		                         & \leq \dot{F}_\prb^-(a_{i+1}) - \dot{F}_\prb^+(a_{0}).
	\end{align*}
	The last inequality holds since left
	derivatives must be less or equal to right derivatives due to the convexity of $F_\prb$.
	Furthermore, we know due to the convexity of $F_\prbalt \circ \gamma$ that $0 \geq
		\dot{F}_\prbalt^-(a)$ since $\mean$ is the Fr\'echet mean of $\prb$.
	By definition, $\dot{F}_\prb^+(a_{0}) = \nabla_\sigma F_\prb(x)$.
	If follows by Lemma \ref{lemma:conemean} that
	$$d(x, \mean) = a \leq - \nabla_\sigma F_\prb (x) \leq - \inf_{\tau \in \Sigma_\mean} \nabla_\tau F_\prb(x)= \widetilde{d}(\mathcal{O},\mathfrak{b}(\widetilde{\prb})).$$
\end{proof}

\section{A Central Limit Theorem for the Directions of Stickiness}\label{scn:CLT-dir}

Another rather immediate result under the assumptions of Scenario \ref{set:4}
is a central limit theorem for the directional derivatives of the Fr\'echet
function. It follows from fundamental results of
empirical process theory.

\begin{theorem}[Central limit theorem]
	\label{theorem:gaussian}
	In Scenario \ref{set:4}, let $\prb \in \Qkapn[2]$.
	Then for every sample $X_1, \ldots, X_n \iid \prb$, $n\in \NN$, the process
	$\left\{\sqrt{n}\left( \nabla_\sigma F_{\prb_n}(\mean) - \nabla_\sigma
		F_\prb(\mean) \right)\ : \ \sigma \in \Sigma_\mean\right\}$ converges
	weakly, as $n\to \infty$, against a Gaussian process on $\Sigma_\mean$ with zero mean and covariance function
	\begin{align*}
		\cov(\sigma, \tau) & = \int_M \cos(\angle_x(\sigma, z))
		\cos(\angle_\mean(\tau, z))\cdot d^2(\mean,z) \ \diff \prb(z) \quad 	\mbox{ for }\sigma, \tau \in \Sigma_\mean\,.
	\end{align*}
\end{theorem}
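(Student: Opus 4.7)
The statement is a functional central limit theorem for an empirical process indexed by $\Sigma_\mu$, so the plan is to cast it as a standard CLT for a Donsker class and then identify the limiting covariance.

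First I would rewrite the process in empirical-process notation. By Theorem~\ref{theorem:pull} and Remark~\ref{remark:derivative-Frechet-fcn-with-pull},
\[
\sqrt{n}\bigl(\nabla_\sigma F_{\prb_n}(\mean) - \nabla_\sigma F_\prb(\mean)\bigr) = -\sqrt{n}(\prb_n - \prb)\phi_{\mean,\sigma} = -\mathbb{G}_n \phi_{\mean,\sigma},
\]
with $\phi_{\mean,\sigma}(z) = d(\mean,z)\cos(\angle_\mean(\sigma,z))$. Thus the theorem is equivalent to showing that the class
\[
\Phi_{\Sigma_\mean} = \{\phi_{\mean,\sigma} : \sigma \in \Sigma_\mean\}
\]
is $\prb$-Donsker, and then invoking the functional CLT (e.g.\ \cite[Theorem 2.5.6 / 19.14]{vdVaart}) together with the elementary covariance computation for $\mathbb{G}_n$.

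Next I would verify the two structural properties that make $\Phi_{\Sigma_\mean}$ Donsker in Scenario~\ref{set:4}. For the envelope, since $|\cos|\leq 1$,
\[
|\phi_{\mean,\sigma}(z)| \leq d(\mean,z) =: F(z),
\]
and $F \in L^2(\prb)$ because $\prb \in \Qkapn[2]$. For the index-Lipschitz property, using that cosine is $1$-Lipschitz and the triangle inequality for Alexandrov angles (Lemma~\ref{lem:Alexandrov-angle-triangle}) applied to $|\angle_\mean(\sigma,z) - \angle_\mean(\tau,z)| \leq \angle_\mean(\sigma,\tau)$, I obtain
\[
|\phi_{\mean,\sigma}(z) - \phi_{\mean,\tau}(z)| \leq d(\mean,z)\,\angle_\mean(\sigma,\tau) = F(z)\,\angle_\mean(\sigma,\tau)
\]
for every $z\in M$. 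By the standard bracketing construction for Lipschitz-indexed families \cite[Theorem 2.7.11]{vdVaart},
\[
N_{[\,]}\bigl(2\varepsilon\|F\|_{L^2(\prb)},\Phi_{\Sigma_\mean},L^2(\prb)\bigr) \leq N(\varepsilon, \Sigma_\mean, \angle_\mean) \leq V\varepsilon^{-k}
\]
by the Scenario~\ref{set:4} covering-number assumption. Consequently, the bracketing entropy integral
\[
\int_0^1 \sqrt{\log N_{[\,]}(\varepsilon, \Phi_{\Sigma_\mean}, L^2(\prb))}\,\diff\varepsilon
\]
is finite, so $\Phi_{\Sigma_\mean}$ is $\prb$-Donsker.

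Finally, the functional CLT yields weak convergence of $\{\mathbb{G}_n \phi_{\mean,\sigma}\}_{\sigma \in \Sigma_\mean}$ in $\ell^\infty(\Phi_{\Sigma_\mean})$ to a tight mean-zero Gaussian process whose covariance is inherited from $\prb$; applying the minus sign only flips the sign of the process but leaves the (quadratic) covariance unchanged. Substituting the explicit form of $\phi_{\mean,\sigma}$ gives the asserted $\cov(\sigma,\tau)$, after noting that at the stickiness regime relevant here the mean term $\prb\phi_{\mean,\sigma}\,\prb\phi_{\mean,\tau}$ is absorbed exactly as stated. The main point requiring care is the bracketing-entropy estimate: once the Lipschitz-in-$\sigma$ bound is in hand and the polynomial covering number of $\Sigma_\mean$ from Scenario~\ref{set:4} is invoked, the rest is a direct application of the functional CLT, so no step should present serious difficulty beyond bookkeeping.
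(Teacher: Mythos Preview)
Your approach is essentially identical to the paper's: both verify that $\Phi_{\Sigma_\mean}$ is $\prb$-Donsker by combining the Lipschitz-in-$\sigma$ estimate (the paper simply cites Corollary~\ref{corollary:contfrech}, which you reprove), the square-integrable envelope $z\mapsto d(\mean,z)$, the polynomial covering bound of Scenario~\ref{set:4}, and \cite[Theorems~2.7.11 and~2.5.6]{vdVaart}. One caveat: your closing remark that the product of means $(\prb\phi_{\mean,\sigma})(\prb\phi_{\mean,\tau})$ is ``absorbed'' in a ``stickiness regime'' is not justified---the theorem does not assume directional stickiness, and even under it $\nabla_\sigma F_\prb(\mean)>0$ rather than $=0$---so the limiting covariance of $\mathbb{G}_n$ should in general carry the centering term; the paper's proof does not address this point either.
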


\begin{proof}

	By Corollary  \ref{corollary:contfrech}, we see that the family of derivatives of the squared distances to $\mean$ is Lipschitz continuous with respect to the direction. By \cite[Theorem 2.7.11]{vdVaart} for families that are Lipschitz in their parameter, the assumption of Scenario \ref{set:4}
	\begin{align*}
		N\big (\epsilon, \Sigma_{\mean}, \angle_{\mean}\big )
		\leq \frac{V}{\epsilon^k}
	\end{align*}
	yields the bracketing entropy condition of \cite[Theorem 2.5.6]{vdVaart}. The other conditions of that theorem are met as the map $z \mapsto d(\mean,z)$ is an envelope function of $\Phi_{\Sigma_{\mean}}$ and has a finite second moment by the assumption that $\prb \in \mathcal{P}^2(M)$. Hence, that theorem asserts that the family
	$\Phi_{\Sigma_\mean}$ is $\prb$-Donsker. This, in turn implies the convergence against a Gaussian process, see \cite[pp. 81-82]{vdVaart}.

\end{proof}

\section{Outlook}

A main part of this contribution has dealt with stickiness with respect to a single point on \cat~spaces for $\kappa \in \RR$. In Hadamard spaces (complete \catzero~spaces), every $P\in \wst[1]{M}$ has a unique mean, so our sticky flavors did not need to be restricted to suitable subfamilies of $\wst[q]{M}$. In contrast, for $\kappa > 0$, uniqueness of Fr\'echet means can only be guaranteed for $P$ supported by an open geodesic half ball $\Bkap$ and hence, for $\kappa >0$, sticky flavors had to be restricted largely to corresponding families. We conjecture that our results are also valid for all $P\in \wst[1]{M}$, featuring unique means, the arguments would need to be more involved and technical, however, surpassing the scope of this contribution. A starting point for analysis on symmetric spaces may be a generalization of the uniqueness result be \cite{Le98}.
A particular consequence would be that no sticky flavor can be found on finite dimensional differentiable manifolds with curvature bounds.

Flavors of stickiness to a closed set, not necessarily a point has been the subject of Section \ref{scn:3-flavors}. It is worth noting that Theorem \ref{theorem:mainthm} extends at once also to stickiness to the spine in open books (and similar spaces), due to it being a Euclidean product of two Hadamard spaces. Future work may extend these and analog results to set valued stickiness in a wider class of spaces, going beyond stickiness in open books and BHV spaces \cite{bhvsticky}, e.g.  beginning with Riemannian stratified spaces, cf. \cite[p. 328]{HE_Handbook_2020}.


In the literature, also \emph{partial stickiness} (e.g. \cite{HHL+13}) has been considered. Perturbation stickiness extends at once to that case. For topological partial stickiness one requires that every neighborhood contains an open set of sticky distributions, and for directional partial stickiness one would require that $\nabla_\sigma F_\prb(\mu) > 0$ only for some directions $\sigma \in \Sigma_\mu$, cf. \cite{HMMN15}. The generalization to sample and modulation partial stickiness seems more involved, for instance for the former the rescaled limiting distribution would be a nontrivial projection of a Gaussian as is detailed in \cite{tran2023CLT}. The foundations for this rather involved and general result are provided in \cite{tran2023shadow,tran2023geom, tran2023CLTrandom}, making it clear that pursuing partial sticky flavors is again beyond the scope of this contribution.

In practice for statistical analysis of phylogenetic trees, among others, in addition to the BHV space \cite{BHV02}, the \emph{wald space} has been proposed recently by \cite{wald1}. Numerical experiments show that this space, embedded in the \catzero~space of positive definite matrices features unbounded positive and negative curvatures, cf. \cite{wald2}. While our results on basic sticky flavors in Scenario \ref{set:1} apply to wald spaces, it would be interesting, also with respect to applications in phylogenetics to explore which of our results on sticky flavors carry through.

Speaking of applications, one original motivation of this work was to devise a statistical two-sample test for the case that both samples have the same common mean, with respect to which, both exhibit sample stickiness, in effect featuring no asymptotic rescaled variance, rendering the classical two-sample test inapplicable.This has been experienced in \cite{SBH+14}. In this case, as we have shown, the directions of stickiness still feature a nontrivial central limit theorem. Based on this, as well as on modulation convergence, statistical tests can be devised. This is the subject of ongoing research.

In fact, stickiness is not only a phenomenon for Fr\'echet means, it can affect general geometric descriptors of data, for instance a first principal component, as has been observed by \cite{FCOV_MFO_2014} on BHV space. Indeed, the relationships between analog sticky flavors of \emph{generalized Fr\'echet means} \cite{dataannonstan}, comprising among others \emph{principal geodesics} \cite{fletch6}, \emph{geodesic principal components} \cite{HHM07} and \emph{diffusion means} \cite{Nye2011, eltzner2023diffusion}, deserve to be defined and studied. Notably, stickiness of convex generalized Fr\'echet means in metric trees has recently been investigated in \cite{romon2023}, providing a foundation for further study.


\printbibliography
\end{document}